\documentclass[a4paper, 11pt]{amsart} 
\usepackage[
  margin=1.9cm,
  includefoot,
  footskip=30pt,
]{geometry}
\usepackage{amsmath,amssymb,amscd}
\usepackage{amsfonts}
\usepackage[english]{babel}
\usepackage[leqno]{amsmath}
\usepackage{amssymb,amsthm}
\usepackage{mathrsfs} 
\usepackage{amscd}
\usepackage{enumerate}
\usepackage{epsfig}
\usepackage{relsize}
\usepackage{layout}
\usepackage[usenames,dvipsnames]{xcolor}
\usepackage[backref=page]{hyperref}
\usepackage{tikz}
\hypersetup{
 colorlinks,
 citecolor=Green,
 linkcolor=Red,
 urlcolor=Blue}
\usepackage[matrix,arrow,tips,curve]{xy}
\input{xy}
\xyoption{all}
\definecolor{darkgreen}{rgb}{0.0, 0.7, 0.0}
\definecolor{purple}{rgb}{0.5, 0.0, 0.5}
\definecolor{red}{rgb}{0.8, 0.2, 0.0}

\newtheorem{thm}{Theorem}[section]
\newtheorem{bthm}{Theorem}
\newtheorem{bcor}{Corollary}
\newtheorem{lemma}[thm]{Lemma}

\newtheorem{prop}[thm]{Proposition}

\newtheorem{claim}[thm]{Claim}

\numberwithin{equation}{section}
\theoremstyle{definition}
\newtheorem{defi}[thm]{Definition}

\newtheorem{notation}[thm]{Notation}

\theoremstyle{remark}
\newtheorem{remark}[thm]{Remark}

\newtheorem{example}[thm]{Example}
\newcommand{\Z}{\mathbb{Z}}
\newcommand{\C}{\mathbb{C}}

\newcommand{\R}{\mathbb{R}}

\newcommand{\Pic}{\operatorname{Pic}}

\def \Im{{\rm Im}}

\def \P{\mathbb{P}}

\def \F{\mathcal F}
\def \N {\mathcal N}

\def\I{{\mathcal J}}
\def \L{\mathcal L}
\def \E{\mathcal E}
\def \G{\mathcal G}
\def \H{\mathcal H}

\def\O{\mathcal O}

\def\M0{\mathcal M^0}

\def\mapright#1{\smash{\mathop{\longrightarrow}\limits^{#1}}}

\DeclareMathOperator{\Sing}{{Sing}}

\DeclareMathOperator{\Proj}{{Proj}}
\DeclareMathOperator{\Sym}{{Sym}}

\def\B{\mathbf{B}}

\newcommand{\rk}{\operatorname{rank}}

\begin{document}

\title[Ulrich bundles with $c_2(\E)^2=0$ and connectedness of Ulrich subvarieties]{Ulrich bundles with $c_2(\E)^2=0$ and connectedness of Ulrich subvarieties}

\author[V. Buttinelli, A.F. Lopez, R. Vacca]{Valerio Buttinelli*, Angelo Felice Lopez and Roberto Vacca**}

\address{\hskip -.43cm Valerio Buttinelli, Dipartimento di Matematica ``Guido Castelnuovo", Sapienza Universit\`a di Roma, Piazzale Aldo Moro 5, 00185 Roma, Italy. email: {\tt valerio.buttinelli@uniroma1.it}}

\address{\hskip -.43cm Angelo Felice Lopez, Dipartimento di Matematica e Fisica, Universit\`a di Roma
Tre, Largo San Leonardo Murialdo 1, 00146, Roma, Italy. e-mail {\tt angelo.lopez@uniroma3.it}}

\address{\hskip -.43cm Roberto Vacca, Dipartimento di Matematica, Universit\`a di Roma Tor Vergata, Via della Ricerca Scientifica, 00133 Roma, Italy. email: {\tt vacca@mat.uniroma2.it}}

\thanks{*The author thanks the ``Progetti di Avvio alla Ricerca 2024" of the University of Rome La Sapienza}

\thanks{The second and third authors were partially supported by the GNSAGA group of INdAM} 

\thanks{**Work supported by the MIUR Excellence Department Project MatMod@TOV awarded to the Department of Mathematics of the University of Rome Tor Vergata.}

\thanks{{\it Mathematics Subject Classification} : Primary 14J60. Secondary 14F06, 14J35}

\begin{abstract} 
We give an almost complete classification of Ulrich bundles $\E$ with $c_2(\E)^2=0$ on a variety $X$ of dimension $n \ge 4$. Moreover, we show that there are strong constraints on the geometry of $X$ and we study disconnected Ulrich subvarieties.
\end{abstract}

\maketitle

\section{Introduction}


Let $X \subset \P^N$ be a smooth irreducible variety of dimension $n$ and let $\E$ be an Ulrich bundle on $X$, that is such that $H^i(\E(-p))=0$ for $1 \le p \le n$. It is nowadays well-known that, in the presence of an Ulrich bundle, several geometric consequences can be deduced about $X$, see for example \cite{be, es, cmrpl, ls}. A classical way to study vector bundles is through their degeneracy loci, see for example \cite[\S 14]{fu}. In the spirit of the Hartshorne-Serre correspondence, in  \cite[Thm.~3.1]{cfk}, \cite[Thm.~1]{lr1}, \cite[\S 2.2]{v} was considered a family of subvarieties, called Ulrich subvarieties, having several special properties and whose existence is equivalent to have an Ulrich bundle. As Ulrich subvarieties are codimension two degeneracy loci of an Ulrich bundle $\E$, hence their class in the Chow group of $X$ represents $c_2(\E)$, it is not a surprise that their geometrical properties are related to vanishing of suitable Chern polynomials. For example, as seen in \cite[Cor.~1]{blv}, their connectedness is controlled by the non-vanishing of $c_3(\E)$.

The leitmotif of our work is to show that non-connectedness of Ulrich subvarieties poses very severe restrictions on $X$, to the point of reaching classification results. As it will turn out, an important family of Ulrich bundles $\E$ to be studied is the one with $c_2(\E) \ne 0$ and $c_2(\E)^2=0$. In the latter case, Ulrich subvarieties move on a smooth divisor on $X$ (as shown in \cite[Lemma 4.5]{blv}) and very often cover $X$, see Lemma \ref{mov}. Moreover, we will be able to give several geometrical information about them. 

In order to study Ulrich bundles $\E$ on $X$ with $c_2(\E)^2=0$, the condition being obvious for $n \le 3$, the first case to be considered is when $n \ge 4$ and $c_1(\E)^4=0$. The latter, in fact, turns out to be stronger than $c_2(\E)^2=0$, as shown in the following classification. We will use the notion of linear Ulrich triple, see Definition \ref{not4}.

\begin{bthm} \hskip 3cm
\label{c1^4=0}

Let $X \subset \P^N$ be a smooth irreducible variety of dimension $n \ge 4$ and let $\E$ be a rank $r$ bundle on $X$. If $n \ge 5$, then $\E$ is Ulrich with $c_1(\E)^4=0$ if and only if $(X,\O_X(1),\E)$ is one of the following:
\begin{itemize}
\item [(i)] $(\P^n,\O_{\P^n}(1),\O_{\P^n}^{\oplus r})$.
\item [(ii)] A linear Ulrich triple over a variety of dimension $b$ with $1 \le b \le 3$ and $c_2(\E)^2=0$.
\end{itemize}
Now assume that $n=4$ and $\E$ is Ulrich with $c_1(\E)^4=0$. Then $c_2(\E)^2=0$ and:
\begin{itemize}
\item[(iii)] If $c_2(\E)=0$ and $r \ge 2$, then $(X,\O_X(1),\E)$ is either $(\P^4,\O_{\P^4}(1),\O_{\P^4}^{\oplus r})$ or a linear Ulrich triple over a curve.
\item[(iv)] If $c_2(\E)\ne0$, then $(X,\O_X(1),\E)$ is one of the cases (ii1) and (iii)-(x) of \cite[Thm.~1]{lms1}.
\end{itemize}
\end{bthm}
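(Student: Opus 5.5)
The plan rests on the fact that for an Ulrich bundle $\E$ the determinant $\det \E=\O_X(c_1(\E))$ is globally generated. The numerical dimension of $c_1(\E)$ therefore controls the geometry through the morphism $\Phi$ defined by $|\det \E|$. The condition $c_1(\E)^4=0$ says that $\nu(\det \E)\le 3$, so $\Phi$ has image of dimension $b\le 3$. There are two cases.

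First, suppose $c_1(\E)=0$. Then $\E$ is trivial by the standard characterization of Ulrich bundles with trivial determinant. Ulrichness of $\O_X$ forces $(X,\O_X(1))=(\P^n,\O_{\P^n}(1))$, which gives case (i), and in case (iii) the $\P^4$ alternative.

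Second, suppose $c_1(\E)\ne 0$, so $1\le b\le 3<n$. Here I invoke the known result of Lopez–Sierra type: when $\det\E$ is not big, the Ulrich bundle is trivial on the fibres of $\Phi$. For each fibre $F$ we get $\E_{|F}\cong\O_F^{\oplus r}$, and $F$ is a linear space embedded linearly. This is proved by restricting the Ulrich cohomology vanishings to a general fibre, using that $\E_{|F}$ is Ulrich with trivial determinant, so $F\cong\P^{n-b}$ with $\O_F(1)$ of degree one. The goal is to show that $\Phi$ (after Stein factorization) is a $\P^{n-b}$-bundle with $\E=\Phi^*\G(1)$-type structure, i.e.\ exactly a linear Ulrich triple in the sense of Definition \ref{not4}, over a base of dimension $b\le 3$. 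Conversely, a linear Ulrich triple over a base of dimension $b\le 3$ has $c_1(\E)$ pulled back from the base up to the twist. From the relation between $c_1(\E)$ and the pullback classes I expect $c_1(\E)^4=0$ to hold iff $b\le 3$ and $c_2(\E)^2=0$ to hold automatically in that case, or at least under the conditions listed. Hence the "if and only if" for $n\ge 5$.

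The vanishing $c_2(\E)^2=0$ in all cases should come from $c_2(\E)$ being a combination of classes of the form $\Phi^*\alpha$ and $\Phi^*\beta\cdot H$. A product of two such classes lands in $\Phi^*$ of degree $\ge 4$ on the base, which vanishes, except for terms involving $H^2$ with the relative tautological relation. I would compute this using the Chern classes of $\E=p^*(\G)\otimes\O(1)$-type on the projective bundle. The main obstacle is $n=4$, where fibres may be of dimension $n-b$ with $b=3$ or the structure may be degenerate (non-equidimensional fibres, quadric fibrations, scrolls over surfaces not of the right type). This is why, for $c_2\ne 0$, one has to appeal to the classification of \cite[Thm.~1]{lms1} of Ulrich bundles with non-big determinant on fourfolds. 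For $n=4$ I would therefore verify $c_2(\E)^2=0$ case by case on that list. I would then show that $c_2(\E)=0$, $r\ge2$ forces the base to be a curve: a surface or threefold base would give a nonzero $c_2$ via the Segre class of $\G$, by Riemann–Roch or Ulrich positivity of $c_2$ on the fibres. This last rigidity step is where I expect the real work.
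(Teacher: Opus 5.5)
Your reduction is sound up to this point: $\det\E$ is globally generated with $\nu(\det\E)\le 3$, and the fibres of $\Phi$ are linear spaces on which $\E$ is trivial. The linearity is \cite[Lemma 2.10]{lms1}; it does not come from restricting Ulrich vanishings to a fibre, and that argument would only control general fibres anyway. The gap is that everything the theorem asserts for $n\ge 5$ sits in the sentence you call ``the goal'', namely that $\Phi$ is a $\P^{n-b}$-bundle, and you give no argument for it.

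Linear fibres of general dimension $n-b$ do not suffice. You need \emph{all} fibres to have dimension $n-b$; then \cite[Lemmas 2.10 and 2.12]{lms1} give the linear Ulrich triple. Jumping fibres really occur: Example \ref{esbs} is a fourfold with $c_1(\E)^4=0$ whose $\Phi$ has general fibre a line and $L^3$ fibres that are planes, and it is not a linear $\P^{4-b}$-bundle.

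For $n\ge 6$, and for $n=5$ with $c_1(\E)^3=0$, equidimensionality is supplied by the argument of \cite[Cor.~4]{ls}, which needs $\nu(\det\E)\le\lfloor n/2\rfloor$. For $n=5$ with $\nu(\det\E)=3$ this bound fails. General fibres are planes, and a priori some fibre is a $\P^3$ or a $\P^4$. This is the heart of the paper's proof:
\begin{itemize}
\item The plane $F_x$ through a general point gives $\dim F_1(X,x)\ge 1$, so Lemma \ref{aggiu} lists the possible $(X,\O_X(1))$.
\item Cases (b.2) and (d.1) are shown to be linear Ulrich triples, via \cite[Lemma 5.2]{lo} and \cite[Cor.~2]{lms1} respectively.
\item The remaining cases are excluded: by $\rho(X)\ge 2$; by spinor bundles for (c.2); by \cite[Prop.~2.18]{lms1} for (e); and by the Dichotomy Lemma \cite[Lemma 2.14]{lms1}, applied to the bundle map or the nef value morphism, for (c.3), (d.3), (d.4).
\item For (d.2), Fujita's description \cite{f2,f3} of degree $6$ Del Pezzo fourfolds rules out a singular fibre containing the jumping $\P^3$.
\end{itemize}
Nothing in your proposal substitutes for this.

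You have also misplaced the difficulty. For $n=4$ nothing about $\Phi$ has to be proved. Part (iv) is \cite[Thm.~1]{lms1}, which applies because $\E$ is not big by \cite[Rmk.~2.2]{lm}. Part (iii) is \cite[Thm.~2]{blv}, which classifies Ulrich bundles with $c_2(\E)=0$ without presupposing any bundle structure. Your plan for (iii) starts from a linear triple over some base, which you have not produced when $n=4$.

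Likewise $c_2(\E)^2=0$ needs no case-by-case check. For globally generated bundles, positivity of Schur polynomials \cite[Cor.~2.7]{dps} shows that $c_1(\E)^4=0$ kills every degree-four Chern monomial (Lemma \ref{c3no}(i)).

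Finally, in a linear Ulrich triple $\E=\pi^*(\G(\det\F))$ is pulled back from $B$ with no twist by $\O_X(1)$. So $c_1(\E)^4=0$ and $c_2(\E)^2=0$ are immediate, as pullbacks of codimension-$4$ classes from a base of dimension at most $3$. Your computation with terms $\Phi^*\beta\cdot H$ and the tautological relation rests on a wrong formula; with such terms $c_1(\E)^4$ would not vanish.
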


For examples of linear Ulrich triples as in the above theorem, see \cite{lo, lms1} and Example \ref{secondo}. An example of (iv) in Theorem \ref{c1^4=0}, such that $(X,\O_X(1))$ is not a linear $\P^k$-bundle, is given in \ref{esbs}.

Theorem \ref{c1^4=0} is also of independent interest, as it contributes to the study of non-big Ulrich bundles \cite{lm, lms1, lms2}, because when $n \ge 4$ and $c_1(\E)^4=0$, then $\E$ is not big by \cite[Rmk.~2.2]{lm}. 

Next, to consider the general case in which $c_2(\E)^2=0$. Since the case $c_2(\E)=0$ has been already analyzed in \cite[Thm.~2]{blv}, we suppose that $c_2(\E) \ne 0$. As we will see below, Ulrich bundles with $c_2(\E)^2=0, c_2(\E) \ne 0$ do not occur if $\rho(X)=1$ and give several restrictions on the geometry of $X$. When $c_3(\E) \ne 0$, we can actually classify them as follows.

\begin{bcor} \hskip 3cm 
\label{c22=0}

Let $X \subset \P^N$ be a smooth irreducible variety of dimension $n \ge 4$ and let $\E$ be a rank $r$ Ulrich bundle on $X$. Assume that $c_2(\E) \ne 0$ and $c_2(\E)^2=0$. Then $\rho(X) \ge 2$ and one of the following occurs:
\begin{itemize}
\item [(i)] $\det \E$ is not $(n-4)$-ample, $\E$ is not $(n-4)$-ample and $X$ contains a linear space $M \subset \P^N$ with $\dim M=n-3$ such that $\E_{|M}$ is trivial.
\item [(ii)] $\det \E$ is $(n-4)$-ample, $c_i(\E)=0$ for $i \ge 3$ and $\E$ is not $(n-4)$-ample if $r \ge 3$. Moreover, $X$ has a morphism with connected fibers onto a smooth curve, and a general Ulrich subvariety associated to $\E$ is a complete intersection of two divisors on $X$.
\end{itemize}
Also, if $c_3(\E) \ne 0$, we have:
\begin{itemize}
\item [(i1)] If $n \ge 5$, $(X,\O_X(1),\E)$ is a linear Ulrich triple over a smooth threefold.
\item [(i2)] If $n= 4$, $(X,\O_X(1),\E)$ is as in one of the cases (ii1), (iv), (v1), (v3) and (vi)-(x) of \cite[Thm.~1]{lms1} with $r \ge 3$.
\end{itemize}
\end{bcor}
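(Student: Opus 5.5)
We plan to follow the dichotomy in the statement, splitting according to whether $\det \E$ is $(n-4)$-ample. Throughout we use the standard facts on Ulrich bundles: $\E$ is globally generated and $\O_X(1)$-semistable with $c_1(\E)=\frac{r}{2}(K_X+(n+1)H)$. We also use that a general Ulrich subvariety $Y$ is the degeneracy locus of $r-1$ general sections, so $[Y]=c_2(\E)$, and by \cite[Lemma 4.5]{blv} $Y$ moves inside a smooth divisor when $c_2(\E)^2=0$.

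\emph{Step 1: $\rho(X)\ge 2$.} If $\rho(X)=1$, then every nonzero effective cycle class is a positive multiple of a power of $H$ in numerical terms, up to the numerical classes available. Here $c_2(\E)$ is represented by the effective nonzero codimension-two cycle $Y$ and $H^{n-4}$ is ample. Since $c_1(\E)$ is a positive multiple of $H$, it is ample, and the Fulton--Lazarsfeld positivity of Chern classes of globally generated bundles with ample determinant should give $c_2(\E)^2\cdot H^{n-4}>0$ whenever $c_2(\E)\neq 0$. More simply, we would intersect $Y$ with a second general Ulrich subvariety $Y'$. As $\rho(X)=1$, $Y'\cap Y$ is nonempty once $Y$ meets a general divisor in the span of $c_1$, and $c_2^2=0$ forces $Y\cap Y'=\emptyset$. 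The argument then proceeds by checking via \cite[Lemma 4.5]{blv} that the containing divisor $D$ satisfies $D|_D$ trivial in the relevant sense, which is impossible when $D$ is ample. This gives $\rho\ge2$.

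\emph{Step 2: case (i).} Suppose $\det\E$ is not $(n-4)$-ample, meaning that $\phi_{\det\E}$ has a fiber of dimension $\ge n-3$. Since $\E$ is globally generated with $\det\E$ trivial exactly on curves where $\E$ is trivial, $\E$ is trivial on that fiber $F$. By the Ulrich property (as in \cite{lms1,lm}), such fibers are linear spaces in $\P^N$, so we obtain $M\subset F$ linear of dimension $n-3$ with $\E_{|M}$ trivial. Non-$(n-4)$-ampleness of $\E$ follows at once.

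\emph{Step 3: case (ii).} If $\det\E$ is $(n-4)$-ample, then $c_2(\E)^2=0$ together with Fulton--Lazarsfeld type positivity for $(n-4)$-ample globally generated bundles (Chern monomials of degree $\le n-k$ behave positively) should force $c_i(\E)=0$ for $i\ge3$. Indeed $c_2^2\ge$ combinations involving $c_1c_3$, $c_4$, and their vanishing propagates upward. If moreover $\E$ were $(n-4)$-ample with $r\ge3$, then $c_3(\E)\cdot H^{n-3}>0$, a contradiction. For the fibration, the divisor $D$ containing the moving $Y$ from \cite[Lemma 4.5]{blv} satisfies $D^2=0$ numerically and $|D|$ is base-point-free. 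Stein factorization then gives a morphism onto a smooth curve, and $Y$ is a complete intersection of $D$ with another divisor, namely the degeneracy locus of a section of $\det$ restricted to $D$. We expect the main obstacle to be exactly here: producing the pencil from $c_2^2=0$ and showing $Y=D\cap D'$.

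\emph{Step 4: the case $c_3(\E)\ne0$.} By Step 3, case (ii) is excluded, so we are in (i). Then \cite[Cor.~1]{blv} and Theorem \ref{c1^4=0} apply: since $\det\E$ is not $(n-4)$-ample we have $c_1(\E)^4=0$ after restricting to the relevant fibers. For $n\ge5$, Theorem \ref{c1^4=0}(ii) gives a linear Ulrich triple over a base of dimension $b\le3$, and $c_3\ne0$ forces $b=3$. For $n=4$, Theorem \ref{c1^4=0}(iv) combined with $c_3\ne0$ (hence $r\ge3$) leaves the listed cases of \cite[Thm.~1]{lms1}.
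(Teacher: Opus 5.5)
Your Step 2 is fine: it is essentially Lemma \ref{q}. Your Tu-type argument that $\E$ is not $(n-4)$-ample when $r\ge 3$ and $c_3(\E)=0$ is also fine. But the heart of case (ii) is missing, because you misread \cite[Lemma 4.5]{blv}.

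\textbf{Case (ii).} The divisor that lemma provides, containing a general Ulrich subvariety $Z$, is $Y\in|\det\E|$. This $Y$ is $(n-4)$-ample and certainly not of self-intersection zero. What has self-intersection zero is $Z$ \emph{inside} $Y$: $\O_Y(Z)$ is globally generated with $Z^2=0$ on $Y$, which only gives a morphism $Y\to C'$ onto a curve. The missing idea is to extend this morphism to $X$. This is where $(n-4)$-ampleness of $\det\E$ is really used, via the extension theorem \cite[Thm.~5.2.1]{bs} for the smooth $(n-4)$-ample divisor $Y$. Smoothness of $Y$ and $Z$ comes from Porteous: their possible singular loci have classes $c_2^2-c_1c_3$ and $c_3^2-c_2c_4$, both zero. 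Stein factorization then gives $p:X\to C$ with connected fibers and $Z=p^*\Delta\cap Y$.

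Likewise, $c_3(\E)=0$ does not follow from Schur inequalities ``propagating upward''. Positivity of $s_{(2,2)}=c_2^2-c_1c_3$ for nef bundles \cite{dps} only gives the codimension-four statement $c_1(\E)c_3(\E)=0$. To kill $c_3$ you need the geometric step of Lemma \ref{c3no}(ii). A nonzero $c_3(\E)$ is represented on $X_4$ by curves through a general point on which $\det\E$ has degree $0$. These curves are contracted by $\varphi_{\det\E}$, forcing $c_1(\E)^4=0$, which contradicts $c_1(\E)^4H^{n-4}>0$. Only then does \cite[Lemma 4.1]{blv} give $c_i(\E)=0$ for $i\ge 3$.

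\textbf{Step 1.} This does not work as written. Fulton--Lazarsfeld gives strict positivity only for ample bundles, so for a merely globally generated $\E$ you get at most $c_2(\E)^2H^{n-4}\ge 0$. Your second sketch rests on the same misreading of \cite[Lemma 4.5]{blv}: when $\rho(X)=1$ the divisor $Y\in|\det\E|$ is ample, and that is no contradiction. The correct argument falls out of the dichotomy. A nontrivial globally generated $\det\E$ on a variety with $\rho(X)=1$ is ample, so case (i) is impossible. In case (ii), the morphism onto a curve forces $\rho(X)\ge 2$.

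\textbf{Step 4.} The implication ``$\det\E$ not $(n-4)$-ample $\Rightarrow c_1(\E)^4=0$'' is false in general. One fiber of $\varphi_{\det\E}$ of dimension $\ge n-3$ says nothing about the general fiber, so $\det\E$ may still be big, and restricting to fibers cannot produce a global vanishing. The correct input is Lemma \ref{c3no}(ii), which gives $c_1(\E)^4=0$ directly from $c_2(\E)^2=0$ and $c_3(\E)\ne 0$. With that, your use of Theorem \ref{c1^4=0} yields (i1). For (i2) you must also discard cases (iii) and (v2) of \cite[Thm.~1]{lms1}, which are incompatible with $c_3(\E)\ne 0$.
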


For an example that is not linear Ulrich triple but $X$ has a morphism with connected fibers onto a smooth curve, see \cite[Ex.~10.1]{blv} (or Lemma \ref{pf}). On the other hand, in Example \ref{esbs}, we have that $(X,\O_X(1))$ is not $\P^k$-bundle over another variety, but there are several linear spaces $M \subset X$ as in Corollary \ref{c22=0}. 

Next, we consider the case of an Ulrich bundle with $c_3(\E)=0$ (this case always occurs when $c_2(\E)^2=0, c_1(\E)^4 \ne 0$ by Lemma \ref{c3no}(ii)). Except for case (a) below, we do not have a classification result here, but we can show that the pair $(X, \O_X(1))$ belongs to some well-known classes of varieties covered by lines.

\begin{bthm} \hskip 3cm
\label{trecasi}

Let $X \subset \P^N$ be a smooth irreducible variety of dimension $n \ge 3$, degree $d \ge 2$ and let $\E$ be a rank $r \ge 3$ Ulrich bundle on $X$. If $c_3(\E)=0$, then $(X, \O_X(1))$ is one of the following:
\begin{itemize}
\item[(a)] A linear $\P^{n-1}$-bundle over a smooth curve.
\item[(b)] A linear $\P^{n-2}$-bundle over a smooth surface.
\item[(c)] A hyperquadric fibration over a smooth curve. 
\end{itemize}
If $c_2(\E)^2=c_3(\E)=0$ and $n \ge 4$, we have: 
\begin{itemize}
\item[(a1)] In case (a), either $(X,\O_X(1),\E)$ is a linear Ulrich triple over the curve or $\E$ is as in Lemma \ref{pf}(i).
\item[(b1)] In case (b), either $(X,\O_X(1),\E)$ is a linear Ulrich triple over the surface or $n=4$ and $\det \E$ is not ample. 
\item[(c1)] In case (c), $n=4, B \cong \P^1$ and $\det \E$ is not ample.
\end{itemize}
\end{bthm}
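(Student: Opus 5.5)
Since $\E$ is Ulrich it is globally generated, so $c_3(\E)$ is the class of the degeneracy locus $D_{r-3}(s_1,\dots,s_{r-2})$ of $r-2$ general sections. Hence $c_3(\E)=0$ means that $r-2$ general sections are everywhere independent, and $\E$ contains a trivial subbundle of rank $r-2$:
\[
0 \to \O_X^{\oplus (r-2)} \to \E \to \F \to 0 ,
\]
with $\F$ a globally generated rank $2$ bundle, $c_1(\F)=c_1(\E)$ and $c_2(\F)=c_2(\E)$. A trivial summand is impossible for an Ulrich bundle on $X$ of degree $d\ge 2$, because $H^0(\E(-1))=0$ and $\O_X$ is not Ulrich. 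So the extension is nonsplit. Moreover $\E$ has many trivial quotients along the fibres of the morphism $\Phi$ defined by $H^0(\E)$ to a Grassmannian.

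My plan is to use the known structure results for Ulrich bundles with vanishing higher Chern classes. These are the results of \cite{lms1, lm} on non-big Ulrich bundles, where the ``degree'' of $\E$, namely the largest $k$ with $c_k(\E)\ne 0$, controls the dimension of the fibres of $\Phi$. Since $c_3(\E)=0$, we get $c_i(\E)=0$ for $i\ge 3$, by the $\Phi$-pullback description of Chern classes as Schubert cycles. Therefore $\Phi$ factors through a variety of dimension at most $2$ in the relevant sense, or, equivalently, the Ulrich subvariety structure forces $X$ to be covered by linear spaces of dimension $\ge n-2$ on which $\E$ is trivial. Using that $\E$ restricted to such a linear space $M$ is trivial, and Ulrich restrictions to linear spaces, I would show that through a general point there is a linear $\P^{n-2}$ or a quadric of dimension $n-1$.

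The next step is to apply adjunction-theoretic classification: Beltrametti–Sommese–Wiśniewski results for varieties covered by large linear spaces (or by quadrics of dimension $n-1$). This yields exactly three cases: a $\P^{n-1}$-bundle over a curve, a $\P^{n-2}$-bundle over a surface, or a quadric fibration over a curve. I expect the main obstacle to be here, in excluding other adjunction-theoretic outcomes such as del Pezzo-type or scroll-in-the-adjoint-sense cases where $\rho=1$. For these I would use $r\ge3$ and $d\ge2$ together with the Ulrich condition on $c_1$, namely $c_1(\E)\cdot H^{n-1}=\frac{r}{2}(K_X+(n+1)H)H^{n-1}$.

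For (a1)--(c1), add $c_2(\E)^2=0$ and $n\ge4$. In cases (a) and (b), restrict $\E$ to the fibres $F$. The restriction is Ulrich on a linear space, hence trivial, or else $\E_{|F}$ must be computed via Lemma \ref{pf}. If it is trivial on every fibre, then $\E$ is a pullback tensored with $\O(1)$-twist, giving a linear Ulrich triple. Otherwise, for a surface base, Theorem \ref{c1^4=0} and Corollary \ref{c22=0}(ii) force $n=4$ and $\det\E$ non-ample, since an ample $\det \E$ would make general Ulrich subvarieties complete intersections incompatible with the fibration. In case (c), the fibres are quadrics $Q^{n-1}$. Ulrich bundles on quadrics are spinor sums, so the vanishing $c_2(\E)^2=0$ forces $n-1=3$. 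Then $B\cong\P^1$ by a degree computation, and $\det\E$ fails to be ample because it is trivial on lines of the spinor direction.
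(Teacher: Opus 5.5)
\textbf{The central step is false.} You argue that since $c_i(\E)=0$ for $i\ge 3$, the map $\Phi$ ``factors through a variety of dimension at most $2$'', so that $X$ is covered by $(n-2)$-planes on which $\E$ is trivial. Vanishing of $c_3(\E)=\Phi^*\sigma_3$ says only that $\Phi(X)$ misses a general special Schubert cycle of codimension $3$. It puts no bound on $\dim \Phi(X)=\nu(\det\E)$ (see \eqref{muk2}).

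The bundles of Lemma \ref{pf}(i) are counterexamples. For them $c_3(\E)=0$ but $c_1(\E)^n>0$ (as noted right after Lemma \ref{c3no}), so $\Phi$ is generically finite. Hence no positive-dimensional subvariety through a general point has $\E$ trivial on it, and on the fibres $F\cong\P^{n-1}$ one has $\E_{|F}\cong\O_F(1)\oplus\O_F^{\oplus(r-1)}$. Restrictions of Ulrich bundles to such linear spaces are not Ulrich either, so that tool is unavailable.

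What $c_3=0$ really gives is much weaker, and it is the missing idea. By \cite[Prop.~4.3(i)]{blv}, $\B_+(\E_{|X_3})=X_3$. By \cite[Thm.~2]{bu}, $\B_+$ of an Ulrich bundle is the union of the lines on which it is not ample. So $X_3$ is covered by lines, and Proposition \ref{abc} (Lanteri--Palleschi for threefolds, then extending the fibration from $X_3$ up to $X$) yields (a)--(d).

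The cases $\rho(X)=1$ and del Pezzo are not excluded by the $c_1$ formula. They are excluded by \cite[Lemma 7.6(ii) and Cor.~1]{blv}: if $\Pic(X)\cong\Z\O_X(1)$, then Ulrich subvarieties are connected, so $c_3(\E)\ne 0$. This gives $d\ge 3$, and the del Pezzo classification then leaves only $d\in\{6,7\}$, which fall under (b).

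\textbf{The second part is not proved either.} The arguments ``$\E$ trivial on fibres'' and ``spinor sums on the quadric fibres'' fail for the same reason: $\E_{|F}$ is not Ulrich. The claim that a complete-intersection Ulrich subvariety is ``incompatible with the fibration'' is not an argument. In case (a) it is compatible, and it is exactly what produces Lemma \ref{pf}(i). What is needed is the following.
\begin{itemize}
\item Treat $c_2(\E)=0$ separately. By \cite[Thm.~2]{blv}, $\E$ gives a linear Ulrich triple over a curve, and its $\P^{n-1}$-fibres force case (a) with the same $\pi$.
\item When $\det\E$ is $(n-4)$-ample, use Lemma \ref{mapc} to get a curve fibration, show it factors through $\pi$, and use the cohomology vanishings of Lemma \ref{mapc}(ii) to pin down $\det\E$ (Lemma \ref{pfgen}).
\item Then Riemann--Roch and Ulrich Chern-class identities yield contradictions: $d+e(r-2)=0$ in case (c), and \eqref{ult} and \eqref{ult2} in case (b).
\item The non-$(n-4)$-ample situations go through Lemma \ref{qa}, Remark \ref{qa2} and Lemma \ref{pf2}. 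This is where $B\cong\P^1$ comes from, not from a degree count.
\item The hypothesis $c_3(\E)=0$ is what rules out the product cases $\P^s\times\P^3$ in Lemmas \ref{pf} and \ref{sup}.
\end{itemize}
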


Putting together the above results, we have the following almost complete classification of Ulrich bundles with $c_2(\E)^2=0$.

\begin{bthm} \hskip 3cm
\label{c2^2=0}

Let $X \subset \P^N$ be a smooth irreducible variety of dimension $n \ge 4$, degree $d \ge 2$ and let $\E$ be a rank $r \ge 2$ Ulrich bundle on $X$ with $c_2(\E)^2=0$. Then we have:
\begin{itemize}
\item [(i)] $c_2(\E)=0$ if and only if $(X,\O_X(1),\E)$ is a linear Ulrich triple over a curve.
\end{itemize}
Now assume that $c_2(\E) \ne 0$. Then:
\begin{itemize}
\item[(ii)] If $c_1(\E)^4=0$ then $(X,\O_X(1),\E)$ is as in Theorem \ref{c1^4=0}(ii) and (iv).
\item[(iii)] If $c_1(\E)^4 \ne 0$ and $c_3(\E) \ne 0$, then $(X,\O_X(1),\E)$ is as in Corollary \ref{c22=0}(i1)-(i2).
\item[(iv)] If $c_1(\E)^4 \ne 0, c_3(\E)=0$ and $r \ge 3$, then either $(X,\O_X(1),\E)$ is as in Lemma \ref{pf}(i), or is a linear Ulrich triple over a smooth surface, unless possibly $n=4$ and $\det \E$ is not ample. 
\end{itemize}
\end{bthm}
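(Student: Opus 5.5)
The plan is to assemble Theorem \ref{c2^2=0} from Theorem \ref{c1^4=0}, Corollary \ref{c22=0} and Theorem \ref{trecasi}. The only new ingredient is the characterization in (i), which I would take from \cite[Thm.~2]{blv}.

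For (i), I will invoke \cite[Thm.~2]{blv}, which classifies Ulrich bundles with $c_2(\E)=0$. When $d \ge 2$ and $r \ge 2$, the case $(\P^n,\O_{\P^n}(1),\O^{\oplus r})$ is excluded by $d\ge 2$, so $(X,\O_X(1),\E)$ is a linear Ulrich triple over a curve. For the converse, I will take a linear Ulrich triple over a curve $B$, so that $X=\P(\F)$ with $\E=p^*(\G(\det\F))$ or a similar pullback from $B$. Then $c_2(\E)$ is a pullback of a class of degree $2$ from the curve $B$, hence zero; alternatively I can quote \cite[Thm.~2]{blv} again. In either case $c_2(\E)^2=0$ holds trivially.

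Now assume $c_2(\E)\neq 0$. For (ii), if $c_1(\E)^4=0$ I apply Theorem \ref{c1^4=0}. When $n\ge5$, case (i) there is $\P^n$, which is excluded by $d\ge2$, so we land in (ii); when $n=4$, case (iii) is excluded by $c_2(\E)\ne0$ and we land in (iv). For (iii), I apply Corollary \ref{c22=0} with $c_3(\E)\ne0$ and directly obtain (i1) or (i2). For (iv), with $c_3(\E)=0$ and $r\ge3$, Theorem \ref{trecasi} places $(X,\O_X(1))$ in one of its cases (a), (b), (c), and since $c_2(\E)^2=0$ and $n\ge4$, (a1), (b1), (c1) apply. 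In case (a), either $\E$ is as in Lemma \ref{pf}(i) or we have a linear Ulrich triple over a curve; the latter forces $c_2(\E)=0$ by part (i), a contradiction. In case (b), we get a linear Ulrich triple over a surface, or else $n=4$ and $\det\E$ is not ample. In case (c), again $n=4$ and $\det\E$ is not ample. Together these give the stated alternative, and the exceptions fall under the ``unless possibly'' clause.

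The main obstacle is the bookkeeping in (i): I must check that the statement of \cite[Thm.~2]{blv} really matches ``linear Ulrich triple over a curve'' under $d\ge2$, $r\ge2$. I must also confirm that every linear Ulrich triple over a curve has $c_2(\E)=0$, which I would verify through the explicit pullback form of Definition \ref{not4}. The other parts are formal case eliminations.
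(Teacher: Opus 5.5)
Your proposal is correct and matches the paper's proof: (i) from \cite[Thm.~2]{blv}, (ii) from Theorem \ref{c1^4=0}, (iii) from Corollary \ref{c22=0} and (iv) from Theorem \ref{trecasi}, with the same eliminations ($d \ge 2$ rules out $\P^n$, and $c_2(\E)\ne 0$ rules out linear Ulrich triples over a curve). As a side remark, the hypotheses of (iii) can never hold, since $c_2(\E)^2=0$ and $c_3(\E)\ne 0$ force $c_1(\E)^4=0$ by Lemma \ref{c3no}(ii); your argument through Corollary \ref{c22=0} is valid regardless.
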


We point out that, while a triple $(X,\O_X(1),\E)$ is as in Lemma \ref{pf} exists even when $r=2, 2 \le n \le 4$ (see \cite[Ex.~10.1]{blv}), we have no examples in the two cases excluded in (iv) above.

We now consider the geometry of Ulrich subvarieties associated to $\E$ (see Definition \ref{usv}).

First, Theorem \ref{trecasi} allows to show that Ulrich subvarieties are often irreducible.

\begin{bcor} \hskip 3cm
\label{conn}

Let $X \subset \P^N$ be a smooth irreducible variety of dimension $n \ge 3$ and let $\E$ be a rank $r \ge 3$ Ulrich bundle on $X$ with $c_2(\E) \ne 0$. Then all Ulrich subvarieties associated to $\E$ are irreducible under one of the following hypotheses: 
\begin{itemize}
\item[(1)] $(X, \O_X(1))$ is not one of the following:
\begin{itemize}
\item[(a)] A linear $\P^{n-1}$-bundle over a smooth curve.
\item[(b)] A linear $\P^{n-2}$-bundle over a smooth surface.
\item[(c)] A hyperquadric fibration over a smooth curve. 
\end{itemize}
\end{itemize}
\begin{itemize}
\item[(2)] $c_2(\E)^2=0, n \ge 4$ and
\begin{itemize}
\item[(a1)] $(X,\O_X(1))$ is as in $(a)$ and $\E$ is not as in Lemma \ref{pf}(i).
\item[(b1)] $(X,\O_X(1))$ is as in $(b)$, $(X,\O_X(1),\E)$ is not a linear Ulrich triple over the surface and either $n \ge 5$ or $n=4$ and $\det \E$ is ample. 
\item[(c1)] $(X, \O_X(1))$ is as in (c) and either $n \ge 5$ or $n=4$ and either $B \not\cong \P^1$ or $B \cong \P^1$ and $\det \E$ is ample.
\end{itemize}
\end{itemize}
\end{bcor}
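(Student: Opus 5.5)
The plan is to reduce irreducibility of Ulrich subvarieties to the non-vanishing of $c_3(\E)$, and then use Theorem \ref{trecasi} to see when $c_3(\E)=0$ can occur. First recall the structure of an Ulrich subvariety $Z$ associated to $\E$. It is the degeneracy locus of $r-1$ general sections of $\E$, so it has pure codimension two and class $c_2(\E)$, and it is nonempty because $c_2(\E)\ne 0$. Since $\E$ is globally generated, Bertini-type results for degeneracy loci (Kleiman, as in \cite{blv}) show that a general $Z$ is smooth, or at least smooth off codimension two in $Z$. It is also locally Cohen--Macaulay and, by the Ulrich vanishings applied to the Eagon--Northcott resolution of $\I_Z$, it satisfies $h^0(\O_Z)=h^0(\O_X)=1$ whenever the relevant cohomology of $\I_Z$ vanishes. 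As in \cite[Cor.~1]{blv}, the connectedness of $Z$ is controlled by $c_3(\E)$. If $c_3(\E)\ne 0$, then the degeneracy locus of $r-2$ general sections, a subvariety $W$ of codimension three with class $c_3(\E)$, is nonempty. One then shows that any two components of $Z$ must meet along $W$-type loci, so $Z$ is connected. I would invoke \cite[Cor.~1]{blv} directly: if $c_3(\E)\ne 0$, every Ulrich subvariety is connected.

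Next I pass from connected to irreducible. $Z$ is Cohen--Macaulay of codimension two, and its singular locus is contained in the degeneracy locus of $r-2$ of the sections, which has codimension at least $3$ in $X$, hence codimension at least $1$ in $Z$. By Hartshorne's connectedness theorem, two components meeting in codimension at least two inside a Cohen--Macaulay scheme would disconnect it. So it suffices to know that distinct components cannot meet in codimension one in $Z$. Here I would use that a point where two components meet is singular, so it lies in the $c_3$-locus $W$. Arguing on a general $Z$ and then on all $Z$ by semicontinuity, or directly for each $Z$ defined by sections with the expected codimension, I expect to get that connected implies irreducible. This part may already be in \cite{blv} or in the preliminaries. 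I regard this step, making "all" rather than "general" Ulrich subvarieties work, as the main obstacle. If needed, I would restrict the statement to Ulrich subvarieties as in Definition \ref{usv}, which are by definition smooth or of expected codimension, so that smooth plus connected gives irreducible immediately.

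It then remains to check that $c_3(\E)\ne 0$ under each hypothesis. Under (1), if $c_3(\E)=0$, Theorem \ref{trecasi} (which applies since $r\ge 3$ and $d\ge 2$; the case $d=1$, $X=\P^n$, has $\E$ trivial and $c_2(\E)=0$, which is excluded) forces $(X,\O_X(1))$ to be of type (a), (b) or (c), a contradiction. Under (2), with $c_2(\E)^2=0$ and $n\ge 4$, suppose $c_3(\E)=0$ and use parts (a1)--(c1) of Theorem \ref{trecasi}:
\begin{itemize}
\item in case (a), $\E$ is a linear Ulrich triple over the curve or as in Lemma \ref{pf}(i); the latter is excluded, and the former has $c_2(\E)=0$ (pullback from a curve), which is also excluded;
\item in case (b), either we have a linear Ulrich triple over the surface, which is excluded, or $n=4$ and $\det\E$ is not ample, which is also excluded;
\item in case (c), we need $n=4$, $B\cong\P^1$ and $\det \E$ not ample, which is excluded.
\end{itemize}
Hence $c_3(\E)\ne 0$ in every case, and irreducibility follows from the first two steps.
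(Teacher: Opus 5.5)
Your overall strategy coincides with the paper's. A disconnected Ulrich subvariety forces $c_3(\E)=0$ by \cite[Prop.~6.1 and Cor.~1]{blv}, and $c_3(\E)=0$ is then ruled out by Theorem \ref{trecasi}. Your case check under (1) and (2) is exactly what is needed, including $d\ge 2$ via Lemma \ref{ulr}(iii) and $c_2(\E)=0$ for a linear Ulrich triple over a curve.

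The step that does not go through as written is the one you flag yourself: connected $\Rightarrow$ irreducible.
\begin{itemize}
\item Your Hartshorne argument only places $\Sing(Z)$ inside the $c_3$-locus $W$ of $r-2$ of the sections. But $W\subset Z$ and $W$ has codimension $3$ in $X$, so it is a divisor in $Z$. That bound cannot prevent two components from meeting in codimension one.
\item Your fallback misreads Definition \ref{usv}. Ulrich subvarieties are not ``smooth or of expected codimension''; for $n\ge 6$ and $r\ge 3$ they need not be smooth. So ``smooth plus connected'' leaves exactly the singular case open.
\end{itemize}

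The missing observation is the paper's entire argument for this step: by Definition \ref{usv}(b)--(c), every Ulrich subvariety is \emph{normal}. In a normal Noetherian scheme all local rings are domains, so two distinct irreducible components cannot meet. Hence an Ulrich subvariety is irreducible if and only if it is connected.

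Your Hartshorne route can also be repaired, but only with the correct bound. Definition \ref{usv}(c) gives $\dim \Sing(Z)=n-6$, i.e.\ codimension $4$ in $Z$; for general $\varphi$ this locus is $D_{r-3}(\varphi)$, not $W$. Connectedness in codimension one of the Cohen--Macaulay scheme $Z$ then yields irreducibility. With either fix, your proof is the paper's.
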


When $(X, \O_X(1))$ is of type (a)-(c) above, Ulrich subvarieties might be both reducible and irreducible, see Examples \ref{a}, \ref{b}, \ref{c}. On the other hand, when $c_2(\E)^2=0$, we can give a description of reducible (or equivalently disconnected, since they are normal) general Ulrich subvarieties, as follows.

\begin{bcor} \hskip 3cm
\label{conn2}

Let $X \subset \P^N$ be a smooth irreducible variety of dimension $n \ge 4$ and let $\E$ be a rank $r$ Ulrich bundle on $X$ with $c_2(\E) \ne 0$ and $c_2(\E)^2=0$ and let $Z$ be a general Ulrich subvariety associated to $\E$. Assume that $(X, \O_X(1))$ is as in (a)-(c) of Corollary \ref{conn} and, if $n=4$, in case (b), $\det \E$ is ample, in case (c), either $B \not\cong \P^1$ or $B \cong \P^1$ and $\det \E$ is ample (in particular this assumption holds if $Z$ is disconnected, $r \ge 3$ and $(n, B, \det \E)$ satisfy the above). Then, either
\begin{itemize}
\item[(i)] $(X, \O_X(1), \E) \cong (\P^s \times \P^3, \O_{\P^s}(1) \boxtimes  \O_{\P^3}(1), \pi_2^*(\O_{\P^3}(1))^{\oplus r})$, $s \in \{1, 2\}$ and $Z$ is a $\P^s$-bundle over a smooth irreducible curve $Z_B \subset \P^3$ defined by the $(r-2) \times (r-2)$'s minors of an $r \times (r-2)$ matrix of general linear forms, or
\end{itemize}
\begin{itemize}
\item[(ii)] $(X, \O_X(1))$ is as in (a)-(b) and $Z$ is a disjoint union of $\P^{n-2}$'s.
\end{itemize}
Moreover, if $r \ge 3$, any $Z$ in (ii) is disconnected.
\end{bcor}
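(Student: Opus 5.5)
We will prove Corollary \ref{conn2} by a case analysis on the three types (a)--(c) of Corollary \ref{conn}, after first describing the structure of a general $Z$ forced by $c_2(\E)^2=0$.

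\textbf{Step 1: $Z$ lies in fibres.} Since $c_2(\E)\ne 0$ and $c_2(\E)^2=0$, by \cite[Lemma 4.5]{blv} the Ulrich subvarieties move on a smooth divisor, and by Lemma \ref{mov} they typically cover $X$. We use this to show that each connected component of a general $Z$ is contained in a fibre, or in a union of fibres, of the structure morphism $\pi: X\to B$ given by (a), (b) or (c). Here $B$ is a curve or a surface. The test is to intersect $[Z]=c_2(\E)$ with pull-backs of classes from $B$: vanishing of $c_2(\E)\cdot\pi^*(\text{pt})$ and similar products should force the components to be vertical. A general $Z$ is smooth in codimension one and normal, so "reducible" and "disconnected" are the same for it.

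\textbf{Step 2: case (c).} We exclude this case. In a hyperquadric fibration the fibres are quadrics $Q$ of dimension $n-1\ge 3$. A codimension-two $Z$ inside fibres would have to be a union of whole fibres, which is impossible in codimension two, or a divisor inside finitely many fibres. Such a divisor has class of the form $c\,\pi^*(\text{pt})\cdot H$. We then use the Ulrich conditions: $\E_{|Q}$ is Ulrich on $Q$, and $\det\E$ is ample when $n=4$. With Theorem \ref{trecasi}(c1), these should give a contradiction, in particular with $c_3(\E)$ and the degree count.

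\textbf{Step 3: cases (a) and (b).} In case (a) the fibres are $\P^{n-1}$, and $\E$ restricted to a fibre is Ulrich on $\P^{n-1}$, hence trivial. Then $\E\cong\pi^*G$ twisted appropriately, so $Z$ is a union of codimension-one linear subspaces of fibres, that is, a disjoint union of $\P^{n-2}$'s, giving (ii). In case (b), $\E_{|F}$ is trivial on each fibre $\P^{n-2}$. Either $\E=\pi^*G$ for an Ulrich-type bundle $G$ on the surface, in which case $Z=\pi^{-1}(\text{points})$, a disjoint union of $\P^{n-2}$'s, again (ii). Or the fibres are not all the same and we use Theorem \ref{trecasi}(b1) together with the ampleness hypothesis.

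\textbf{Step 4: the product case.} The product case (i) arises when $Z$ is not a union of fibres but a $\P^s$-bundle over a curve. By Theorem \ref{c1^4=0} and Corollary \ref{c22=0}(i1), the triple must then be a linear Ulrich triple over a threefold $B'$, and the threefold must be $\P^3$ with $\E=\pi_2^*\O_{\P^3}(1)^{\oplus r}$ and $s=n-3\in\{1,2\}$. Then $Z=\pi_2^{-1}(Z_B)$, where $Z_B$ is the degeneracy locus of $r-2$ general sections of $\O_{\P^3}(1)^{\oplus r}$, which is the stated determinantal curve.

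\textbf{Step 5: the final claim.} For $r\ge3$ we show that $Z$ in (ii) is disconnected by computing its number of components. This equals $c_2$ of the bundle on the base paired with a point, which is at least $2$ when $r\ge3$ by the Ulrich Chern class formulas.

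\textbf{Main obstacle.} The hardest step is Step 1 together with the exclusion in Step 2: ruling out components of $Z$ that are not vertical while only using $c_2^2=0$ and the ampleness assumptions for $n=4$. For this I would first use the explicit description of $c_2(\E)$ in terms of $\pi^*$ classes from \cite{blv}.
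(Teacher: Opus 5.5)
\textbf{There is a genuine gap at the core of the proposal.}

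\emph{Step 3 fails in case (a).} You claim that $\E_{|F}$ is Ulrich on a fibre $F\cong\P^{n-1}$ of $\pi$, hence trivial, hence $\E$ is pulled back from $B$. Lemma \ref{ulr}(iv) only passes Ulrichness to hyperplane sections, and $F$ is not one. Worse, the conclusion contradicts the hypotheses: a bundle pulled back from the curve $B$ has $c_2(\E)=0$. This is why, in the paper, a linear Ulrich triple can only occur in case (b). The bundles that actually occur in case (a) are the extensions $0\to\xi+\pi^*M\to\E\to\pi^*(\G(\det\F))\to0$ of Lemma \ref{pf}(i), which have $\E_{|F}\cong\O_F(1)\oplus\O_F^{\oplus(r-1)}$. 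The other possibility is the product $\P^1\times\P^3$, where $\E_{|F}\cong\O_F(1)^{\oplus r}$. So you have no argument for (ii) in case (a).

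\emph{Step 1 does not repair this.} Saying that intersecting $c_2(\E)$ with pull-backs ``should force'' verticality is not an argument. Verticality is also false in general: in (i), $Z\cong\P^s\times Z_B$ dominates $B=\P^s$.

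\emph{The missing idea is Lemma \ref{mapc}.} In case (a), outside the product, $\det\E$ is $(n-4)$-ample. For $n\ge5$ this is Remark \ref{qa2}(i), since otherwise $\E$ would be pulled back from $B$. For $n=4$ it holds because otherwise Lemma \ref{pf}(ii) gives the product. Lemma \ref{mapc} then yields $c_3(\E)=0$ and $Z=\overline Z\cap Y$, with $Y\in|\det\E|$ smooth and $\overline Z=p^*\Delta$ for a morphism $p:X\to C$ with connected fibres onto a curve. The rigidity lemma gives $p=\pi$, and smoothness of $Z$ forces $\Delta$ to be reduced. Finally, $Y\sim\xi+\pi^*M$ (Lemma \ref{pf}) cuts each fibre over $\Delta$ in a hyperplane. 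This is what produces a disjoint union of $\P^{n-2}$'s.

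\emph{Steps 2 and 4 use results whose hypotheses you have not verified.}
\begin{itemize}
\item Theorem \ref{trecasi}(b1),(c1) assume $c_3(\E)=0$. This is not given in the statement; you only get it from disconnectedness when $r\ge3$.
\item Corollary \ref{c22=0}(i1) needs $c_3(\E)\ne0$ and $n\ge5$, and (i2) only returns the list of \cite[Thm.~1]{lms1} when $n=4$.
\item Nothing you write connects ``$Z$ is not a union of fibres'' to $c_3(\E)\ne0$, or pins down the threefold as $\P^3$ together with $\E$.
\end{itemize}
The paper instead uses Lemmas \ref{qf}, \ref{sup} and \ref{pf}, which carry no assumption on $c_3$. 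Under the stated hypotheses:
\begin{itemize}
\item case (c) cannot occur;
\item case (b) gives either a linear Ulrich triple over the surface, where your description of $Z$ as the union of fibres over finitely many points is correct by \cite[Lemma 4.6]{blv}, or $\P^2\times\P^3$;
\item case (a) gives either Lemma \ref{pf}(i) or $\P^1\times\P^3$.
\end{itemize}
The product cases are identified via Remark \ref{qa2}(ii) and explicit intersection computations. For $s=2$ the bundle is $\pi_2^*(\O_{\P^3}(2))^{\oplus r}$ (Lemma \ref{pf2}, and the paper's proof); $\pi_2^*\O_{\P^3}(1)$ is not Ulrich on $\P^2\times\P^3$.

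\emph{Step 5 is only a sketch.} Disconnectedness for $r\ge3$ follows from $c_3(\E)=0$ together with \cite[Cor.~1]{blv}. Here $c_3(\E)=0$ comes from Lemma \ref{mapc} in case (a), and from $\E$ being a pull-back from a surface in case (b). The parenthetical claim in the statement is simply Corollary \ref{conn}, which you do not address.
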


\section{Notation}

Unless otherwise specified, we henceforth establish throughout the paper the following:

\begin{notation} 

\hskip 3cm

\begin{itemize} 
\item $X \subset \P^N$ is a smooth irreducible variety of dimension $n \ge 1$ and degree $d$.
\item $H \in |\O_X(1)|$ is a very ample divisor. 
\item $\rho(X)$ is the Picard number of $X$.
\item $\nu(\L)$ is the numerical dimension of a nef line bundle $\L$ on $X$.
\item $\P(\F) = \Proj(\Sym(\F))$, where $\F$ is a vector bundle on $X$.
\item For $1 \le i \le n-1$, let $H_i \in |H|$ be general divisors and set $X_n:=X$ and $X_i=H_1\cap \ldots \cap H_{n-i}$. 
\item For $n \ge 2$, we let $Q_n \subset \P^{n+1}$ be a smooth quadric and we let $\mathcal S, \mathcal S', \mathcal S''$ be the spinor Ulrich bundles on $Q_n$. 
\item We denote by $\mathbb G(k, \P(V))$ the Grassmannian of $k$-dimensional projective subspaces of a projective space $\P(V)$.
\end{itemize} 
\end{notation}

We work over the complex numbers.

\section{Definitions and preliminary results}

\subsection{$q$-ample bundles}

We recall, and we will often use, the following definition and fact.

\begin{defi}
Let $q \ge 0$ and let $\L$ be a line bundle on $X$. We say that $\L$ is {\it $q$-ample} if for every coherent sheaf $\F$ on $X$, there exists an integer $m_0>0$ such that $H^i(\F(m\L))=0$ for $m \ge m_0$ and $i > q$. Let $\E$ be a vector bundle on $X$. We say that $\E$ is {\it $q$-ample} if $\O_{\P(\E)}(1)$ is $q$-ample. 
\end{defi}

\begin{remark}
\label{somm} 
Let $\E$ be globally generated bundle. It follows from \cite[Prop.~1.7]{s} that $\E$ is $q$-ample if and only if it is $q$-ample in the sense of \cite[\S 2.1, page 43]{bs}, \cite[Def.~1.3]{s}, that is if every fiber of the morphism associated to $\O_{\P(\E)}(1)$ is at most $q$-dimensional. Moreover, if $\E$ is $q$-ample, then $\det \E$ is $q$-ample by \cite[Cor.~1.10]{s}.\end{remark}

\subsection{Varieties covered by lines}
\label{cov}

We study here some families of varieties covered by lines. Even though this subject has been widely studied, we could not locate in the literature some of the ensuing results.

A tool that we will use is the Fano variety of lines in $X$.

\begin{defi}
Let $X \subset \P^N$ and let $x \in X$ be a point. We denote by $F_1(X, x)$ the variety of lines $L \subset \P^N$ such that $x \in L \subset X$. We say that {\it $X$ is covered by lines} if $F_1(X, x) \ne \emptyset$ for a general point $x \in X$.
\end{defi}

\begin{remark}
When $X$ is covered by lines, then $F_1(X,x)$ is smooth on a general $x$ and $\dim_{[L]} F_1(X,x) = - K_X \cdot L -2$ for every $[L] \in F_1(X,x)$ (see for example \cite[Prop.s~2.2.1 and 2.3.9]{ru}).
\end{remark}

Among varieties covered by lines, we have the following (see for example \cite[Def.~page 461]{i}, \cite{lp}).

\begin{defi}
\label{fib2}
Let $X \subset \P^N$ be a smooth irreducible variety of dimension $n \ge 2$. 

\noindent $\bullet$ We say that $(X, \O_X(1))$ is a {\it linear $\P^k$-bundle} over a smooth variety $B$ if $(X, \O_X(1))=(\P(\F), \O_{\P(\F)}(1))$, where $\F$ is a very ample vector bundle on $B$ of rank $k+1$.

\noindent $\bullet$ We say that $(X, \O_X(1))$ is a {\it hyperquadric fibration} over a smooth curve $B$, if there exists a surjective morphism $f: X \to B$ such that every closed fiber is isomorphic to a quadric $Q \subset \P^n$ and $\O_X(1)_{|Q} \cong \O_Q(1)$.
\end{defi}

Consider now the following classes of pairs $(X, \O_X(1))$, with $X$ covered by lines:

\begin{itemize}
\item[(a)] A linear $\P^{n-1}$-bundle over a smooth curve.
\item[(b)] A linear $\P^{n-2}$-bundle over a smooth surface.
\item[(c)] A hyperquadric fibration over a smooth curve. 
\item[(d)] A Del Pezzo $n$-fold (that is $-K_X=(n-1)H$), except $(\P^3, \O_{\P^3}(2))$.
\end{itemize}

We have

\begin{prop} 
\label{abc}
Let $X \subset \P^N$ be a smooth irreducible variety of dimension $n \ge 3$, degree $d \ge 3$ and such that $X_3$ is covered by lines. Then $(X, \O_X(1))$ is one of (a)-(d). 
\end{prop}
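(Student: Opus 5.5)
The plan is to reduce to the threefold $X_3$ and use the classical classification of threefolds covered by lines. First I apply the remark on $F_1$ to $X_3$: since $X_3$ is covered by lines, for a general point $x$ and a line $L \ni x$ in $X_3$ we have $\dim_{[L]} F_1(X_3,x) = -K_{X_3}\cdot L - 2 \ge 0$. By adjunction, $K_{X_3} = (K_X + (n-3)H)_{|X_3}$, so this reads $(K_X+(n-3)H)\cdot L \le -2$, i.e. $(K_X+(n-1)H)\cdot L \le 0$. Moreover $X_3$ is covered by such lines, and the lines sweep out $X$ as the general linear section varies, so $K_X+(n-1)H$ is not nef on a covering family. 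Hence $K_X+(n-1)H$ is not ample, and I intend to use adjunction theory (Fujita, Ionescu, Beltrametti--Sommese, \cite{bs}, \cite{i}) to classify $(X,H)$.

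Concretely, I would invoke the classical result that $K_X+(n-1)H$ is nef unless $(X,H)=(\P^n,\O(1))$, or $(Q_n,\O(1))$, or a scroll over a curve, or $(\P^2,\O(2))$. Then I would split according to whether $K_X+(n-1)H$ is nef but not big, or nef and big but not ample. In the first case the adjunction map contracts $X$ onto a lower-dimensional base: if $K_X+(n-1)H\equiv 0$, $X$ is Del Pezzo, giving (d); the exclusion of $(\P^3,\O(2))$ comes for free because $d\ge 3$ forces neither $\P^n$ nor $Q_n$ (degree $1,2$), and $(\P^3,\O(2))$ has $X_3=X$ of degree $8$ not covered by lines. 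If the adjunction morphism goes to a curve, $X$ is a hyperquadric fibration, giving (c); if it goes to a surface, $X$ is a scroll over a surface, giving (b) (a scroll in the sense $\P^{n-2}$-bundle; for $n=3$ one must check that it really is a linear $\P^1$-bundle, using Beltrametti--Sommese--Wiśniewski for $n\ge 4$ and possibly a jumping-fiber argument for $n=3$). A scroll over a curve gives (a). In the nef and big but not ample case, $K_X+(n-1)H$ contracts only finitely many $(n-1)$-dimensional linear subspaces $E\cong\P^{n-1}$ with $\O_E(E)=\O(-1)$; lines through a general point cannot lie in these, so $(K_X+(n-1)H)\cdot L>0$ for general $L$, a contradiction. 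Likewise, if $K_X+(n-1)H$ is ample, we get a contradiction immediately.

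The main obstacle is the precise application of the adjunction-theoretic classification in low dimension, especially $n=3$, where the adjunction map to a surface may have finitely many two-dimensional fibers, so $X$ is only a "scroll" rather than a genuine linear $\P^1$-bundle. Additionally, in the conic-bundle-type cases over a surface, $(K_X+(n-1)H)\cdot L=0$ fails, since the fibers are conics, not lines. I would handle this by showing that $L$ lies in a fiber of the adjunction map with $H\cdot L=1$, so the fibers are linear. For $n\ge 4$ I would reduce to $n=3$ via the hyperplane sections $X_3$, using the Lefschetz-type extension results (Sommese) that lift the fibration structures from $X_3$ to $X$.
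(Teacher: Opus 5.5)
Your argument is correct in substance, but it takes a different route from the paper's. The paper never runs adjunction theory on $X$ itself. It applies the classification of threefolds covered by lines \cite[Thm.~1.4]{lp} (with \cite[Thm.~0.2]{sv}) to $X_3$, and then lifts the structure from $X_{i-1}$ to $X_i$ one step at a time:
\begin{itemize}
\item Lefschetz on $\Pic$ for Del Pezzo manifolds;
\item \cite[Thm.~5.2.1]{bs} for the maps onto curves in (a) and (c);
\item Sommese's extension result \cite[Conj.~5.5.1]{bs} for (b);
\item an ad hoc argument, using $\dim F_1(X_3,x)=0$, to exclude $X_3\cong\P^1\times\P^2$ with its projection onto $\P^2$, where extension can fail.
\end{itemize}

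You instead turn the hypothesis into $(K_X+(n-1)H)\cdot L\le 0$ for lines $L$ through a general point of $X$; your computation via $K_{X_3}=(K_X+(n-3)H)_{|X_3}$ is right. You then classify $(X,\O_X(1))$ directly for every $n\ge 3$, just as the paper does for fivefolds in Lemma \ref{aggiu}:
\begin{itemize}
\item If $K_X+(n-1)H$ is not nef, you get $\P^n$ or $Q_n$ (excluded by $d\ge 3$) or a scroll over a curve.
\item If it is nef, it cannot be big, since a nef and big class is positive on every curve through a general point. So the blow-up case disappears, and you are left with a Del Pezzo manifold (where $(\P^3,\O_{\P^3}(2))$ is excluded because it contains no lines), a quadric fibration over a curve, or a scroll over a surface.
\end{itemize}
This is shorter and avoids the extension problem entirely. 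Your last paragraph, reducing to $n=3$ and lifting, is therefore superfluous. If you did take that path, you would have to handle the $\P^1\times\P^2$ exception just as the paper does.

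Some tidying is needed:
\begin{itemize}
\item The lines show that $K_X+(n-1)H$ is non-positive on a covering family, not that it is non-nef.
\item The conic-bundle worry is a red herring: conic bundles only arise at the $K_X+(n-2)H$ stage.
\item The one non-formal input is that, for very ample $H$, an adjunction-theoretic scroll over a surface is a linear $\P^{n-2}$-bundle over a smooth surface. This is \cite[Thm.~0.2]{sv}, which the paper itself invokes for exactly this purpose (for $X_3$, and for fivefolds in Lemma \ref{aggiu}(c.3)). So no separate jumping-fiber argument for $n=3$ is needed.
\item You should say why scrolls and quadric fibrations over curves are linear $\P^{n-1}$-bundles and hyperquadric fibrations in the sense of Definition \ref{fib2}. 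Each fiber is a Cartier divisor of $H$-degree $1$, resp.\ $2$, so this is routine; for quadric fibrations with $n\ge 4$ the paper also cites \cite{la}.
\end{itemize}
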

\begin{proof}
Since $X_3$ is covered by lines, it follows from \cite[Thm.~1.4]{lp} (for (b) use also \cite[Thm.~0.2]{sv}) that $(X_3, \O_{X_3}(1))$ is one of (a)-(d) with $n=3$. Moreover, if we are not in case (a), \cite[Thm.~4.1]{lp} also gives that $\dim F_1(X_3, x)=0$ for a general point $x \in X_3$.

We will prove inductively that if $(X_3, \O_{X_3}(1))$ is of one type in $\{(a), (b), (c), (d)\}$, then so is $(X_i, \O_{X_i}(1))$ for all $4 \le i \le n$.

Assume that $(X_3, \O_{X_3}(1))$ is a Del Pezzo $3$-fold. For $i \ge 4$, if $(X_{i-1}, \O_{X_{i-1}}(1))$ a Del Pezzo $(i-1)$-fold, we have that $(K_{X_i} + (i-1)H_{|X_i})_{|X_{i-1}}=K_{X_{i-1}} + (i-2)H_{|X_{i-1}}=0$. Since $\Pic(X_i) \to \Pic(X_{i-1})$ is an isomorphism by Lefschetz's theorem, we find that $K_{X_i} + (i-1)H_{|X_i}=0$, that is $(X_i, \O_{X_i}(1))$ is a Del Pezzo $i$-fold. 

Therefore, we can suppose that $(X_3, \O_{X_3}(1))$ is one of (a)-(c) with $n=3$, that is there is a morphism $p_3 : X_3 \to B$ as in (a)-(c). We now claim that, for each $i$ such that $3 \le i \le n$, there is a morphism $p_i : X_i \to B$ so that $(X_i, \O_{X_i}(1))$ is as in (a) (respectively (b); respectively (c)) as long as $(X_3, \O_{X_3}(1))$ is as in (a) (respectively (b); respectively (c)) and, if $i \ge 4$, $p_{i-1}={p_i}_{|X_{i-1}}$. 

We first assume that $(X_3, \O_{X_3}(1))$ is as in (a) or (c), so that $\dim B=1$, and we prove the claim by induction on $i$. The claim being trivially true for $i=3$, suppose that $i \ge 4$ and that there is a morphism $p_{i-1} : X_{i-1} \to B$ so that $(X_{i-1}, \O_{X_{i-1}}(1))$ is as in (a) or (c) and, if $i \ge 5$, $p_{i-2}={p_{i-1}}_{|X_{i-2}}$. Note that $i-1-\dim B \ge 2$, so that $p_{i-1} : X_{i-1} \to B$ extends to some morphism $p_i : X_i \to B$ by \cite[Thm.~5.2.1]{bs}. Now, in the embedding $X_i \subset \P H^0(\O_{X_i}(1)) = \P^M$ we have that $X_{i-1}$ is a hyperplane section of $X_i$ and each fiber $F_{i-1}$ of $p_{i-1}$ is a linear (possibly degenerate) $\P^{i-2} \subset \P^M$ in case (a) or a linear (possibly degenerate) $(i-2)$-dimensional quadric $Q \subset \P^M$ in case (c). Moreover, since $p_i$ extends $p_{i-1}$, each fiber $F_i$ of $p_i$ is a variety in $\P^M$ whose hyperplane section is $F_{i-1}$. Therefore also $F_i$ is a linear (possibly degenerate) $\P^{i-1} \subset \P^M$ in case (a) or a linear (possibly degenerate) $(i-1)$-dimensional quadric $Q' \subset \P^M$ in case (c). Since $\O_{X_i}(1)_{|X_{i-1}} = \O_{X_{i-1}}(1)$, it follows that $(X_i, \O_{X_i}(1))$ is as in (a) or (c).

Now assume that $(X_3, \O_{X_3}(1))$ is as in (b) but not as in (a), (c), so that $\dim B=2$, and we prove the claim by induction on $i$. The claim being trivially true for $i=3$, suppose that $i \ge 4$ and that there is a morphism $p_{i-1} : X_{i-1} \to B$ so that $(X_{i-1}, \O_{X_{i-1}}(1))$ is as in (b) but not as in (a), (c) and, if $i \ge 5$, $p_{i-2}={p_{i-1}}_{|X_{i-2}}$. Note that $i-1-\dim B = i-3$. We have that \cite[Conj.~5.5.1]{bs} is true: if $i \ge 5$ by \cite[Thm.~5.5.2]{bs}, while, if $i=4$, by what is explained on \cite[page 118]{bs}. We are certainly done by \cite[Conj.~5.5.1]{bs} if $X_{i-1} \not\cong \P^1 \times \P^{i-2}$ or if $X_{i-1} \cong \P^1 \times \P^{i-2}$ but $p_{i-1} : X_{i-1} \to B$ is not the second projection. It remains to study the case in which $X_{i-1} \cong \P^1 \times \P^{i-2}$ and $p_{i-1}=\pi_2 : X_{i-1} \to B=\P^{i-2}$ is the second projection, so that $i=4$ and $X_3 \cong \P^1 \times \P^2$. We will prove that this case does not occur.

Since \cite[Conj.~5.5.1]{bs} is true, if we set $p'_3 = \pi_1$ the first projection, we have that $(X_4, \O_{X_4}(1))$ is as in (a) over $\P^1$ and the bundle map $p_4' : X_4 \to \P^1$ extends $p'_3$. Now, in the embedding $X_4 \subset \P H^0(\O_{X_4}(1)) = \P^M$ we have that $X_3$ is a hyperplane section of $X_4$ and each fiber of $p_4'$ is a linear $\P^3 \subset \P^M$. Moreover, since $p_4'$ extends $p'_3$, each fiber $F'=\{z\} \times \P^2$ of $p'_3$ is isomorphic to $\P^2$ and is a hyperplane section of a fiber of $p_4'$, that is of a linear $\P^3$, hence each fiber of $p'_3$ is a linear $\P^2$ in $\P^M$. Since, by definition, $\O_{X_3}(1) = \O_{X_4}(1)_{|X_3}$, we deduce that $\O_{X_3}(1)_{|F'} \cong \O_{\P^2}(1)$. Let $H_1= \pi_1^*\O_{\P^1}(1), H_2= \pi_2^*\O_{\P^2}(1)$. If $G \in |\O_{X_3}(1)|$, we can write $G=uH_1+vH_2$ for some positive integers $u, v$. Now, $\O_{\P^2}(1) \cong \O_{X_3}(1)_{|F'} \cong \O_{X_3}(uH_1+vH_2)_{|\{z\} \times \P^2} \cong \O_{\P^2}(v)$, so that $v=1$. Let $x=(x_1, x_2) \in X_3 \cong \P^1 \times \P^2$ be a general point and let $[R] \in F_1(\P^2,x_2)$. Then
$$(\{x_1\} \times R) \cdot G = (\{x_1\} \times R) \cdot (uH_1+H_2)=1$$
so that $[\{x_1\} \times R] \in F_1(X_3, x)$. Therefore, $F_1(\P^2, x_2) \subset F_1(X_3,x)$, and we get the contradiction $\dim F_1(X_3, x)) \ge 1$. This proves the claim and concludes the proof.
\end{proof}

We also observe that (a)-(d) cannot mix when $n \ge 4$, with one well-known exception.

\begin{lemma}
\label{unasola}
Let $X \subset \P^N$ be a smooth irreducible variety of dimension $n \ge 4$. Then $(X, \O_X(1))$ can be only one of (a), (b), (c) in Proposition \ref{abc}. Moreover, if $(X, \O_X(1))$ is as in (d), then it is not as in (a) or (c), while it is as in (b) if and only if 
$(X, \O_X(1))=(\P^2 \times \P^2, \O_{\P^2}(1) \boxtimes \O_{\P^2}(1))$.
\end{lemma}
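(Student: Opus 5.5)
The plan is to separate the classes by the dimension of the family of lines through a general point $x \in X$, or more precisely by the degree $-K_X \cdot L$ on a line $L$ in a fiber. We pair this with a Picard-number and contraction argument. Recall $\dim_{[L]} F_1(X,x) = -K_X\cdot L - 2$.

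In case (a), $X=\P(\F)$ over a curve, and lines in the fibers give $-K_X \cdot L = n$. So $\dim F_1(X,x) \ge n-2$.

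In case (b), the fiber lines give $-K_X\cdot L = n-1$, so there is a component of dimension $n-3$.

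In case (c), a general fiber is a smooth quadric $Q^{n-1}$, and its lines give $-K_X\cdot L = n-1$. The lines through $x$ inside $Q$ form a family of dimension $n-3$.

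In case (d), $-K_X \cdot L = n-1$ for every line.

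For a linear $\P^k$-bundle or a hyperquadric fibration over $B$, $\rho(X)=\rho(B)+1$, since a smooth fiber quadric of dimension $\ge 3$ has Picard number $1$. Moreover, the map to $B$ is the contraction of an extremal ray, namely the one spanned by lines in fibers, with $K_X$ negative on that ray.

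To show (a), (b), (c) are mutually exclusive, suppose $X$ carries two such structures $p:X\to B$ and $p':X\to B'$.

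First, (a) versus (b) or (c). In case (a) the lines through a general $x$ fill out the fiber $\P^{n-1}$, of dimension $n-2\ge 2$. Any line in a fiber of $p'$ through $x$ has $-K_X\cdot L=n-1\neq n$. A line not contracted by $p$ maps isomorphically onto a rational curve in the curve $B$, so $B\cong \P^1$. We then compute $-K_X\cdot L$ for lines meeting fibers of $p$ transversally; all lines with $-K_X\cdot L=n$ must lie in the fibers of $p$. Then $p'$ contracts lines not contained in fibers of $p$, so $p'$ contracts a second ray and $\rho(X)\ge 3$. This forces $\dim B'=2$ with $\rho(B')=2$, and the fibers of $p'$, each a $\P^{n-2}$, must meet the fibers of $p$, each a $\P^{n-1}$, in a linear space of dimension $\ge n-3\ge 1$. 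A line in that intersection is contracted by both $p$ and $p'$, which is a contradiction.

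Second, (b) versus (c). The fibers $\P^{n-2}$ of $p$ and $Q^{n-1}$ of $p'$ lie in a common $n$-dimensional $X$, so they intersect in dimension $\ge n-3\ge 1$. Hence some curve is contracted by both maps. Since both are extremal contractions, they contract the same ray and therefore coincide up to isomorphism. This is impossible, because $\dim B\ne\dim B'$.

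The same intersection argument handles (a) versus (b) and (a) versus (c) directly, and I would use it uniformly: two fibers have dimensions summing to at least $n+1$, so they meet in a curve. This forces equal extremal rays, hence the same fibration, contradicting the different fiber types. The one subtlety to check is that the fibers are genuinely of the stated dimension and that each fibration is an elementary contraction. For (c) this needs $\rho(Q^{n-1})=1$, which holds since $n-1\ge3$.

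For (d), $-K_X=(n-1)H$. In case (a), $-K_X\cdot L=n$ for a fiber line, which is a contradiction. In case (c), the fiber $Q$ has $-K_X|_Q=-K_Q=(n-1)H|_Q$, which is consistent, so we need more. A Del Pezzo manifold of dimension $\ge 4$ with $\rho\ge 2$ is, by Fujita's classification, one of $\P^2\times\P^2$ or $\P(T_{\P^2})$ (in dimension $3$ there are more). Hence $n\le 4$, and $\P(T_{\P^2})$ has dimension $3$, so only $\P^2\times\P^2$ remains. It is a linear $\P^2$-bundle over $\P^2$, which is (b), and it is not (c), since both its contractions have base of dimension $2$. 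Conversely, in case (b) with $-K_X=(n-1)H$ we have $\rho\ge2$, and Fujita again gives $\P^2\times\P^2$.

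The main obstacle is justifying that each structure map is an elementary (extremal) contraction and that two distinct ones cannot share a contracted curve. I would rely on relative Picard number one and the rigidity lemma.
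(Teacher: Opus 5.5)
Your final, uniform argument is correct, but for the mutual exclusivity of (a), (b), (c) it takes a different route from the paper.

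\textbf{The paper's route.} It works with a single fiber. Through a general point it takes the fiber of one structure: a $\P^{n-1}$ in case (a), or a smooth $Q_{n-1}$ in case (c). The other structure map $p : X \to B$ restricted to this fiber must be constant, since $\dim B \le 2 < n-1$ and the fiber has Picard number one. So that fiber lies inside a fiber of $p$, which is absurd by dimension, or because a smooth quadric is not a $\P^{n-1}$.

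\textbf{Your route.} You take the fibers of both maps through a common point and use $\dim F+\dim F'-n\ge 1$ to get a curve contracted by both. You then conclude in one of two ways: by the anticanonical degree of lines ($n$ in case (a), $n-1$ otherwise), or by identifying the two contractions via relative Picard number one and rigidity.

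\textbf{Comparison.} The degree trick is a nice touch: it separates (a) from (b), (c) and (d) at once, without Fujita's classification. The extremal-ray step for (b) versus (c) costs more. It needs $\rho(X/B)=1$ for the hyperquadric fibration, i.e.\ $\Pic(X)\cong \Z\O_X(1)\oplus p^*\Pic(B)$ as in Lemma~\ref{qa}. That requires control of all fibers, not just the smooth one your justification cites. The paper only uses $\rho(Q_{n-1})=1$ for one smooth fiber. For the (d) part you and the paper agree: both use Fujita's list of Del Pezzo manifolds with $\rho\ge 2$, except that you exclude (a) directly via $-K_X\cdot L$.

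\textbf{Two clean-ups.}
\begin{enumerate}
\item Say explicitly that the two fibers $F$, $F'$ are taken through the same general point; otherwise they need not meet and the dimension count says nothing.
\item Delete your first paragraph on (a) versus (b) or (c). There, ``$p'$ contracts a second ray and $\rho(X)\ge 3$'' is false: a second extremal ray does not raise the Picard number, and $\rho(X)=2$ in case (a). The deductions about $\dim B'$ and $\rho(B')$ do not follow either.
\end{enumerate}
The intersection argument already settles this comparison. $F\cap F'$ is either a linear space of dimension $\ge n-3\ge 1$, or a quadric of dimension $\ge n-2\ge 2$ in a linear space, so it contains a line. On that line $-K_X$ would have degree both $n$ and $n-1$, a contradiction.
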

\begin{proof}
Assume that $(X, \O_X(1))$ is as in (a) and as in (b) (or (c)). Let $p : X \to B$ be the morphism giving the $\P^{n-2}$-bundle over a smooth surface in case (b) or the hyperquadric fibration over a smooth curve in case (c). Let $x \in X$ be a general point and let $x \in \P^{n-1} \subset X$. Then we have that the morphism $p_{|\P^{n-1}} : \P^{n-1} \to B$ must be constant because $\dim B \le 2 < n-1$. Therefore $\P^{n-1}$ is contained in a fiber of $p$, a contradiction both in case (b) and in case (c), since a general fiber is a smooth irreducible quadric.

Now, assume that $(X, \O_X(1))$ is as in (b) and (c). Let $p : X \to B$ be the morphism giving the $\P^{n-2}$-bundle over a smooth surface, let $x \in X$ be a general point and let $x \in Q_{n-1} \subset X$. Then we have that the morphism $p_{|Q_{n-1}} : Q_{n-1} \to B$ must be constant because $\dim B=2 < n-1$. Therefore $Q_{n-1}$ is contained in a fiber of $p$, a contradiction.

Finally, if $(X, \O_X(1))$ is as in (d) and one of (a), (b) or (c), then $\rho(X) \ge 2$, hence the classification of Del Pezzo manifolds (see for example \cite[pages 860-861]{lp}, \cite[Table, page 710]{f1}) implies that $(X,\O_X(1)) = (\P^2 \times \P^2, \O_{\P^2}(1) \boxtimes \O_{\P^2}(1))$.
\end{proof}

\subsection{Fivefolds and adjunction theory}
\label{adj}

We now collect some definitions and standard facts in adjunction theory, that we recall for completeness' sake. 

\begin{defi}
\label{fib}
Let $X \subset \P^N$ and assume that there exists a surjective morphism with connected fibers $\phi: X \to B$, over a normal variety $B$ of dimension $m$ such that $K_X+(n-m+q)H=\phi^*\L$ and $\L$ is ample on $B$. We say that $(X, \O_X(1))$ is a {\it scroll over $B$} if $q=1$, {\it a quadric fibration over $B$} if $q=0$, {\it a Del Pezzo fibration over $B$} if $q=-1$. \end{defi}

It is well known that, when $n \ge 4$, the definition of quadric fibration coincides with the one given in Definition \ref{fib2}, see for example \cite[Lemma 1 and Prop.~2]{la}.

If $K_X$ is not nef, consider the nef value of $(X,H)$ (see \cite[Def.~1.5.3]{bs})
$$\tau = \tau(X,H) = \min\{t \in \R : K_X+tH \ \hbox{is nef}\}$$
and the nef value morphism, defined for $m \gg 0$ by
$$\phi_{\tau} =  \phi_{\tau}(X,H):= \varphi_{m(K_X+\tau H)} : X \to X'.$$ 
We recall that $(\phi_{\tau})_*\O_X \cong \O_{X'}$, see \cite[Def.~1.5.3]{bs}.

In dimension $5$ we have.

\begin{lemma}
\label{aggiu}
Let $X \subset \P^N$ be a smooth irreducible fivefold such that $\dim F_1(X,x) \ge 1$ on a general point $x \in X$. Let $H \in |\O_X(1)|$, let $\tau$ be the nef value of $(X,H)$ and let $\phi_{\tau}$ be the nef value morphism. Then $(X,\O_X(1))$ is only one of the following:
\begin{itemize}
\item [(a)] $(\P^5, \O_{\P^5}(1))$. 
\item [(b.1)] $(Q_5, \O_{Q_5}(1))$.
\item [(b.2)] A linear $\P^4$-bundle over a smooth curve.
\item [(c.1)] A Del Pezzo $5$-fold, that is $K_X=-4H$.
\item [(c.2)] A quadric fibration under $\phi_{\tau} : X \to X'$ over a smooth curve.
\item [(c.3)] A linear $\P^3$-bundle over a smooth surface.
\item [(d.1)] A Mukai variety, that is $K_X=-3H$.
\item [(d.2)] A Del Pezzo fibration under $\phi_{\tau} : X \to X'$ over a smooth curve with $\tau=3$.
\item [(d.3)] A quadric fibration with equidimensional fibers under $\phi_{\tau} : X \to X'$ over a smooth surface.
\item [(d.4)] A linear $\P^2$-bundle under $\phi_{\tau} : X \to X'$ over a smooth threefold with $\tau=3$.
\item [(e)] The blow-up of a smooth fivefold at $t \ge 1$ points, with exceptional divisors $E_i \cong \P^4$ such that $H_{|E_i} \cong \O_{\P^4}(1), 1 \le i \le t$.
\end{itemize}
Moreover, in cases (d.1)-(d.4) we have that $\phi_{\tau}$ contracts any line $L$ such that $x \in L \subset X$.
\end{lemma}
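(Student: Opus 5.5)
Since $\dim F_1(X,x) = -K_X\cdot L - 2$ for a line $L$ through a general point $x$, the hypothesis $\dim F_1(X,x)\ge 1$ gives $-K_X\cdot L \ge 3$. As $H\cdot L=1$, the divisor $K_X+tH$ has negative degree on $L$ for every $t<3$. These lines cover $X$, so $K_X+tH$ is not nef for $t<3$, and hence $\tau \ge 3$. The plan is then to run adjunction theory for the fivefold $(X,H)$ with $\tau\ge 3 = n-2$, following the standard classification of the first and second reduction steps (Fujita, Ionescu, Beltrametti–Sommese, as collected in \cite[Ch.~7]{bs}).

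\textbf{Step 1: the case $\tau > n-2 = 3$.} Here I would invoke the classical list \cite[Thm.~7.2.1, 7.3.2, 7.3.4]{bs}:
\begin{itemize}
\item $\tau=6$ gives $(\P^5,\O(1))$, case (a).
\item $\tau=5$ gives $Q_5$, case (b.1), or a scroll over a curve. By the identification of scrolls over curves with linear $\P^{n-1}$-bundles, the latter is case (b.2).
\item $\tau=4$ gives a Del Pezzo fivefold (c.1), a quadric fibration over a curve (c.2), or a scroll over a surface. For $n=5\ge 5$, a scroll over a surface is a linear $\P^3$-bundle by \cite[Thm.~5.5.2]{bs}/\cite[Thm.~0.2]{sv}, which is (c.3).
\item $3<\tau<4$ occurs only in the Veronese-type exceptions ($\P^4$ with $\O(2)$ and related cases). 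These are excluded for $n=5$, or else they have no lines through the general point, which contradicts the hypothesis.
\end{itemize}

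\textbf{Step 2: the case $\tau=3$.} Here $K_X+3H$ is nef but not ample along the covering family of lines, since $(K_X+3H)\cdot L = -K_X\cdot L\cdot(-1)+3 \le 0$. So $K_X+3H$ is nef and not big, or big but not ample. Then $\phi_\tau$ is either a fibration, or birational contracting divisors.
\begin{itemize}
\item \emph{Non-birational $\phi_\tau$.} By \cite[Thm.~7.5.3]{bs} and Definition~\ref{fib}, it is one of: $K_X=-3H$ (Mukai, d.1); a Del Pezzo fibration over a curve (d.2); a quadric fibration over a surface (d.3); a scroll over a threefold (d.4).
\item \emph{Birational $\phi_\tau$.} Then $(X,H)$ is not its own first reduction, and \cite[Thm.~7.3.2]{bs} shows $\phi_\tau$ is the blow-down of disjoint $E_i\cong\P^4$ with $H_{|E_i}=\O(1)$. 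This is (e). We must also check that the second-reduction case, with $K_X+3H$ giving a nontrivial birational map on the first reduction, cannot occur with $\dim F_1\ge1$. Indeed, $\tau=3$ exactly, so the adjoint map contracting lines is $\phi_\tau$ itself.
\end{itemize}

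\textbf{Step 3: refinements in (d.3) and (d.4).} I would use known results on fivefold quadric fibrations over surfaces and scrolls over threefolds in the relevant range, \cite[Thm.~14.1.x]{bs}, to get smooth bases and equidimensional fibers. For (d.4), the known result for $n \ge 2\dim B -1=5$ would give the linear $\P^2$-bundle structure. I expect this to be the main obstacle, because the general statements for scrolls over threefolds hold only up to finitely many jumping fibers, and these must be ruled out using the covering lines.

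\textbf{Step 4: the final claim.} In (d.1)–(d.4), $\tau=3$, and $(K_X+3H)\cdot L = K_X\cdot L+3 \le 0$ by $-K_X\cdot L\ge 3$. Nefness forces equality, so $\phi_\tau$, being defined by multiples of $K_X+3H$, contracts $L$.

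**Step 5: "only one".** That the cases are mutually exclusive follows from the distinct values of $\tau$ and the distinct dimensions of $X'$ (or from $\phi_\tau$ being birational), together with Lemma~\ref{unasola} for overlaps of the $\P^k$-bundle and quadric-fibration types.
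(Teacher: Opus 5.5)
Your final list matches the statement, but you derive case (e) incorrectly, and this leaves a real gap in both Step 1 and Step 2.

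\textbf{Case (e) belongs to $\tau=4$, not $\tau=3$.} If $E\cong\P^4$ is the exceptional divisor over a blown-up point, then $\O_E(E)\cong\O_{\P^4}(-1)$. So for a line $\ell\subset E$ you get $K_X\cdot\ell=-4$ and $(K_X+tH)\cdot\ell=t-4<0$ for all $t<4$. Thus (e) always has $\tau\ge 4$. It is precisely the birational alternative of \cite[Prop.~7.3.2]{bs}, namely $K_X+4H$ nef and big but not ample. Your $\tau=4$ bullet lists only the fiber-type outcomes (c.1)--(c.3), so Step 1 is incomplete.

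\textbf{The birational case at $\tau=3$ is mishandled.} In Step 2 you claim that a birational $\phi_\tau$ means $(X,H)$ is not its own first reduction; this is false. If $\tau=3$, then $K_X+4H=(K_X+3H)+H$ is ample, so $X$ \emph{is} its own first reduction. Consequently \cite[Prop.~7.3.2]{bs}, which is about $K_X+4H$, says nothing here. A birational $\phi_\tau$ would be a second-reduction contraction, not a blow-up as in (e). Your sentence ``$\tau=3$ exactly, so the adjoint map contracting lines is $\phi_\tau$ itself'' does not exclude this case.

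\textbf{The missing step is one you already proved in Step 4, but used too late.} When $\tau=3$, $(K_X+3H)\cdot L=0$ for every line $L$ through a general point $x$. So $\phi_\tau$ contracts a positive-dimensional subvariety through a general point of $X$. Hence $\phi_\tau$ is not birational, i.e.\ $K_X+3H$ is nef and not big, and the birational case at $\tau=3$ never arises.

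This is how the paper proceeds:
\begin{itemize}
\item It establishes non-bigness at $\tau=3$ at the outset.
\item It places (e) among the non-ample cases of $K_X+4H$ in \cite[Prop.~7.3.2]{bs}.
\item It uses \cite[Prop.~7.3.4]{bs} to force $\tau=3$ once $K_X+4H$ is ample; this applies because $X$ then equals its first reduction.
\item Only the fiber-type outcomes of \cite[Prop.~7.5.3 and Thm.~14.2.3]{bs} then remain.
\end{itemize}
For (d.4), \cite[Prop.~14.1.3 and Prop.~3.2.1]{bs} directly give a linear $\P^2$-bundle over a smooth threefold when $n=5$. So the jumping-fiber analysis you anticipate in Step 3 is not needed.
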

\begin{proof}
Let $x \in X$ be a general point and let $[L] \in F_1(X,x)$. By hypothesis we have
$$1 \le \dim_{[L]} F_1(X,x) = -K_X \cdot L - 2$$
so that $K_X \cdot L \le - 3$. Hence $K_X$ is not nef and then $\tau \ge 3$. Also, if $\tau = 3$, then $K_X \cdot L= - 3$, hence $m(K_X+3H) \cdot L = 0$ for every $m \ge 1$. Therefore $K_X+3H$ is nef and not big, since the associated morphism contracts $L$ to a point. In particular also $\phi_{\tau}$ contracts $L$ to a point. This proves the last assertion of the lemma. 

We now continue the proof.

By \cite[Prop.~7.2.2]{bs} we have that either we are in case (a), or $\tau=5$ and we are in cases (b.1) or (b.2) or $\tau \le 5$ and $K_X+5H$ is big and nef. In the latter case $K_X+5H$ is ample by \cite[Prop.~7.2.3]{bs} and $\tau \le 4$ by \cite[Prop.~7.2.4]{bs}. Moreover \cite[Prop.~7.3.2]{bs} gives that $K_X+4H$ is ample unless $\tau = 4$ and either we are in one of the cases (c.1)-(c.3) or (e) (for (c.3) use also \cite[Thm.~0.2]{sv} and for (e) use also \cite[Thm.~0.3 and Rmk.~1]{sv}). Next if $K_X+4H$ is ample then $\tau < 4$ and $(X,\O_X(1))$ is isomorphic to its first reduction (see \cite[Def. 7.3.3]{bs}). Therefore \cite[Prop.~7.3.4]{bs} implies that $\tau = 3$, so that, as above, $K_X+3H$ is nef and not big. It follows from \cite[Prop.~7.5.3 and Thm.~14.2.3]{bs} that either we are in one of the cases (d.1)-(d.3) or $(X,\O_X(1))$ is a scroll under $\phi_{\tau} : X \to X'$ over a normal threefold. Finally, in the latter case, we are in case (d.4) by \cite[Prop.~14.1.3 and Prop.~3.2.1]{bs}.
\end{proof}

\section{Generalities on globally generated bundles}

We collect here some notation and results about globally generated bundles, that will be used throughout.

\begin{defi}
\label{notgg}
Let $\E$ be a rank $r$ globally generated vector bundle on $X$. We define the map determined by $\E$ as
$$\Phi=\Phi_{\E} : X \to {\mathbb G}(r-1, \P H^0(\E)).$$
For any point $x \in X$, we will denote the fiber of $\Phi$ by
$$F_x = \Phi^{-1}(\Phi(x))$$
and we set $\phi(\E)$ for the dimension of the general fiber of $\Phi_{\E}$.  
\end{defi}
We recall that, considering the map $\lambda_{\E} : \Lambda^r H^0(\E) \to H^0(\det \E)$, one gets a commutative diagram
\begin{equation}
\label{muk}
\xymatrix{X \ar[d]^{\varphi_{|\Im \lambda_{\E}|}} \ar[r]^{\hskip -1.2cm \Phi_{\E}} &  {\mathbb G}(r-1,\P H^0(\E)) \ar@{^{(}->}[d]^{P_{\E}} \\ \P \Im \lambda_{\E} \ar@{^{(}->}[r] & \P \Lambda^r H^0(\E)}
\end{equation}
where $P_{\E}$ is the Pl\"ucker embedding. In particular this implies that 
\begin{equation}
\label{muk2}
\phi(\E)=n-\nu(\det \E)
\end{equation}
where $\nu(\det \E)$ is the numerical dimension of $\det \E$.

We will also use degeneracy loci.

\begin{defi}
Let $\varphi : \E \to \F$ be a morphism of vector bundles of ranks $e, f$ on $X$. For any $k \in \Z$ with $0 \le k \le \min\{e,f\}$ we denote the $k$-th degeneracy locus of $\varphi$ by $D_k(\varphi)$, that is
$$D_k(\varphi)=\{x \in X : \rk \varphi(x) \le k\}.$$
\end{defi}

\begin{remark}
\label{degeneracy}
Degeneracy loci have a natural scheme structure, given locally by the vanishings of the $(k+1) \times (k+1)$ minors of the matrix defining $\varphi(x)$. Equivalently \cite[\S (2.1)]{las}, the ideal sheaf of $D_k(\varphi)$ is the image of the morphism $\Lambda^{k+1} \E \otimes \Lambda^{k+1} \F^* \to \O_X$ induced by $\Lambda^{k+1} \varphi$.
\end{remark}

We observe that, in the cases we are interested about, degeneracy loci move on $X$.

\begin{lemma}
\label{mov}
Let $\E$ be a globally generated rank $r$ vector bundle on $X$ with $c_k(\E) \ne 0$, for some $k \ge 1$. Let $x \in X$ be a general point. Suppose that either $k=r$ or $k \le r-1$ and $H^1(\O_X)=0$. Then there is a morphism $\varphi :  \O_X^{\oplus (r+1-k)} \to \E$ such that $D_{r-k}(\varphi)$ is reduced of pure codimension $k$ and $x \in D_{r-k}(\varphi)$.
\end{lemma}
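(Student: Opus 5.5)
We prove the statement by applying the standard Bertini-type theorem for degeneracy loci of general morphisms from a trivial bundle to a globally generated bundle, and then moving the locus through the point $x$ with a suitable automorphism or translation. Let $V=H^0(\E)$. Since $\E$ is globally generated, a general morphism $\varphi:\O_X^{\oplus(r+1-k)}\to\E$, i.e. a general choice of $r+1-k$ sections in $V$, has $D_{r-k}(\varphi)$ either empty or of pure codimension $k$. Outside a locus of codimension $\ge 2k+4$ (the expected codimension of $D_{r-k-1}$), this locus is moreover smooth, hence reduced. This is the Kleiman--Bertini transversality statement, e.g. \cite[Thm.~14.4 and Ex.~14.3.2]{fu}, or via the Grassmannian of quotients. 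Its class is $c_k(\E-\O^{r+1-k})=c_k(\E)\ne 0$, so it is nonempty. It is also Cohen--Macaulay, being of the expected codimension, and generically reduced, hence reduced. So the only remaining issue is to make the locus pass through the chosen general point $x$.

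For this, consider the incidence
\[
I=\{(\varphi,y)\in \mathrm{Hom}(\O_X^{\oplus(r+1-k)},\E)\times X \;:\; y\in D_{r-k}(\varphi)\}.
\]
Let $U$ be the open dense set of $\varphi$ for which $D_{r-k}(\varphi)$ is reduced of pure codimension $k$. We need the projection $I_U\to X$ to be dominant. Since $X$ is irreducible, it suffices to show that the union of the loci $D_{r-k}(\varphi)$, for $\varphi\in U$, is not contained in a proper closed subset. Fix $y\in X$. The fiber of $I\to X$ over $y$ is the set of $\varphi$ with $\varphi(y)$ of rank $\le r-k$, which is a closed subset of codimension $k$ in the Hom space; this uses global generation, which makes evaluation at $y$ surjective onto $\mathrm{Hom}(\C^{r+1-k},\E_y)$. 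Hence $I$ is irreducible of dimension $\dim\mathrm{Hom}+n-k$. Its projection to the Hom space is surjective onto the closure of $U$, with general fiber of dimension $n-k$, so $I_U$ is dense in $I$. Since $I\to X$ is surjective with equidimensional fibers of codimension $k$, $I_U$ has dense image in $X$, and a general $x$ lies in some $D_{r-k}(\varphi)$ with $\varphi\in U$.

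The hypotheses on $k$ versus $r$ and on $H^1(\O_X)$ enter only in checking that the fiber of $I\to X$ over $y$ meets $U$ for general $y$. Equivalently, they guarantee that the "bad" locus of $\varphi$ does not contain the whole fiber over $y$. The case $k=r$ is the case of a single section, where the zero locus of a general section vanishing at $x$ is handled directly. I expect this to be a routine irreducibility count, which the argument above already provides. If subtleties arise, for instance nonreducedness along the fiber over $y$, I would use $H^1(\O_X)=0$ as in the Hartshorne--Serre style constructions of \cite{blv}, so that the sections considered extend as needed. The main obstacle is making the dimension count of $I$ precise. Once it is in place, the statement follows.
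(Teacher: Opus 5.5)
Your argument is correct, but for $k<r$ it takes a different route from the paper's.

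\textbf{The paper's route.} For $k=r$ the paper runs exactly your incidence count, on $X\times\P H^0(\E)$. There the fibers over $X$ are the linear spaces $\P H^0(\I_{\{y\}/X}\otimes\E)\cong\P^{t-r-1}$. For $k\le r-1$ it does not set up an incidence on $\mathrm{Hom}(\O_X^{\oplus(r+1-k)},\E)$. Instead it proceeds in four steps:
\begin{itemize}
\item It picks $k'\ge k$ with $c_{k'}(\E)\ne 0$ and $c_{k'+1}(\E)=0$.
\item It quotients $\E$ by $r-k'$ general sections to get a rank-$k'$ bundle $\F$. This is a bundle because $c_{k'+1}(\E)=0$ forces $D_{r-k'-1}=\emptyset$.
\item It applies the top case to $\F$ to get $\tau\in H^0(\F)$ vanishing at $x$, and lifts $\tau$ to $H^0(\E)$. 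This lifting is the only place $H^1(\O_X)=0$ is used.
\item It enlarges the resulting $V'$ to a general $V$, using $D_{r-k'}(\varphi')\subset D_{r-k}(\varphi)$.
\end{itemize}

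\textbf{Your route.} You work uniformly in $k$. The fiber of $I$ over $y$ is the preimage, under the surjective evaluation map, of the irreducible codimension-$k$ determinantal variety of non-injective maps $\C^{r+1-k}\to\E_y$. The set $I_U$ is nonempty by Porteous, since $c_k(\E)\ne 0$. Density of $I_U$ in the irreducible $I$ then gives a dense constructible image in $X$.

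\textbf{Comparison.} Your route never uses $k=r$ or $H^1(\O_X)=0$, so it proves the lemma without that hypothesis and avoids the auxiliary index $k'$. The paper's route only ever handles zero loci of single sections, where the incidence fibers are projective spaces, but it pays for this with the extra hypothesis.

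\textbf{What to fix.} Your last paragraph is mistaken. The hypotheses do not enter anywhere: that the fiber over a general $y$ meets $U$ is exactly what your density argument already proves. You should state that $H^1(\O_X)=0$ is superfluous, rather than appeal vaguely to extending sections. Two smaller points:
\begin{itemize}
\item The expected codimension of $D_{r-k-1}(\varphi)$ is $2(k+1)=2k+2$, not $2k+4$. This is harmless, since $2k+2>k$.
\item Irreducibility of $I$ needs justification beyond irreducible equidimensional fibers, because $I\to X$ is not closed. Use local triviality: the evaluation $H^0(\E)\otimes\O_X\to\E$ is a surjection of bundles, so locally $I$ is a product of an open set in $X$, a vector space and the determinantal variety. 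Alternatively, projectivize the cone fibers and use properness.
\end{itemize}
Surjectivity of $I\to X$ is immediate, since $\varphi=0$ lies in every fiber.
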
 
\begin{proof}
Since $c_k(\E) \ne 0$, we have that $k \le \min\{r,n\}$. When $k=r$ we will call it the top case. Suppose first that we are in the top case and consider the incidence correspondence 
$$\I = \{(y, [\sigma]) \in X \times \P H^0(\E): y \in Z(\sigma)\} \subset X \times \P H^0(\E).$$
Set $t=h^0(\E)$. Since $\E$ is globally generated, we have that $t \ge r+1$, for otherwise $\E \cong \O_X^{\oplus r}$, contradicting $c_r(\E) \ne 0$. Also, for every $y \in X$, we get that $h^0(\I_{\{y\}/X} \otimes \E)=t-r$ and every fiber of the first projection $\pi_1 : \I \to X$ is isomorphic to $\P H^0(\I_{\{y\}/X} \otimes \E)=\P^{t-r-1}$. Hence $\pi_1$ is surjective and $\I$ is irreducible of dimension $n+t-r-1$. Now, the second projection $\pi_2 : \I \to \P H^0(\E) = \P^{t-1}$ has general fibers isomorphic to $Z(\sigma)$, which is reduced of pure codimension $r$ by \cite[Lemmas 4.1 and 3.5]{blv}. Therefore also $\pi_2$ is surjective and it follows that we can find a general $\sigma \in H^0(\E)$ with $x \in Z(\sigma)=D_0(\varphi)$, where $\varphi :  \O_X^{\oplus (r+1-k)} \to \E$ is the morphism associated to $\sigma$. Thus, the top case is proved.

Now assume that $k \le r-1$ and let $k'= \min\{l \ge k : c_{l+1}(\E)=0\}$, so that $c_{k'}(\E) \ne 0, c_{k'+1}(\E)=0$ and $k' \le r$. Let $V' \subset H^0(\E)$ be a general subspace with $\dim V'=r+1-k'$. Choose a general subspace $V$ with $V' \subset V \subset H^0(\E)$ and $\dim V=r+1-k$. Then we have morphisms $\varphi' : V' \otimes \O_X \to \E, \varphi : V \otimes \O_X \to \E$, and clearly $D_{r-k'}(\varphi') \subset D_{r-k}(\varphi)$. Hence we will be done if we prove that
\begin{equation}
\label{mob}
\hbox{for a general subspace} \ V' \ \hbox{as above, we have that} \ x \in D_{r-k'}(\varphi'). 
\end{equation}
In fact, \eqref{mob} implies that $D_{r-k}(\varphi)$ is nonempty, hence reduced of pure codimension $k$ by \cite[Statement (folklore)(i), \S 4.1]{ba} and $x \in D_{r-k}(\varphi)$. 

We now prove \eqref{mob}. If $k'=r$ we are done by the top case. Next, assume that $k' \le r-1$. Choose a general subspace $V'' \subset H^0(\E)$ with $\dim V''=r-k'$. We get a morphism $\varphi'' : V'' \otimes \O_X \to \E$ and an exact sequence
$$0 \to V'' \otimes \O_X \to \E \to \F \to 0$$
where $\F$ is a rank $k'$ vector bundle on $X$, since $c_{k'+1}(\E)=0$, hence $D_{r-k'-1}(\varphi'')= \emptyset$ by \cite[Lemma 4.1]{blv}. Moreover, since $H^1(\O_X)=0$, we have an exact sequence
$$0 \to V'' \to H^0(\E) \mapright{\alpha} H^0(\F) \to 0.$$
Now, $c_{k'}(\F)=c_{k'}(\E) \ne 0$, hence, by the top case applied to $\F$, we can find a general $\tau \in H^0(\F)$ with $x \in Z(\tau)$. Now let $\sigma \in H^0(\E)$ be such that $\alpha(\sigma)=\tau$
and let $V'=\langle V'', \sigma \rangle \subset H^0(\E)$. We have by construction that $Z(\tau)=D_{r-k'}(\varphi')$  and we are done.
\end{proof}

In the sequel, we will be interested in vanishing of some Chern classes. We first show the following, probably well-known, fact.

\begin{lemma}
\label{eff}
Let $\E$ be a globally generated vector bundle on $X$. Then the classes 
$\prod_{i=1}^kc_i(\E)^{m_i}$
are effective for all integers $k \ge 1, m_i \ge 1, 1 \le i \le k$.
\end{lemma}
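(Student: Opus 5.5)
The plan is to realize each Chern class $c_i(\E)$ as the class of a degeneracy locus, and then to intersect $m_i$ general translates of these loci. The point is that all the loci can be chosen in general position simultaneously, so the product class is represented by an effective cycle.

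First I fix the representatives. Since $\E$ is globally generated, for a general subspace $V_i \subset H^0(\E)$ of dimension $r-i+1$, the evaluation morphism $\varphi_i : V_i \otimes \O_X \to \E$ has degeneracy locus $D_{r-i}(\varphi_i)$ that is either empty or reduced of pure codimension $i$. This is \cite[Statement (folklore)(i), \S 4.1]{ba}, already used in Lemma \ref{mov}. By the Thom--Porteous formula (see \cite[\S 14]{fu}), its class is $c_i(\E)$. In particular $c_i(\E)$ is effective, or zero, in which case the product vanishes and is trivially effective.

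To take products, I will use the Kleiman-type transversality available for globally generated bundles. Consider the Grassmannian $G=\mathbb G(r-i, \P H^0(\E))$ of subspaces $V_i$. The incidence $\{(x,V_i): \rk \varphi_i(x)\le r-i\}\subset X\times G$ is a fibration over $X$ whose fibers are all isomorphic. This holds because the evaluation $H^0(\E)\to \E_x$ is surjective for every $x$, so the fiber over $x$ is a fixed Schubert-type locus of codimension $i$ in $G$. Choose independent general subspaces $V_{i,j}$ for $1\le i\le k$, $1\le j\le m_i$. I then argue by induction on the number of factors. If $W$ is any subvariety (or cycle) of $X$ and $V$ is general, then $D(\varphi_V)\cap W$ has pure codimension $i$ in $W$, or is empty, and its class is $c_i(\E)\cdot [W]$. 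To see this, apply the same incidence argument with $X$ replaced by $W$, using that $\E_{|W}$ is still globally generated by $H^0(\E)$, together with \cite[Thm.~14.4]{fu}, which says that when the expected codimension is attained, the degeneracy class computes the Chern polynomial capped with $[W]$. Iterating shows that $\bigcap_{i,j} D_{r-i}(\varphi_{i,j})$ has the expected dimension. Its fundamental cycle, with nonnegative multiplicities, then represents $\prod_i c_i(\E)^{m_i}$, so the product is effective, or zero if the intersection is empty or the codimension exceeds $n$.

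The main obstacle is the refined intersection step: making sure that capping $c_i(\E)$ with a possibly reducible cycle $[W]$ is represented by the degeneracy locus restricted to $W$, with nonnegative multiplicities. I would handle this componentwise on $W$, applying the dimension count of the incidence variety to each irreducible component. An alternative is the general statement of \cite[Ex.~12.1.7]{fu}: for globally generated $\E$, the classes $c_i(\E)\cap[W]$ are represented by nonnegative cycles. This gives the induction directly, since each step replaces an effective cycle $\alpha$ by $c_i(\E)\cap\alpha$, which is again effective by linearity over the components of $\alpha$.
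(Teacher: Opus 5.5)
Your proposal is correct and is essentially the paper's proof. The paper inducts on $\sum m_i$, writes the effective class as $\sum_h a_h[W_h]$ with $W_h$ integral, and combines the projection formula with Fulton's result that Chern classes of the globally generated bundle $\E_{|W_h}$ on $W_h$ are effective. That is exactly your ``alternative''; your main route just reproves this effectivity on each $W_h$ via a general degeneracy locus meeting $W_h$ properly and Fulton's Theorem 14.4, and the cycle it produces is the localized Porteous cycle, which has positive multiplicities even if it is not literally the scheme-theoretic ``fundamental cycle'' you mention, so effectivity still holds.
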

\begin{proof}
We argue by induction on $m:= \sum_{i=1}^k m_i$. If $m=1$, the claim is known. For the inductive step, we consider the class $c_j(\E) \cdot \prod_{i=1}^kc_i(\E)^{m_i}$ for some integer $j \ge 1$ and we can assume that 
$\alpha:= \prod_{i=1}^kc_i(\E)^{m_i}$ is effective. If $\alpha=0$, then $c_j(\E)\alpha=0$ is effective. Otherwise, we have that $\alpha=\sum_{h=1}^t a_h[W_h]$, where $a_h \ge 1$ and each $W_h$ is an integral codimension $m$ subvariety. Since $c_j(\E) \cdot \alpha=\sum_{h=1}^t a_h c_j(\E) \cdot [W_h]$, it is enough to show that $c_j(\E) \cdot [W_h]$ is effective for each $h$. To this end, set $W=W_h$ and let $i : W \hookrightarrow X$ be the inclusion. Since $i^*\E$ is globally generated, we have by \cite[Ex.~14.3.2(d)]{fu} that $c_j(i^*\E)$ is effective on $W$, hence so is $i_*(c_j(i^*\E))$ on $X$. Now, the projection formula \cite[Thm.~3.2(c)]{fu} gives 
$$i_*(c_j(i^*\E))=i_*(c_j(i^*\E)\cdot [W])=c_j(\E)\cdot i_*([W])=c_j(\E)\cdot [W]$$
hence our claim.
\end{proof}

\begin{lemma}
\label{c3no}
Let $\E$ be a globally generated bundle on $X$. We have:
\begin{itemize}
\item [(i)] If $c_1(\E)^4 = 0$, then $c_1(\E)^2c_2(\E)=c_1(\E)c_3(\E)=c_4(\E)=c_2(\E)^2=0$.
\item [(ii)] If $c_2(\E)^2=0$, then $c_1(\E)c_3(\E)=0$.
Moreover, if $c_3(\E) \ne 0$, then $c_1(\E)^4=c_1(\E)^2c_2(\E)=c_4(\E)=0$.
\end{itemize}
\end{lemma}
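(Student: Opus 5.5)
The whole argument rests on Lemma \ref{eff}. All monomials in the Chern classes of $\E$ are effective, and $H$ is ample. So a monomial $\alpha$ of codimension $k$ vanishes as soon as $\alpha \cdot H^{n-k} = 0$. If $\alpha \ne 0$, it is a positive combination of classes of subvarieties of codimension $k$, and each such class has positive degree. More useful for us: if $\alpha = \beta\gamma$ with $\beta,\gamma$ effective and $\alpha = 0$, then every further product $\alpha\delta$ with $\delta$ effective is $0$. We also use the standard fact for globally generated bundles that $c_1^{a}c_2^{b}\cdots - (\text{another effective class})$ is effective in suitable cases, namely the positivity of Schur polynomials (Fulton--Lazarsfeld). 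By Fulton--Lazarsfeld, for globally generated (hence nef) $\E$, every Schur polynomial $s_\lambda(c(\E))$ is numerically nonnegative on every subvariety. In particular the following classes are nonnegative when intersected with powers of $H$ (or restricted to subvarieties):
$$c_1^2-c_2,\qquad c_1c_2-c_3,\qquad c_2^2-c_1c_3,\qquad c_1^4-\cdots,$$
and also $c_1c_3 - c_4$ and the like.

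For (i), assume $c_1^4=0$. Then $c_1^4\cdot H^{n-4}=0$. Write $c_1^4 = c_1^2(c_1^2-c_2)+c_1^2c_2$. The term $c_1^2c_2$ is effective by Lemma \ref{eff}. The term $c_1^2(c_1^2-c_2)$ has nonnegative degree, since $s_{(1,1)}$ or $s_{(2)}$ positivity is applied on the cycle representing $c_1^2$ (effective). Hence $c_1^2c_2\cdot H^{n-4}=0$, and since $c_1^2c_2$ is effective, $c_1^2c_2=0$. Similarly $c_1c_3$ and $c_4$ are dominated in degree by $c_1^2c_2$ via the Schur positivity $\deg(c_1c_2-c_3)\cdot c_1\ge 0$ and the Fulton--Lazarsfeld inequalities between monomials: every degree-4 Chern monomial $P$ satisfies $0\le \deg P\le \deg c_1^4$. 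This is the cleanest route. All weighted degree-$4$ monomials in the Chern classes of a nef bundle satisfy $0\le \int P\cdot h \le \int c_1^4\cdot h$ for $h$ a product of ample classes, because $c_1^4-P$ is a nonnegative combination of Schur polynomials. Thus $c_1^4=0$ forces $c_1^2c_2,c_1c_3,c_4,c_2^2$ to have zero degree, and they are effective, hence zero.

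For (ii), assume $c_2^2=0$. The class $c_2^2-c_1c_3=s_{(2,2)}$ (up to the usual convention) is a Schur polynomial, hence has nonnegative degree. So $\deg c_1c_3\le \deg c_2^2=0$, and effectivity gives $c_1c_3=0$. If moreover $c_3\ne0$, then $c_3$ is a nonzero effective class. Since $H$ is ample, $c_1c_3=0$ with $c_1$ nef forces, on each component $W$ of the cycle representing $c_3$, $(\det\E)_{|W}\equiv 0$ numerically in the degree sense. A globally generated bundle with trivial degree of determinant on a curve is trivial, so $\E_{|W}$ is trivial. I would then deduce $c_1^4=0$ as follows: by the Fulton--Lazarsfeld inequality $\deg(c_1^4)$ is controlled... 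This part is the main obstacle. A more robust path is to use the degeneracy locus $D=D_{r-3}(\varphi)$ of Lemma \ref{mov} representing $c_3$ and argue geometrically: $\det\E$ is numerically trivial on $D$, so $\Phi_\E$ contracts $D$. Because Lemma \ref{mov} lets $D$ pass through a general point, the fibers of $\Phi_\E$ have dimension at least $n-3$. By \eqref{muk2}, $\nu(\det\E)\le 3$, that is, $c_1^4=0$. Then (i) gives $c_1^2c_2=c_4=0$. The hypothesis $H^1(\O_X)=0$ in Lemma \ref{mov} needs care. I would instead use the incidence argument directly on the family of sections to make $D$ cover $X$, or else reduce via a general quotient of rank $3$ as in the proof of Lemma \ref{mov}, where only covering is needed.
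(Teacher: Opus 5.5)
Your proposal is correct and follows essentially the same route as the paper: Schur-polynomial positivity (Fulton--Lazarsfeld, Demailly--Peternell--Schneider) together with the effectivity of Chern monomials from Lemma \ref{eff} gives (i) and the first half of (ii), and then a cycle of class $c_3(\E)$ through a general point, on which $\det \E$ must be trivial because $c_1(\E)c_3(\E)=0$, forces $c_1(\E)^4=0$. The paper settles your caveat about Lemma \ref{mov} with your second option, after first cutting down to the fourfold $X_4$: it splits off a general trivial subbundle of rank $r-3$ (this needs $c_4=0$, which follows from $c_1c_3-c_4\ge 0$ and $c_1c_3=0$) and applies the top case of Lemma \ref{mov}, which does not require $H^1(\O_X)=0$, to the rank-$3$ quotient; your first option, a direct incidence argument over the Grassmannian of $(r-2)$-dimensional subspaces of $H^0(\E)$, would also work.
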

\begin{proof}
Let $H$ be a very ample line bundle on $X$ and let $r$ be the rank of $\E$. To prove (i), we can assume that $n \ge 4$, otherwise (i) clearly holds. Restricting to $X_4=H_1\cap \ldots \cap H_{n-4}$, for $H_i \in |H|$ general divisors, we get a globally generated bundle $\E':=\E_{|X_4}$ on $X_4$ with $c_1(\E')^4=c_1(\E_{|X_4})^4=c_1(\E)^4H^{n-4}=0$. It follows from \cite[Cor.~2.7]{dps} that $c_1(\E')^2c_2(\E')=c_1(\E')c_3(\E')=c_2(\E')^2=c_4(\E')=0$. Thus, 
$$c_1(\E)^2c_2(\E)H^{n-4}=c_1(\E)c_3(\E)H^{n-4}=c_2(\E)^2H^{n-4}=c_4(\E)H^{n-4}=0$$
hence (i) follows from Lemma \ref{eff}. Next, to prove (ii), the conclusion being obvious if $n \le 3$, we assume that $n \ge 4$ and that $c_2(\E)^2=0$. With the above notation, we have that $c_2(\E')^2=0$.
Consider the Schur polynomial $s_{(2,2,0,0)}(\E')=c_2(\E')^2-c_1(\E')c_3(\E')=-c_1(\E')c_3(\E')$. 
We have by \cite[Thm.~2.5]{dps} that $s_{(2,2,0,0)}(\E') \ge 0$, hence $c_1(\E')c_3(\E') \le 0$. On the other hand, $c_1(\E')c_3(\E') \ge 0$, since $\det \E'$ is globally generated.
Therefore $c_1(\E')c_3(\E')=0$ and, as in the proof of part (i), we get that $c_1(\E)c_3(\E)=0$.
Now, assume that $c_3(\E) \ne 0$, so that $r \ge 3$ and $c_3(\E')\ne 0$: In fact if $c_3(\E')=0$, then $c_3(\E)H^{n-4}=0$, hence $c_3(\E)=0$ by Lemma \ref{eff}. We first show that there is a rank $3$ globally generated bundle $\G$ on $X'$ such that $c_3(\G)=c_3(\E')$.
In fact, if $r=3$, we just set $\G=\E'$. If $r \ge 4$, let $V' \subset H^0(\E')$ be a general subspace of dimension $r-3$ and let $\varphi' : V' \otimes \O_{X'} \to \E'$ be the associated morphism. Since $c_4(\E')=0$, we have that 
$D_{r-4}(\varphi')=\emptyset$ by \cite[Lemma 4.1]{blv}. Hence $\rk \varphi'(x) = r-3$ for every $x \in X'$ and we get an exact sequence
\begin{equation}
\label{suc1}
0 \to V' \otimes \O_{X'} \to \E' \to \G \to 0
\end{equation}
where $\G$ is a rank $3$ bundle on $X'$. It follows that $\G$ is globally generated and $c_3(\G)=c_3(\E')$. In particular, $c_3(\G) \ne 0$ and Lemma \ref{mov} implies that, given a general point $x \in X'$, we can find a general $\sigma \in H^0(\G)$ with $x \in Z(\sigma)$. On the other hand, we have by \cite[Lemma 4.1]{blv} that $[Z(\sigma)] = c_3(\G) = c_3(\E')$ and therefore $\det \E' \cdot Z(\sigma)=c_1(\E')c_3(\E')=0$. Since $\det \E'$ is globally generated, this means that the curve $Z(\sigma)$ is contracted by the morphism $\varphi_{\det \E'} : X' \to \P H^0(\det \E')$. Therefore, the general fiber of $\varphi_{\det \E'}$ has positive dimension and this implies that $c_1(\E')^4=0$, hence $c_1(\E)^4=0$. Finally, we conclude by applying (i).
\end{proof} 

\begin{remark}
Even in the Ulrich case, Lemma \ref{c3no}(ii) is sharp, in the sense that there are Ulrich bundles $\E$ (hence, in particular, globally generated bundles) with $c_2(\E)^2=c_3(\E)=0$ and $c_1(\E)^n > 0$. See Lemma \ref{pf} or \cite[Ex.~10.1]{blv}.
\end{remark}

\section{Generalities on Ulrich vector bundles}

We will often use the following well-known properties of Ulrich bundles

\begin{lemma}
\label{ulr}
Let $\E$ be a rank $r$ Ulrich bundle on $X \subset \P^N$. We have:
\begin{itemize}
\item[(i)] $\E$ is globally generated and aCM. 
\item[(ii)] $\det \E$ is globally generated and it is not trivial, unless $(X,\O_X(1), \E) = (\P^n, \O_{\P^n}(1), \O_{\P^n}^{\oplus r})$.
\item [(iii)] If $(X,\O_X(1))=(\P^n, \O_{\P^n}(1))$, then $\E=\O_{\P^n}^{\oplus r}$.
\item [(iv)] $\E_{|X_{n-1}}$ is Ulrich on a smooth hyperplane section $X_{n-1}$ of $X$.
\item[(v)] $c_1(\E) H^{n-1}=\frac{r}{2}[K_X+(n+1)H] H^{n-1}$.
\item[(vi)] If $n \ge 2$, then $c_2(\E) H^{n-2}=\frac{1}{2}[c_1(\E)^2-c_1(\E) K_X] H^{n-2}+\frac{r}{12}[K_X^2+c_2(X)-\frac{3n^2+5n+2}{2}H^2] H^{n-2}$.
\item[(vii)] If $n \ge 3$, then

$\begin{aligned}[t] 
c_3(\E) H^{n-3} = & c_1(\E)c_2(\E)H^{n-3}-\frac{1}{3}c_1(\E)^3H^{n-3}+\frac{1}{2}[c_1(\E)^2-2c_2(\E)]K_X H^{n-3} 
\\ & -\frac{1}{6}c_1(\E)[K_X^2+c_2(X)]H^{n-3}+\frac{r}{12}K_Xc_2(X)H^{n-3}+\frac{rn(n+1)^2d}{24}.
\end{aligned}$
\item[(viii)] $\E^*(K_X+(n+1)H)$ is Ulrich on $X$.
\end{itemize} 
\end{lemma}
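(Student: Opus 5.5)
Most of the items in Lemma \ref{ulr} are standard consequences of the Beilinson-type characterization of Ulrich bundles, so the plan is to recall the basic characterization and derive each item, citing the literature (e.g. \cite{es, be}) for the routine steps.

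For (i), an Ulrich bundle is $0$-regular in the Castelnuovo--Mumford sense, because $H^i(\E(-i))=0$ for $1 \le i \le n$. Mumford's lemma then gives global generation. For aCM, I show $H^i(\E(t))=0$ for $1\le i\le n-1$ and all $t$. I induct on $n$ using (iv) and the restriction sequences $0\to \E(t-1)\to \E(t)\to \E_{|X_{n-1}}(t)\to 0$: regularity handles $t\ge -i$, and Serre duality plus (viii) handle the negative range.

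For (iv), the vanishings pass to the hyperplane section from the sequence above, since $H^{i}(\E(-p))=H^{i+1}(\E(-p-1))=0$ in the required ranges. For (viii), apply Serre duality: $H^i(\E^*(K_X+(n+1)H)(-p))\cong H^{n-i}(\E(p-n-1))^*$, and as $p$ ranges over $1,\dots,n$, the twist $p-n-1$ ranges over $-n,\dots,-1$.

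For (ii), $\det\E$ is a quotient of $\Lambda^r$ of a trivial bundle, so it is globally generated. If $\det\E$ is trivial, then $\E$ is globally generated with trivial determinant, hence $\E\cong\O_X^{\oplus r}$. Then $H^n(\O_X(-n))=0$ together with the Ulrich Hilbert polynomial $\chi(\E(t))=rd\binom{t+n}{n}$ forces $\chi(\O_X(t))=d\binom{t+n}{n}$. This gives $d=1$, i.e. $X=\P^n$, and (iii) then follows from Horrocks' criterion, since an aCM bundle on $\P^n$ splits.

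For (v)--(vii), the key identity is $\chi(\E(t))=\frac{rd}{n!}(t+1)\cdots(t+n)$, which holds because the polynomial vanishes at $t=-1,\dots,-n$ and has leading coefficient $rd/n!$. Restricting to the curve $X_1$, the surface $X_2$ and the threefold $X_3$ (using (iv)) and comparing with Riemann--Roch on each gives (v), (vi) and (vii). These are routine coefficient comparisons, using the adjunction formulas for $K_{X_i}$ and $c_2(X_i)$, and on $X_3$ additionally Todd class terms and $\chi(\O_{X_3})$.

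The main obstacle is (vii). Eliminating $\chi(\O_{X_3})=-\frac1{24}K_{X_3}c_2(X_3)$ and rewriting the threefold invariants in terms of $K_X$, $c_2(X)$ and $H$ on $X$ is bookkeeping-heavy. I would take this step from the literature (e.g. \cite{cmrpl}) rather than redo it in full.
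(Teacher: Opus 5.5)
Your proposal is correct and is essentially the paper's argument. The paper cites the literature for (i)--(vi) and (viii), and your arguments (Castelnuovo--Mumford regularity, Serre duality, restriction sequences, Riemann--Roch on $X_1$ and $X_2$ with adjunction) are the standard proofs behind those citations. For (vii) the paper does what you outline: it restricts to $X_3$, uses the threefold Ulrich formula of \cite{bmpt}, computes $c_2(X_3)$ inductively from the normal bundle sequences, and simplifies with (v), (vi) and $\chi(\O_{X_3})=-\frac{1}{24}K_{X_3}c_2(X_3)$. The induction on $n$ in your aCM step is superfluous, since $0$-regularity of $\E$ and of $\E^*(K_X+(n+1)H)$ together with Serre duality already gives $H^i(\E(t))=0$ for all $t$ and $1 \le i \le n-1$.
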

\begin{proof}
For (i)-(ii) and (v)-(vi), see for example \cite[Lemma 3.1]{lr1}, for (iii)-(iv) see \cite[(3.1), (3.4)]{be}, for (viii) see \cite[Lemma 5.1(ix)]{blv}. As for (vii), we have $K_{X_i}={K_X}_{|X_i}+(n-i)H_{|X_i}$. Moreover, using the exact sequences, for $3 \le i \le n-1$, 
$$0 \to T_{X_i} \to (T_{X_{i+1}})_{|X_i} \to H_{|X_i} \to 0$$
we get by induction that 
$$c_2(X_i)=c_2(X)_{|X_i}+(n-i) {K_X}_{|X_i}H_{|X_i} + \frac{(n-i+1)(n-i)}{2}H^2_{|X_i}$$
hence
\begin{equation}
\label{c2x3}
c_2(X_3)=c_2(X)_{|X_3}+(n-3) {K_X}_{|X_3}H_{|X_3} + \frac{(n-2)(n-3)}{2}H^2_{|X_3}.
\end{equation}
Now (vii) follows from \cite[Prop.~3.7(b)]{bmpt}, \eqref{c2x3}, (v), (vi) and Riemann-Roch $\chi(\O_{X_3})=-\frac{1}{24}K_{X_3}c_2(X_3)$.
\end{proof}

\begin{defi}
\label{not4}
Let $\E$ be a vector bundle on $X \subset \P^N$. We say that $(X,\O_X(1),\E)$ is a {\it linear Ulrich triple} if there are a smooth irreducible variety $B$ of dimension $b \ge 1$, a very ample vector bundle $\F$ and a vector bundle $\G$ on $B$ such that 
\begin{equation}
\label{trip}
(X,\O_X(1),\E)=(\P(\F), \O_{\P(\F)}(1), \pi^*(\G(\det \F)))
\end{equation}
where $\pi: X \cong \P(\F) \to B$ is the projection and 
\begin{equation}
\label{van-trip}
H^j(\G \otimes S^k \F^*)=0 \ \hbox{for all} \ j \ge 0, 0 \le k \le b-1.
\end{equation}
\end{defi}
Note that when $(X,\O_X(1),\E)$ is a linear Ulrich triple, then $\E$ is an Ulrich bundle on $X$ by \cite[Lemma 4.1]{lo}. 

Next, we recall some definitions and facts in \cite{lr1}.

\begin{defi}
\label{usv}
Let $n \ge 2$ and $d \ge 2$. Let $\E$ be a rank $r \ge 2$ Ulrich bundle on $X \subset \P^N$. Let $V \subset H^0(\E)$ be a subspace of dimension $r-1$ such that, if $\varphi : V \otimes \O_X \to \E$ is the associated morphism and $Z=D_{r-2}(\varphi)$, then $Z$ satisfies the following conditions (in particular these hold, by \cite[Statement (folklore)(i), \S 4.1]{ba}, if $V$ is a general subspace of $H^0(\E)$):
\begin{itemize}
\item[(a)] $Z$ is either empty or of pure codimension $2$,
\item[(b)] if $Z \ne \emptyset$ and either $r=2$ or $n \le 5$, then $Z$ is smooth (possibly disconnected),
\item[(c)] if $Z \ne \emptyset$ and $n \ge 6$, then $Z$ is either smooth or is normal, Cohen-Macaulay, reduced and with $\dim \Sing(Z) = n-6$.
\end{itemize}
We call $Z$ {\it an Ulrich subvariety associated to $\E$}. We say that $Z$ {\it is general} if $V$ is a general subspace of $H^0(\E)$.
\end{defi}

\begin{remark}
\label{usv2}
Let $Z \subset X$ be any subvariety satisfying (a)-(c) above and (i)-(vi) of \cite[Thm.~1]{lr1}. It follows from \cite[Thm.~1]{lr1} that there is an Ulrich bundle $\E$ such that $Z$ is associated to $\E$.
\end{remark}

We include two useful lemmas on the $q$-ampleness of the determinant of an Ulrich bundle.

\begin{lemma} 
\label{q}
Let $\E$ be an Ulrich bundle on $X \subset \P^N$. Then $\det \E$ is not $q$-ample if and only if $X$ contains a linear space $M \subset \P^N$ with $\dim M = q+1$ and $\E_{|M}$ is trivial.
\end{lemma}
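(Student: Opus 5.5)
Since $\det \E$ is globally generated (Lemma \ref{ulr}(ii)), Remark \ref{somm} (applied to the line bundle $\det \E$) says that $\det \E$ is $q$-ample if and only if every fiber of the morphism $\varphi_{\det \E}$ has dimension at most $q$. So the plan is to show that $\varphi_{\det \E}$ has a fiber of dimension at least $q+1$ if and only if $X$ contains a linear space $M$ of dimension $q+1$ with $\E_{|M}$ trivial. By diagram \eqref{muk}, the morphism given by the full linear system $|\det \E|$ factors the map $\Phi_{\E}$ followed by the Plücker embedding. The map $\varphi_{|\Im \lambda_{\E}|}$ is then obtained from $\varphi_{\det \E}$ by a linear projection, and since $\det\E$ is globally generated this projection is defined everywhere on the image, hence finite. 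Therefore the fibers of $\varphi_{\det\E}$ and of $\Phi_{\E}$ have the same dimensions (their connected components correspond). So it suffices to characterize fibers of $\Phi_{\E}$ of dimension $\ge q+1$.

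For the direction ``$\Leftarrow$'': if $M\subset X$ is a linear space of dimension $q+1$ with $\E_{|M}$ trivial, then $\det \E_{|M}\cong \O_M$. Hence $\varphi_{\det\E}$ contracts $M$ to a point, giving a fiber of dimension $\ge q+1$, so $\det\E$ is not $q$-ample.

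For ``$\Rightarrow$'': take a fiber $F_x$ of $\Phi_{\E}$ of dimension $\ge q+1$. On $F_x$, the bundle $\E$ is a quotient of $H^0(\E)\otimes\O$ with constant kernel, so $\E_{|F_x}$ is trivial. The key step is to show that positive-dimensional fibers of $\Phi_{\E}$ for an Ulrich $\E$ are linear spaces in $\P^N$. I would invoke the known result (\cite{lr1}/\cite{lms1}-type, or argue directly) that $\Phi_{\E}$ restricted to any subvariety $Y\subset X$ with $\E_{|Y}$ trivial forces $Y$ to lie in a linear space contained in $X$. For a direct argument, take an irreducible component $Y$ of $F_x$ of dimension $m\ge q+1$. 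Restricting to a general linear section, we may cut $Y$ down to a curve $C$ with $\E_{|C}$ trivial. Then $\E^*(K_X+(n+1)H)$ is Ulrich (Lemma \ref{ulr}(viii)), hence globally generated, so its restriction $\O_C(K_X+(n+1)H)^{\oplus r}$ is globally generated. Combining this with $\det\E\cdot C=0$ and Lemma \ref{ulr}(v) on the appropriate section, I would try to conclude $\deg C=1$, i.e. $C$ is a line. Ulrich-ness of $\E_{|Y}$ pushed through the aCM resolution (or the characterization that Ulrich means $\pi_*\E$ is trivial for a linear projection $X\to\P^n$) should give that $Y$ maps isomorphically and linearly, so $\deg Y=1$. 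Hence $Y$ is a linear $\P^m$, and any linear $(q+1)$-dimensional subspace $M\subset Y$ works.

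The main obstacle is this linearity step. The cleanest route I would try first is the following. Let $\pi:X\to\P^n$ be a general linear projection; then $\pi_*\E\cong\O_{\P^n}^{\oplus rd}$. For a curve $C$ with $\E_{|C}$ trivial, the sections of $\E$ restrict to constants. On the other hand, $H^0(\E)\cong H^0(\pi_*\E)$ separates points in the same $\pi$-fiber as soon as those points lie on a contracted curve of degree $>1$. This forces $\pi_{|C}$ to be birational onto a line, hence $\deg C=1$. A subvariety all of whose curve sections through general points are lines is linear.
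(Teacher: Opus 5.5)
Your skeleton is the paper's. Remark~\ref{somm} turns non-$q$-ampleness of $\det\E$ into a fibre of $\varphi_{\det\E}$ of dimension $\ge q+1$. Diagram~\eqref{muk} puts that fibre inside a fibre $F_x$ of $\Phi_\E$; for this you do not even need finiteness of $f$, since $P_\E\circ\Phi_\E$ factors through $\varphi_{\det\E}$. Incidentally, base-point-freeness of $\Im\lambda_\E$ comes from global generation of $\E$, not of $\det\E$. Then $\E_{|F_x}$ is trivial, and everything rests on $F_x$ being a linear subspace of $\P^N$ contained in $X$. The paper gets this last fact simply by citing \cite[Lemma~2.10]{lms1}; with that precise citation your proof is complete.

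As written, however, you leave this step open, and the direct arguments you propose do not establish it. This is a genuine gap.

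\emph{First sketch.} Lemma~\ref{ulr}(v) is a global identity for $c_1(\E)H^{n-1}$ and says nothing about a particular curve $C$. Global generation of $\O_C(K_X+(n+1)H)$ only gives $(K_X+(n+1)H)\cdot C\ge 0$. Moreover, $\E_{|Y}\cong\O_Y^{\oplus r}$ is not Ulrich on $Y$ unless $Y$ is already linear. Nothing here bounds $\deg C$.

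\emph{Second sketch.} It is true that $H^0(\E)\to\bigoplus_{y\in\pi^{-1}(w)}\E_y$ is surjective for every finite linear projection $\pi$, since $\pi_*\E$ is trivial. So two distinct points of $F_x$ never share a $\pi$-fibre. But for general $\pi$ the restriction $\pi_{|C}$ is birational onto a curve of degree $\deg C$, and even injective if $n\ge 3$ and $C$ is smooth. This yields no contradiction and certainly does not force $\pi(C)$ to be a line. Special centres through a point of $\langle y,y'\rangle\setminus X$ only show that secant lines of $F_x$ lie in $X$, not in $F_x$; a conic in $F_x$ would merely put its plane inside $X$.

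\emph{A self-contained fix.} An Ulrich bundle, viewed as a sheaf on $\P^N$, has a linear presentation $\O_{\P^N}(-1)^{\oplus a}\xrightarrow{M}H^0(\E)\otimes\O_{\P^N}\to\E\to 0$ (Eisenbud--Schreyer).
\begin{itemize}
\item Tensoring with $k(y)$ gives $\ker(\mathrm{ev}_y)=\Im M(y)$ for $y\in X$, and $M(y)$ surjective for $y\notin X$.
\item Set $W=\ker(\mathrm{ev}_x)$. The locus $\{y\in\P^N:\Im M(y)\subseteq W\}$ is a linear subspace, because $M(y)$ depends linearly on $y$.
\item This locus lies in $X$, because $M(y)$ is not surjective at its points.
\item It equals $F_x$, because $\ker(\mathrm{ev}_y)$ and $W$ both have codimension $r$ in $H^0(\E)$.
\end{itemize}
Hence $F_x$ is a linear space in $X$ on which $\E$ is trivial, and any $(q+1)$-dimensional linear $M\subseteq F_x$ does the job.
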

\begin{proof}
Observe that $\det \E$ is globally generated by Lemma \ref{ulr}(ii). If $X$ contains a linear space $M \subset \P^N$ with $\dim M = q+1$ and $\E_{|M}$ is trivial, then $(\det \E)_{|M}$ is trivial, hence the morphism $\varphi_{\det \E} : X \to \P H^0(\det \E)$ has a fiber $F$ containing $M$. Therefore, $\dim F \ge q+1$ and $\det \E$ is not $q$-ample by Remark \ref{somm}. Vice versa, assume that $\det \E$ is not $q$-ample. Then, again by Remark \ref{somm}, $\varphi_{\det \E}$ has a fiber $M'$ with $\dim M' \ge q+1$. Now $\det(\E_{|M'})=(\det \E)_{|M'}$ is trivial, hence so is $\E_{|M'}$, since it is globally generated. Consider the map $\lambda_{\E} : \Lambda^r H^0(\E) \to H^0(\det \E)$ and the factorization 
$$\xymatrix{X \ar[dr]_{\varphi_{|\Im \lambda_{\E}|}} \ar[r]^{\hskip -.5cm \varphi_{\det \E}} & \varphi_{\det \E}(X) \ar[d]^f& \hskip -1.1cm \subset \P H^0(\det \E) \\ & \varphi_{|\Im \lambda_{\E}|}(X) & \hskip -1.6cm \subset |\Im \lambda_{\E}|}$$
where $f$ is a finite projection. It follows from \cite[Lemma 2.10]{lms1} and \eqref{muk} that the fibers of $\varphi_{|\Im \lambda_{\E}|}$ are linear spaces, hence in particular they are connected. Therefore $f$ is a bijection and the fibers of $\varphi_{|\Im \lambda_{\E}|}$ coincide with the fibers of $\varphi_{\det \E}$. Then, we have that $M'$ is a linear space in $\P^N$ contained in $X$ and choosing a linear space $M \subseteq M'$ with $\dim M = q+1$, we find that $\E_{|M}$ is trivial. 
\end{proof}

\begin{remark}
In the Ulrich case, one can recover the last statement of Remark \ref{somm} in the following way: If $\E$ is $q$-ample, then $\det \E$ is also $q$-ample, for otherwise, by Lemma \ref{q}, $X$ contains a linear space $M \subset \P^N$ with $\dim M = q+1$ and $\E_{|M}$ is trivial, contradicting \cite[Thm.~1]{lr2}.
\end{remark}

\begin{lemma}
\label{qa}
Let $X \subset \P^N$ be a smooth irreducible variety of dimension $n$, let $B$ be a smooth irreducible variety of dimension $m$ and let $\pi : X \to B$ be a flat morphism with connected fibers. Assume that $\Pic(F) \cong \Z \O_F(1)$ for every fiber $F$ of $\pi$. Then $\Pic(X) \cong \Z \O_X(1) \oplus \pi^* \Pic(B)$. 

Also, if $\E$ is an Ulrich bundle on $X$, we have:
\begin{itemize}
\item [(i)] If $\det \E$ is not $m$-ample, then $m \le \frac{n-1}{2}$ and there is a vector bundle $\H$ on $B$ such that $\E \cong \pi^*\H$.
\item [(ii)] If $\det \E$ is not $(m-1)$-ample, then $m \le \frac{n}{2}$ and either there is a vector bundle $\H$ on $B$ such that $\E  \cong \pi^*\H$ or $B \cong \P^m$. Moreover, in the latter case, there is a linear space $M \cong \P^m, M \subset X$ such that $M$ is a $k$-section of $\pi$ with $k \le \deg F$ for every fiber $F$ of $\pi$ and $\E_{|M}$ is trivial.
\end{itemize}
\end{lemma}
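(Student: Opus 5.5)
First I will prove the Picard group statement with the standard seesaw argument. Take a line bundle $\L$ on $X$. For every fiber $F$ we have $\L_{|F} \cong \O_F(a_F)$, and since $\pi$ is flat with connected fibers the degree $a_F$ does not depend on $F$; call it $a$. Then $\L(-a)$ restricts trivially to every fiber. Because $h^0(\O_F)=1$ for all $F$, Grauert's theorem shows that $\pi_*(\L(-a))$ is a line bundle $\mathcal M$ on $B$, and the natural map $\pi^*\mathcal M \to \L(-a)$ is an isomorphism. This gives $\L \cong \O_X(a) \otimes \pi^*\mathcal M$. The sum is direct: if $\O_X(a) \cong \pi^*\mathcal M$, restricting to a fiber forces $a=0$, and then $\mathcal M \cong \pi_*\O_X \cong \O_B$.

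For (i), suppose $\det \E$ is not $m$-ample. By Lemma \ref{q}, $X$ contains a linear space $M$ with $\dim M=m+1$ and $\E_{|M}$ trivial. The argument in the proof of Lemma \ref{q} shows more: the fibers of $\varphi_{\det \E}$ are linear spaces on which $\E$ is trivial, and some of them have dimension at least $m+1$. Since $\dim B=m$, the restriction $\pi_{|M}$ has positive-dimensional fibers, so $M$ meets some fiber $F$ in a positive-dimensional linear space. I then use the Picard decomposition to write $\det \E = \O_X(a)\otimes \pi^*\mathcal M$. Restricting to a line in $M \cap F$ gives $a=0$, because $\det \E$ is trivial there while $\O_X(1)$ has degree $1$. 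So $\det \E = \pi^*\mathcal M$, and every fiber $F$ of $\pi$ is contracted by $\varphi_{\det \E}$. Each such fiber lies inside a linear fiber of $\varphi_{\det \E}$ on which $\E$ is trivial, so $\E_{|F}$ is trivial for every $F$. Grauert then gives $\E \cong \pi^*\H$ with $\H=\pi_*\E$.

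For the bound $m \le \frac{n-1}{2}$, I will use \cite[Thm.~1]{lr2}: if $\E$ is Ulrich and trivial on a linear space $M \subset X$, then $\dim M \le \frac{n-1}{2}$ (I take this to be the content of that theorem, as it is used in the remark above). Applied to $M$ with $\dim M = m+1$, this gives $m+1 \le \frac{n+1}{2}$, that is $m \le \frac{n-1}{2}$.

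For (ii), Lemma \ref{q} gives $M \subset X$ linear with $\dim M=m$ and $\E_{|M}$ trivial, so $m \le \frac{n-1}{2}$ (hence $\le \frac n2$) by the same theorem. There are two cases.

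\emph{Case 1: $\pi_{|M}$ has a positive-dimensional fiber.} Then the argument of (i) goes through unchanged: $a=0$ and $\E \cong \pi^*\H$.

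\emph{Case 2: $\pi_{|M}$ is finite.} Since $\dim M = m = \dim B$, the map $\pi_{|M} : \P^m \to B$ is surjective. The same degree argument applied to a line $\ell \subset M$ gives $\ell\cdot \det\E = 0$, hence $\deg(\pi^*\mathcal M)_{|\ell} = -a\,\ell\cdot H = -a$. If $a=0$, we are back in the pullback situation of (i). If $a\ne 0$, then $\mathcal M$ pulled back to $\P^m$ is nontrivial. A surjective finite map from $\P^m$ onto a smooth $B$ forces $B \cong \P^m$ by Lazarsfeld's theorem, which is what I will cite for this step. For the multisection degree, $\pi_{|M}$ has degree $k$ and $M\cdot F = k$. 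Intersecting with $F$ inside the linear span shows $M \cap F$ is a finite set of points of $F$ lying on the linear space $M$, so $k \le \deg F$ (for instance, $M \cap \langle F\rangle$ is linear and meets $F$ in finitely many points).

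The main obstacle I expect is showing that $\E_{|F}$ is trivial on every fiber once $\det\E$ is a pullback. The key fact needed is that the linear fibers of $\varphi_{\det\E}$ containing $F$ are exactly the loci where $\E$ is trivial, and I will take this from the proof of Lemma \ref{q} together with \cite[Lemma 2.10]{lms1}.
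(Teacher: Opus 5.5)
Your overall architecture matches the paper's: a seesaw argument for $\Pic(X)$, Lemma \ref{q} to produce a linear space $M$ on which $\E$ is trivial, restriction to $M$ to force $a=0$, then $\det\E=\pi^*\mathcal M$ giving $\E\cong\pi^*\H$, and Lazarsfeld's theorem in the finite case. The genuine gap is in the numerical bounds.

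\textbf{The bounds rest on a false claim.} You derive $m\le\frac{n-1}{2}$ (and, in (ii), $m\le\frac{n}{2}$) from an assumed content of \cite[Thm.~1]{lr2}: that an Ulrich bundle cannot be trivial on a linear space of dimension $>\frac{n-1}{2}$. This is false. Take $X=\P^1\times\P^{n-1}$ embedded by $\O_{\P^1}(1)\boxtimes\O_{\P^{n-1}}(1)$. The line bundle $\O_{\P^1}(n-1)\boxtimes\O_{\P^{n-1}}$ is Ulrich and is trivial on the linear spaces $\{x\}\times\P^{n-1}$. More generally, any linear Ulrich triple over a curve is trivial on the $\P^{n-1}$ fibers. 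In the paper, that theorem is only used to say that a $q$-ample Ulrich bundle is not trivial on a $\P^{q+1}$.

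\textbf{The missing idea.} A morphism from $\P^k$ is either constant or finite onto its image, since the pull-back of an ample line bundle is $\O_{\P^k}(e)$ with $e\ge 0$. Hence in (i) the map $\pi_{|M}:\P^{m+1}\to B$ is constant. So $M\subseteq F$ for some fiber $F$, and $m+1\le\dim F=n-m$. This is exactly how the paper obtains the bound. It also makes your step ``$M$ meets a fiber in a positive-dimensional linear space'' precise, since $M\cap F=M$. The same reasoning in Case~1 of (ii) gives $m\le n-m$.

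\textbf{Case~2 of (ii) cannot be repaired.} The paper's proof gets $m\le\frac{n}{2}$ only in the constant case, and in the finite case the bound actually fails. Take $X=\P^3\times\P^1$ with $\O_{\P^3}(1)\boxtimes\O_{\P^1}(1)$, let $\pi$ be the first projection (so $m=3$, $n=4$), and let $\E=\O_{\P^3}\boxtimes\O_{\P^1}(3)$. Then:
\begin{itemize}
\item $\E$ is Ulrich;
\item $\varphi_{\det\E}$ has the $3$-dimensional fibers $\P^3\times\{y\}$, so $\det\E$ is not $2$-ample;
\item $\E$ is not a pull-back from $B$;
\item $M=\P^3\times\{y\}$ is a section of $\pi$ with $\E_{|M}$ trivial;
\item yet $m=3>2=\frac{n}{2}$.
\end{itemize}
This is harmless for the paper, which only applies the lemma with $m\le 2$ and $n\ge 4$. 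Your proof of (ii) should assert the bound only in the pull-back alternative.

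\textbf{Two smaller remarks.} First, the ``main obstacle'' you anticipate is not one. Once $\det\E=\pi^*\mathcal M$, the restriction $\E_{|F}$ is globally generated with trivial determinant on a connected projective fiber, hence trivial. This is \cite[Lemma 5.1]{lo}, which the paper cites, so you need not go through the linear fibers of $\varphi_{\det\E}$. Second, in Case~2 your discussion of $a\ne 0$ is unnecessary: Lazarsfeld's theorem applies directly to the finite surjection $\pi_{|M}$.
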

\begin{proof}
Since $\pi$ is flat and the fibers $F$ are connected, we have that $\dim F = n-m \ge 1$. Moreover, if $F'$ is another fiber, we have that $\deg F = H^{n-m}F=H^{n-m}F'=\deg F'$. Now let $\L \in \Pic(X)$, so that $\L_{|F} \cong \O_F(l_F)$ for some $l_F \in \Z$. We have
$$l_F \deg F = H^{n-m-1} \L F = H^{n-m-1} \L F' = l_{F'} \deg F'$$
and therefore $l_F=l_{F'}$. Setting $l=l_F$, we get that $(\L - lH)_{|F} \cong \O_F$ for every fiber $F$ and then 
\cite[Prop.~25.1.11]{vak} implies that there is a line bundle $L$ on $B$ such that $\L-lH \cong \pi^*L$. Thus $\Pic(X) \cong \Z \O_X(1)+\pi^* \Pic(B)$. Also, if $\O_X(a) \in \pi^* \Pic(B)$, then $\O_X(a)_{|F} \cong \O_F$ and therefore $a=0$. This proves the first statement of the lemma. Now assume that $\det \E$ is not $m$-ample. Then Lemma \ref{q} gives a linear space $M \cong \P^{m+1}, M \subset X$ such that $\E_{|M}$ is trivial. Since $\pi_{|M} : M \to B$ must be constant, there is a fiber $F$ such that $M \subseteq F$. In particular we have that $m+1 \le n-m$, that is $m \le \frac{n-1}{2}$. Also, we can write $\det \E = aH+\pi^*L$, for some $a \in \Z, L \in \Pic(B)$. Now
$$\O_M \cong (\det \E)_{|M} \cong ((\det \E)_{|F})_{|M} \cong \O_M(a)$$
and therefore $a=0$. Hence $\det \E = \pi^*L$ and, since $\det \E$ is globally generated and non trivial by Lemma \ref{ulr}(ii), \cite[Lemma 5.1]{lo} shows that there is a vector bundle $\H$ on $B$ such that $\E \cong \pi^*\H$. This proves (i). Next, to see (ii), suppose that $\det \E$ is not $(m-1)$-ample. Then Lemma \ref{q} gives a linear space $M \cong \P^m, M \subset X$ such that $\E_{|M}$ is trivial. Then $\pi_{|M} : M \to B$ is either constant or finite-to-one onto $B$. In the first case, arguing as in the proof of case (i), we reach the same conclusion. In the second case, we get from \cite[Thm.~4.1]{laz} that $B \cong \P^m$ and, if $k=\deg\pi_{|M}$, then $M \cap F$ consists of $k$ points and therefore, being $M$ a linear space, $k \le \deg F$.
\end{proof}

We record the following simple consequence of Lemma \ref{qa} and \cite[Lemma 4.1]{lo}.

\begin{remark}
\label{qa2}
Assume, in the above lemma, that $X$ is a linear $\P^{n-m}$-bundle over $B$. We have:
\begin{itemize}
\item [(i)] If $\det \E$ is not $m$-ample, then $(X, \O_X(1), \E)$ is a linear Ulrich triple over $B$.
\item [(ii)] If $\det \E$ is not $(m-1)$-ample, then either $(X, \O_X(1), \E)$ is a linear Ulrich triple over $B$ or $B \cong \P^m$ and there is there is a linear space $M \cong \P^m, M \subset X$ such that $M$ is a section of $\pi$ and $\E_{|M}$ is trivial.
\end{itemize}
\end{remark}

\section{Ulrich bundles with $c_1(\E)^4=0$}

In this section we will prove Theorem \ref{c1^4=0}.

\renewcommand{\proofname}{Proof of Theorem \ref{c1^4=0}}
\begin{proof}
If $(X,\O_X(1),\E)$ is as in (i) or (ii) of Theorem \ref{c1^4=0}, then $c_1(\E)^4=0$ and $\E$ is Ulrich in the first case by \cite[Prop.~2.1]{es} (or \cite[Thm.~2.3]{be}), in the second case by \cite[Lemma 4.1]{lo}. Moreover $c_2(\E)=0$ in (i) and $c_2(\E)^2=0$ in (ii) since, by definition of linear Ulrich triple, $\E$ is pull-back of a bundle on a variety of dimension at most $3$.

Now assume that $\E$ is Ulrich and $c_1(\E)^4=0$. 

In particular $\E$ is globally generated by Lemma \ref{ulr}(i). If $(X,\O_X(1))=(\P^n,\O_{\P^n}(1))$, we are in case (i) by Lemma \ref{ulr}(iii). Thus, we assume from now on, that $(X,\O_X(1)) \ne (\P^n,\O_{\P^n}(1))$. In particular, we have that $\rho(X) \ge 2$, for otherwise $c_1(\E)^4=0$ implies that $c_1(\E)=0$ and then $(X,\O_X(1))=(\P^n,\O_{\P^n}(1))$ by \cite[Lemma 2.1]{lo}. 

If $n \ge 6$, we can repeat the proof of \cite[Cor.~4]{ls} using now the fact that $c_1(\E)^4=0$ and that $4 \le \lfloor \frac{n}{2} \rfloor +1$. It follows from that proof that $(X,\O_X(1),\E)$ is as in (ii). 

Next, suppose that $n=5$. 

If $c_1(\E)^3=0$, \cite[Cor.~4]{ls} implies that $(X,\O_X(1),\E)$ is as in (ii). 
\renewcommand{\proofname}{Proof}

Therefore, we can henceforth assume that $c_1(\E)^3 \ne 0$ and then $\nu(\det \E)=3$. By \eqref{muk2} we get that $\phi(\E)=2$. Recall that, in the notation of Definition \ref{notgg}, for every $u \in X$, the fiber $F_u \subset X$ of $\Phi$ is a linear subspace of $\P^N$ by \cite[Lemma 2.10]{lms1}. Moreover, $\dim F_u \ge \phi(\E)=2$ for every $u \in X$. We have
\begin{claim}
\label{equi}
If $\dim F_u=2$ for every $u \in X$, the $(X,\O_X(1),\E)$ is as in (ii). 
\end{claim}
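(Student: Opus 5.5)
We are in the setting $n=5$, $c_1(\E)^3 \ne 0$, $c_1(\E)^4=0$, with $\Phi=\Phi_{\E}$ having all fibers $F_u$ linear spaces of dimension exactly $2$. The plan is to show that $\Phi$ exhibits $X$ as a linear $\P^2$-bundle over a smooth threefold $B$, with $\E$ pulled back from $B$. The conclusion then follows from Lemma \ref{qa} and Remark \ref{qa2}.

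\emph{Step 1: $B$ is smooth and $\Phi$ is a $\P^2$-bundle.} Let $B$ be the normalization of $\Phi(X)$ (equivalently, use the Stein factorization). Since the fibers are connected linear $\P^2$'s, $\Phi$ has connected fibers, so we may take $\Phi : X \to B$ with $\Phi_*\O_X=\O_B$ and $\dim B=3$. All fibers are $\P^2$'s, linearly embedded, so $\Phi$ is equidimensional with fibers $(\P^2,\O_{\P^2}(1))$. By the classical characterization of scrolls with all fibers linear of the same dimension (Fujita, or \cite[Prop.~3.2.1]{bs}), $B$ is smooth and $X \cong \P(\F)$ with $\F=\Phi_*\O_X(1)$ a rank $3$ bundle. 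Since $\O_X(1)$ is very ample and $h^0$ is computed on $B$, $\F$ is very ample. Hence $(X,\O_X(1))$ is a linear $\P^2$-bundle over the threefold $B$. Flatness of $\Phi$ follows from miracle flatness, because $X$ and $B$ are smooth and the fibers are equidimensional.

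\emph{Step 2: $\E$ is a pull-back.} By construction $\E_{|F_u}$ is trivial: $\Phi$ is defined by $\E$, so $\E_{|F_u}$ is a quotient of $H^0(\E)\otimes\O$ with constant kernel. In particular $(\det \E)_{|F_u}$ is trivial, and hence $\det\E$ is not $1$-ample. One could invoke Lemma \ref{q} directly, but here it suffices to note that $F_u\cong\P^2$ is a linear space of dimension $2$ on which $\E$ is trivial. Now apply Lemma \ref{qa}: every fiber has $\Pic(F)=\Z\O_F(1)$, $m=3$ and $n=5$, so $m-1=2$. The point is that $\det\E$ is not $(m-1)$-ample only requires a trivializing linear space of dimension $3$, which we lack. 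So instead of quoting Lemma \ref{qa}, I will argue directly with its proof. Write $\det\E=aH+\Phi^*L$ using the Picard decomposition of Lemma \ref{qa}. Restricting to a fiber gives $\O_{\P^2}\cong\O_{\P^2}(a)$, so $a=0$ and $\det\E=\Phi^*L$. Then \cite[Lemma 5.1]{lo} gives $\E\cong\Phi^*\H$ for a bundle $\H$ on $B$.

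\emph{Step 3: linear Ulrich triple.} Set $\G=\H(-\det\F)$. Then $\E=\pi^*(\G(\det\F))$, and since $\E$ is Ulrich, \cite[Lemma 4.1]{lo} (the converse direction) gives the vanishings \eqref{van-trip}. Thus $(X,\O_X(1),\E)$ is a linear Ulrich triple over a threefold. Moreover $c_2(\E)^2=\pi^*(c_2(\H)^2)=0$ because $\dim B=3$, which places us in case (ii).

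The main obstacle is Step 1, namely deducing smoothness of $B$ and the projective bundle structure from the mere equidimensionality of the linear fibers. I expect this to follow from the standard adjunction-theoretic result that a morphism all of whose fibers are $(\P^k,\O(1))$ is a $\P^k$-bundle over a smooth base. If needed, I will verify flatness and the locally free property of $\Phi_*\O_X(1)$ via cohomology and base change, using that $h^0(\O_{F}(1))=3$ is constant on fibers.
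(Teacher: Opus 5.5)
Your argument is correct. The paper disposes of this claim in one line by citing \cite[Lemmas 2.10 and 2.12]{lms1}. Your Steps 1--3 amount to a self-contained proof of what that citation delivers:
\begin{itemize}
\item the fibers of $\Phi$ are linear and all of dimension $2$, so Fujita's characterization \cite[Prop.~3.2.1]{bs} makes the Stein factorization base a smooth threefold $B$ and $X\cong\P(\F)$ a linear $\P^2$-bundle;
\item $\E$ is trivial on these fibers because $\Phi$ contracts them;
\item \cite[Lemmas 5.1 and 4.1]{lo} then give a linear Ulrich triple with $b=3$, and $c_2(\E)^2=0$ because $\dim B=3$.
\end{itemize}
The citation is shorter. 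Your version makes explicit that $b=3$ and that only Fujita's lemma and the definition of $\Phi$ are needed.

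Two small remarks. First, the detour through Lemma \ref{qa} and the decomposition of $\det\E$ is unnecessary. Once $\pi:X\to B$ is a $\P^2$-bundle whose fibers are exactly the $F_u$, a bundle trivial on every fiber satisfies $\E\cong\pi^*\pi_*\E$ directly, by cohomology and base change with $h^0(\O_{\P^2})=1$. Second, your proposed fallback for Step 1 via cohomology and base change is not an independent route. It presupposes the flatness of $\Phi$ over a smooth base, and that is exactly what Fujita's lemma supplies, so you should rely on the citation there.
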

\begin{proof}
Follows from \cite[Lemmas 2.10 and 2.12]{lms1}.
\end{proof}
\renewcommand{\proofname}{Proof}
We can therefore assume that
\begin{equation}
\label{nonequi}
\exists u_0 \in X : F_{u_0} \ \hbox{is a linear} \ \P^{k_0}, \ \hbox{for some} \ k_0 \ \hbox{such that} \ 3 \le k_0 \le 4.
\end{equation}
Let $x \in X$ be general point. Since $\dim F_x = \phi(\E)= 2$ we get that
\begin{equation}
\label{lapa2}
\dim F_1(X,x) \ge 1
\end{equation}
hence Lemma \ref{aggiu} applies and we are in one of the cases (a)-(e) of Lemma \ref{aggiu}. Since $\rho(X) \ge 2$, cases (a) and (b.1) are excluded. In case (b.2), \cite[Lemma 5.2]{lo} gives that $(X,\O_X(1),\E)$ is as in (ii). Case (c.1) does not occur, since the classification of Del Pezzo manifolds (see for example \cite[pages 860-861]{lp}, \cite[Table, page 710]{f1}) implies, in the case of dimension $5$, that $\rho(X)=1$. Also, case (c.2) can be excluded: In fact, it follows from \cite[Prop.~2.17]{lms1} that $(X, \O_X(1), \E)=(\P^1 \times Q, \O_{\P^1}(1) \boxtimes \O_Q(1), p^*(\mathcal \G(1)))$, where $p : \P^1 \times Q \to Q=Q_4$ is the second projection and $\G$ is a direct sum of spinor bundles on $Q$. But \cite[Rmk.~2.9]{o} gives that any spinor bundle $\mathcal S$ on $Q$ has $c_1(\mathcal S)=M$ with $M \in |\O_Q(1)|$. Hence $c_1(\G(1))^4 \ne 0$, and therefore also $c_1(\E)^4 \ne 0$, a contradiction. As for case (c.3), let $h : X=\P(\F) \to B$ be the bundle map and consider $f_u=h^{-1}(h(u))$ for any $u \in X$, so that $f_u$ is a linear $\P^3$ contained in $X$. We apply the Dichotomy Lemma \cite[Lemma 2.14]{lms1} to $h$. Note that case (fact) of that lemma does not occur, for otherwise \cite[Lemma 2.16(iii)]{lms1} gives the contradiction $\P^2 \cong F_x=f_x$. Therefore we are in case (fin) of the Dichotomy Lemma, that is $\dim F_u \cap f_u = 0$ for every $u \in X$. Consider $F_{u_0}$ in \eqref{nonequi}. Since $F_{u_0} \not\subset f_{u_0}$, we have that $h_{|F_{u_0}} : F_{u_0} \cong \P^{k_0} \to B$ is not constant, thus it is finite-to one, a contradiction. In case (d.1), we have that $(X,\O_X(1),\E)$ is as in \cite[Cor.~2(ii-2) or (ii-3)]{lms1}. It follows from the proof of \cite[Cor.~2]{lms1} in these cases that we have anyway a linear Ulrich triple over $\P^3$ or $Q_3$, thus again case (ii). Observe now that case (e) does not occur by \cite[Prop.~2.18]{lms1}.

Hence, we are left with cases (d.2)-(d.4). 

In order to study them, we will apply the Dichotomy Lemma \cite[Lemma 2.14]{lms1} to $h=\phi_{\tau}$. We first observe the following.

\begin{claim}
\label{fe2}
In cases (d.2), (d.3) and (d.4), case {\rm (fin)} of the Dichotomy Lemma \cite[Lemma 2.14]{lms1} does not hold for $\phi_{\tau}$.
\end{claim}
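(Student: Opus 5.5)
We argue by contradiction: assume that case (fin) of the Dichotomy Lemma \cite[Lemma 2.14]{lms1} holds for $h=\phi_{\tau}$. This means $\dim (F_u \cap f_u)=0$ for every $u \in X$, where $f_u=\phi_{\tau}^{-1}(\phi_{\tau}(u))$ is the fiber of the nef value morphism through $u$ and $F_u$ is the fiber of $\Phi=\Phi_{\E}$ through $u$. The plan is to produce, at a general point $x$, a line that lies in both $F_x$ and $f_x$; such a line makes $F_x \cap f_x$ positive-dimensional.

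Fix a general point $x \in X$. Then $F_x$ is a linear $\P^2$ by \cite[Lemma 2.10]{lms1}, since $\dim F_x=\phi(\E)=2$. Hence $F_x$ contains a one-dimensional family of lines through $x$, every one of them contained in $X$. This is exactly what gave \eqref{lapa2}, and it is the hypothesis under which Lemma \ref{aggiu} was applied. In cases (d.2)--(d.4), the last assertion of Lemma \ref{aggiu} says that $\phi_{\tau}$ contracts every line $L$ with $x \in L \subset X$. So any line $L \subset F_x$ through $x$ is mapped to the point $\phi_{\tau}(x)$, which gives $L \subseteq f_x$. Consequently $L \subseteq F_x \cap f_x$, and $\dim (F_x\cap f_x) \ge 1$, contradicting (fin). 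In fact the whole of $F_x$ is contracted, since it is swept out by lines through $x$; so $F_x \subseteq f_x$.

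The only point that needs care is the generality of $x$. The statement in Lemma \ref{aggiu} that $\phi_{\tau}$ contracts lines through $x$ was proved for a general point $x$ and for every $[L] \in F_1(X,x)$: from $\tau=3$ and $-K_X\cdot L - 2=\dim_{[L]} F_1(X,x)\ge 1$ one gets $(K_X+3H)\cdot L=0$. For this we need $\dim_{[L]}F_1(X,x)\ge 1$ at the particular lines $L\subset F_x$. This holds because those lines already move in a one-dimensional family through $x$ inside $F_x$, so that component of $F_1(X,x)$ has dimension at least $1$. Since (fin) is required at every $u$, in particular at a general one, failure at the general point $x$ suffices. I expect no real obstacle beyond this check: that the dimension formula is valid at those $[L]$ for general $x$, which is the smoothness of $F_1(X,x)$ at a general point recorded in the Remark.
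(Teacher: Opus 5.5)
Your proposal is correct and is essentially the paper's own argument: a line through a general $x$ inside the plane $F_x$ is contracted by $\phi_{\tau}$ by the last assertion of Lemma \ref{aggiu}, so it lies in $F_x \cap f_x$, which contradicts (fin). Your extra check that $\dim_{[L]} F_1(X,x) \ge 1$ holds specifically at the lines $L \subset F_x$ (because they move in a pencil through $x$) is a welcome bit of care that the proof of Lemma \ref{aggiu} glosses over.
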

\begin{proof}
Suppose that case (fin) holds for $\phi_{\tau}$ and let $L$ be any line such that $x \in L \subseteq F_x$. We know by Lemma \ref{aggiu} than $h(L)=\phi_{\tau}(L)$ is a point, so that $L \subseteq f_x \cap F_x$, a contradiction.
\end{proof}
\renewcommand{\proofname}{Proof}
Therefore the Dichotomy Lemma \cite[Lemma 2.14]{lms1} with $h=\phi_{\tau}$ gives that
\begin{equation}
\label{fact}
F_u \subseteq f_u \ \hbox{for every} \ u \in X
\end{equation}
and we have a commutative diagram
\begin{equation}
\label{comm}
\xymatrix{X \ar[d]_{h} \ar[r]^{\hskip -.3cm \widetilde \Phi} & \widetilde{\Phi(X)} \ar[dl]^{\psi} \\ X'}
\end{equation}
For any $u \in X$, set $T_u= \psi^{-1}(h(u)) \setminus \{\widetilde \Phi(u)\}$. Note that, for every $u \in X$, $\dim T_u = 2$ in case (d.2) and $\dim T_u \ge 1$ in case (d.3). We also get that, set-theoretically,
\begin{equation}
\label{fatto2}
f_u = F_u \sqcup \bigsqcup\limits_{t \in T_u} \widetilde \Phi^{-1}(t).
\end{equation}
Now,
\begin{claim} 
\label{nome}
Cases (d.3) and (d.4) do not occur.
\end{claim}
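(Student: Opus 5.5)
We work in the setting reached so far: $n=5$, $c_1(\E)^3\ne0$, $\phi(\E)=2$, every $F_u$ is a linear space of dimension at least $2$, \eqref{nonequi} holds, and \eqref{fact}, \eqref{comm}, \eqref{fatto2} hold for $h=\phi_\tau$. The plan is a dimension count on the fibers $f_u$ of $h$, compared with the fibers $F_u$ of $\Phi$. For a general $u$ we have $\dim F_u=2$. In case (d.4), $h$ is a linear $\P^2$-bundle over a threefold, so $\dim f_u=2$ for every $u$ and each $f_u$ is a linear $\P^2$. In case (d.3), $h$ is a quadric fibration with equidimensional fibers over a surface, so every $f_u$ is a (possibly singular) quadric of dimension $3$.

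\emph{Case (d.4).} By \eqref{fact}, $F_u\subseteq f_u\cong\P^2$ for every $u$. Since $\dim F_u\ge2$, this forces $F_u=f_u$ for all $u$, so $\dim F_u=2$ everywhere. This contradicts \eqref{nonequi}, since $F_{u_0}$ would be a $\P^{k_0}$ with $k_0\ge3$ inside a $\P^2$. So (d.4) does not occur.

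\emph{Case (d.3), general fiber.} For general $u$, $f_u$ is a smooth quadric $Q_3$ and $F_u\subseteq f_u$ is a linear $\P^2$. But a smooth $3$-dimensional quadric contains no planes, since maximal linear subspaces of $Q_3$ are lines. This is a contradiction. If generic smoothness of the fibers needs justification, it follows from $X$ smooth and $h$ flat with equidimensional fibers over a smooth surface, so the general fiber is smooth by Bertini/Sard. Alternatively, and more uniformly, use \eqref{nonequi}: $F_{u_0}\cong\P^{k_0}$ with $k_0\ge3$ would have to lie in the $3$-dimensional $f_{u_0}$, giving $F_{u_0}=f_{u_0}$. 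That is impossible, because $f_{u_0}$ is a quadric of degree $2$, while a linear $\P^3$ has degree $1$. To make this rigorous, note that every fiber of the flat morphism $h$ has the same degree $H^3\cdot f=2$ as a cycle.

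The only delicate point is this degree argument for the special fiber $f_{u_0}$ in (d.3), which may be a reducible quadric such as a union of two $\P^3$'s. In that situation I would instead use \eqref{fatto2}. If $F_{u_0}$ is one component $\P^3$ of $f_{u_0}$, then the other component is covered by the fibers $\widetilde\Phi^{-1}(t)$, $t\in T_{u_0}$. Each of these is a linear space of dimension at least $2$ that is disjoint from $F_{u_0}$. However, two $\P^3$'s lying in a common hyperplane $\P^4$ meet in a plane, so any plane inside the second component meets $F_{u_0}$ in at least a line. This contradicts the disjointness in \eqref{fatto2}. Hence in all cases we reach a contradiction, and (d.3) does not occur either.
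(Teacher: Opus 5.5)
Your argument is correct and is the paper's: in (d.4) you use \eqref{fact} to place $F_{u_0}$ from \eqref{nonequi}, which has dimension at least $3$, inside the $2$-dimensional fiber $f_{u_0}$, and in (d.3) you observe that a general $F_x$ is a plane contained in the smooth quadric threefold $f_x$, which contains no planes. The later discussion of the special fiber $f_{u_0}$ in (d.3) is superfluous, because the general-fiber argument already settles that case.
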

\begin{proof}
In case (d.3), if $x \in X$ is a general point, we have by \eqref{fact} that $F_x \subseteq f_x$. On the other hand, $F_x$ is a linear plane and $f_x$ is a smooth quadric of dimension $3$, a contradiction. In case (d.4), consider $F_{u_0}$ in \eqref{nonequi}, so that $F_{u_0} \subseteq f_{u_0}$ by \eqref{fact}, a contradiction since $\dim f_{u_0}=2$.
\end{proof}
\renewcommand{\proofname}{Proof}
It remains to exclude case (d.2). 

\begin{claim} 
\label{nomec}
Case (d.2) does not occur.
\end{claim}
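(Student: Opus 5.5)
We argue by contradiction, placing ourselves in case (d.2) of Lemma \ref{aggiu}. Then $h=\phi_\tau : X \to X'=B$ is a Del Pezzo fibration over a smooth curve with $\tau=3$. Its general fiber $f_x$ is a smooth Del Pezzo fourfold with $-K_{f_x}=3H_{|f_x}$, and $\dim T_u=2$ for every $u$.

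By \eqref{fact} and \eqref{fatto2}, each fiber $f_u$ is a set-theoretic union of linear spaces, namely the fibers of $\widetilde\Phi$ lying over $\psi^{-1}(h(u))$. Over the surface $\psi^{-1}(h(x))$ these fibers all have dimension at least $2$, and for general $x$ they have dimension exactly $2$. Hence the general fiber $f_x$, a smooth fourfold, is covered by a $2$-dimensional family of linear planes $F_u$, $u\in f_x$. Moreover $\E_{|F_u}$ is trivial, since the fibers of $\Phi$ are contracted.

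The key step is to show that no smooth Del Pezzo fourfold admits such a structure. I would use the classification of Del Pezzo manifolds (\cite[pages 860-861]{lp}, \cite[Table, page 710]{f1}) in dimension $4$:
\begin{itemize}
\item the cubic fourfold, the intersection of two quadrics, and the linear sections of $\mathbb G(1,4)$, all with $\rho=1$;
\item $\P^2\times\P^2$;
\item degree $1$ and $2$ cases, which are not covered by planes in this way.
\end{itemize}
The three tools are the following.

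First, a family of planes through the general point of $f_x$ whose members are disjoint as $u$ varies gives a morphism $f_x \to \widetilde\Phi(f_x)$, a surface, with fibers $\P^2$. This forces $\rho(f_x)\ge 2$, because $\widetilde\Phi$ restricted to $f_x$ is a non-constant morphism onto a surface. That rules out every case with $\rho=1$ and leaves only $f_x\cong\P^2\times\P^2$.

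Second, in the case $f_x\cong\P^2\times\P^2$, the Ulrich bundle $\E$ restricts to $f_x$ as a bundle pulled back from one factor. I would then show that $\E_{|f_x}$ is the pullback of a bundle on $\P^2$ whose $c_1$ has square nonzero on $\P^2$.

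Third, I combine this with $c_1(\E)^3\ne 0$ and $c_1(\E)^4=0$. Since $\nu(\det\E)=3$, $\det\E$ restricted to the general fiber $f_x$ must have numerical dimension at least $2$. A contradiction should then come from Chern class computations: compute $c_1(\E)^4$ via $c_1(\E)^4=c_1(\E)^4\cdot[\ldots]$, or use that $c_1(\E)^3\cdot f_x = c_1(\E_{|f_x})^3 = 0$ on $\P^2\times\P^2$ for a pullback. Alternatively, the contradiction can come from the nonequidimensional fiber $F_{u_0}\cong\P^{k_0}$ with $k_0\ge 3$ from \eqref{nonequi}. This $F_{u_0}$ lies in the fiber $f_{u_0}$ by \eqref{fact}, so that fiber, possibly singular, contains a $\P^3$ or $\P^4$ on which $\E$ is trivial. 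One then argues via flatness/degree and the condition $K_X+3H=h^*\L$ that this is impossible. For instance, restricting $K_X+3H$ to $F_{u_0}\cong\P^{k_0}$ gives $K_X\cdot$line $=-3$, while the normal bundle of $\P^{k_0}$ inside a $4$-dimensional fiber contradicts this via adjunction when $k_0=4$, and via the family of lines when $k_0=3$.

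I expect the main obstacle to be this final step: handling special fibers and the special fiber $F_{u_0}$. The contradiction on the general fiber may not suffice, because $\P^2\times\P^2$ could genuinely occur as the general fiber. I would first try to derive $\dim F_u=2$ for all $u$, which contradicts \eqref{nonequi} (by Claim \ref{equi} it would already give case (ii)), by showing that fibers of $\widetilde\Phi$ inside a Del Pezzo fiber cannot jump in dimension.
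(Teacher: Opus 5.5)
Your reduction on the general fibre is correct and is a reasonable alternative to the paper's. The restriction of $\widetilde\Phi$ to a smooth fibre $f_x$ is a nonconstant morphism onto the surface $\psi^{-1}(h(x))$, so $\rho(f_x)\ge 2$, and the classification leaves only $f_x\cong\P^2\times\P^2$ in its Segre embedding. The paper instead uses \cite[Lemma 2.16(iii)]{lms1}, which forces $F_u=f_u$ when $\Pic f_u\cong\Z$.

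After that step, however, the proposal does not prove the claim. Nothing about the general fibre can give a contradiction. Example \ref{secondo} is a Del Pezzo fibration over a curve with all fibres $\P^2\times\P^2$, and an Ulrich $\E$ pulled back from a $\P^2$ factor with $c_1(\E)^3\ne 0$ and $c_1(\E)^4=0$; your ``second'' and ``third'' tools hold there without contradiction. What is incompatible with case (d.2) is only the jumping fibre $F_{u_0}\cong\P^{k_0}$, $k_0\ge 3$, from \eqref{nonequi}, and for it you give no argument.
\begin{itemize}
\item For $k_0=4$, your adjunction idea presupposes that $f_{u_0}$ is irreducible. If it were, then $f_{u_0}=6F_{u_0}$ as divisors, so $F_{u_0}\cdot L=0$, against $F_{u_0}\cdot L=-2$ from adjunction and $K_X\cdot L=-3$. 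But a priori, by \eqref{fatto2} and $\dim T_{u_0}=2$, $f_{u_0}$ would then have further components. A negative normal degree on one component of a reducible fibre is not a contradiction.
\item For $k_0=3$, ``via the family of lines'' is not an argument.
\item Your fallback, showing that $\widetilde\Phi$-fibres cannot jump in dimension inside a Del Pezzo fibre, just restates what has to be proved.
\end{itemize}

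The missing ingredient is Fujita's structure theory for the special fibres of a Del Pezzo fibration, which is what the paper uses. Set $V=f_{u_0}$ and $N=F_{u_0}$.
\begin{itemize}
\item By \cite[(1.5)]{f2}, $V$ is embedded by $H_{|V}$ in $\P^8$ with degree $6$.
\item By \cite[(4.6)]{f2}, $V$ is a reduced, irreducible, normal fourfold of type (vu) in \cite[Thm.~(2.9)]{f3}. Irreducibility immediately forces $k_0=3$.
\item $V$ is singular, because $\P^2\times\P^2$ contains no $\P^3$.
\item $V$ is not a cone, so by \cite[(6) and (10)]{f3} it lies in a cone $W$ whose vertex is a plane $R$ and whose base is a Veronese surface $M$.
\end{itemize}
Now $V\setminus N$ is the disjoint union of the fibres $P_t$, $t\in T$, and these are planes for $t$ in a nonempty open subset $U$ of the surface $T$. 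Project from $R$. If $P_t\cap R$ were empty or a point, $P_t$ would map onto a plane or a line contained in $M$, which is impossible. Hence $P_t\cap R$ is a line for all but at most one $t\in U$. Two such lines in the plane $R$ meet, contradicting the disjointness of the $P_t$'s. Some structural input of this kind on the degenerate fibre is indispensable, and your proposal lacks it.
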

\begin{proof} 
Let $f_u$ be a smooth fiber of $h=\phi_{\tau}$, so that $f_u$ is a smooth Del Pezzo $4$-fold. If $\Pic f_u \cong \Z$, it follows from \cite[Lemma 2.16(iii)]{lms1} that $F_u=f_u$, a contradiction since $F_u$ is a linear space. Therefore, the classification of Del Pezzo manifolds (see for example \cite[pages 860-861]{lp}, \cite[Table, page 710]{f1}) implies, in the case of dimension $4$, that $f_u \cong \P^2 \times \P^2$ and $H_{|f_u}=\O_{\P^2}(1) \boxtimes \O_{\P^2}(1)$, that is $f_u$ is embedded in $\P^8$ by the Segre embedding, hence with $\deg f_u = 6$.

Consider \eqref{nonequi} and set $V = f_{u_0}$ and $N=F_{u_0}$. It follows from \cite[(1.5)]{f2} that $V$ is embedded by $H_{|V}$ in $\P^8$ with degree $6$. Hence \cite[(4.6)]{f2} gives that $V$ is a reduced, irreducible normal $4$-fold and it is of type (vu) in \cite[Thm.~(2.9)]{f3}. Moreover \eqref{fact} gives that
\begin{equation}
\label{incl}
N \subset V \subset \P^8
\end{equation}
hence, in particular, since $V$ is irreducible, $N$ is a linear space of dimension $3$. Consider \eqref{fatto2} and set $T = T_{u_0}, P_t = \widetilde \Phi^{-1}(t)$, for any $t \in T$. Then $P_t$ is a linear space of dimension $k_t$ with $k_t \in \{2, 3\}$  (since $V$ is irreducible) and there is a nonempty open subset $U \subseteq T$ such that $k_t=2$ for every $t \in U$. Moreover, set-theoretically, we have
\begin{equation}
\label{fatto3}
V = N \sqcup \bigsqcup\limits_{t \in T} P_t.
\end{equation}
Since $\P^2 \times \P^2$ does not contain any subvariety isomorphic to $\P^3$, it follows from \eqref{incl} that $V$ is singular. The plan is to reach a contradiction by applying to $V$ the arguments in \cite{f3}.

Note that $V$ is not a cone by \cite[page 232]{f2}. Since $V$ is of type (vu) in \cite[Thm.~(2.9)]{f3}, it follows from \cite[(6) and (10)]{f3} that $V \subset W$, where $W$ is a cone, with vertex a plane $R$, over a $2$-Veronese surface $M=W \cap L$, with $L$ a linear space of dimension $5$ disjoint from $R$. We consider the projection $\pi : \P^8 \smallsetminus R \to L$ and the corresponding morphism $\widetilde \pi : \widetilde \P^8 \to L$ obtained by blowing up $R$. Given any subvariety $G \subset \P^8$, we will denote by $\pi(G)$ the image $\widetilde \pi (\widetilde G)$ of the strict transform of $G$. Note that, when $G$ is a linear space, so is $\pi(G)$. Let $t \in U$, so that we know that $P_t$ is a plane. If $R \subseteq P_t$, we have that $R=P_t$ and this can happen for at most one $t \in U$, since the $P_t$'s are disjoint. We get a surface $U'=\{t \in U : R \ne P_t \}$. Let now $t \in U'$. If $R \cap P_t = \emptyset$, then $\pi(P_t)$ is a plane contained in $M$, a contradiction. If $R \cap P_t$ is a point, then $\pi(P_t)$ is a line contained in $M$, again a contradiction. Therefore $R \cap P_t$ is a line for every $t \in U'$. But if $t' \in U', t' \ne t$, the two lines $R \cap P_t, R \cap P_{t'}$ are both contained in the plane $R$, giving rise to the contradiction $\emptyset \ne (R \cap P_t) \cap (R \cap P_{t'}) \subset P_t \cap P_{t'} = \emptyset$.

Therefore case (d.2) is excluded.
\end{proof}
\renewcommand{\proofname}{Proof}

This concludes the proof of the theorem for $n \ge 5$.

For (iii)-(iv), assume $n=4$ and $c_1(\E)^4=0$, so that $c_2(\E)^2=0$ by Lemmas \ref{ulr}(i) and \ref{c3no}(i). If $c_2(\E)=0$ and $r \ge 2$, then we are in case (iii) by \cite[Thm.~2]{blv}. Now suppose that $c_2(\E) \ne 0$. Since $c_1(\E)^4=0$, then $\E$ is not big by \cite[Rmk.~2.2]{lm}. Therefore \cite[Thm.~1]{lms1} applies and we get cases (ii1) and (iii)-(x) of \cite[Thm.~1]{lms1}, hence $(X,\O_X(1),\E)$ is as in (iv). 

This concludes the proof of the theorem.
\end{proof}
\renewcommand{\proofname}{Proof}

\section{Important results when $c_2(\E)^2=0$}

In this section we will prove several technical results that constitute the heart of the paper.

The following lemma will be crucial in several proofs.

\begin{lemma}
\label{mapc}
Let $n \ge 4, d \ge 2$ and let $\E$ be a rank $r$ globally generated bundle on $X \subset \P^N$. Assume that $c_2(\E) \ne 0, c_2(\E)^2=0$ and $\det \E$ is $(n-4)$-ample. Then $c_i(\E)=0$ for $i \ge 3$ and there is a morphism with connected fibers $p : X \to C$ onto a smooth curve $C$. Moreover, we have:
\begin{itemize}
\item[(i)] Let $Z=D_{r-2}(\varphi)$, for a general morphism $\varphi : \O_X^{\oplus (r-1)} \to \E$. Then $Z=\overline Z \cap Y$ is a smooth complete intersection of two divisors on $X$, with $Y \in |\det \E|$ smooth irreducible and $\overline Z = p^* \Delta$ for some effective globally generated divisor $\Delta$ on $C$. 
\item[(ii)] If $\E$ is Ulrich and $Z$ is a general Ulrich subvariety, then 
$$H^i(\O_X(Y-jH-\overline Z))=0 \ \hbox{for} \ j \ge 1, 0 \le i \le n-2.$$ 
Moreover, if $H^1(\O_X(-Y+\overline Z))=0$, then $H^0(\E(-Y+\overline Z)) \ne 0$ and if $H^0(\O_X(-Y+\overline Z))=0$, then $h^0(\E(-Y+\overline Z))=1$. 
\end{itemize}
\end{lemma}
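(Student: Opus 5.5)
The plan is to reduce everything to a rank $2$ quotient of $\E$, and then use the $(n-4)$-ampleness of $\det \E$ twice: once to kill the higher Chern classes, and once for a Lefschetz-type extension from a divisor $Y$ to $X$.

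\emph{Vanishing of $c_i(\E)$ for $i \ge 3$.} Since $\det \E$ is globally generated and $(n-4)$-ample, Remark \ref{somm} says every fiber of $\varphi_{\det \E}$ has dimension at most $n-4$. Hence $\nu(\det \E) \ge 4$, so $c_1(\E)^4 \ne 0$. By Lemma \ref{c3no}(ii), $c_3(\E) \ne 0$ would force $c_1(\E)^4=0$; therefore $c_3(\E)=0$. Now take a general $V'' \subset H^0(\E)$ with $\dim V''=r-2$. By \cite[Lemma 4.1]{blv}, $D_{r-3}=\emptyset$, which gives an exact sequence
$$0 \to V'' \otimes \O_X \to \E \to \G \to 0$$
with $\G$ a globally generated rank $2$ bundle. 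Then $c(\E)=c(\G)$, so $c_i(\E)=0$ for all $i \ge 3$. A general Ulrich subvariety $Z$ is then the zero locus of a general section of $\G$.

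\emph{Constructing $Y$ and $p$.} Take a general $2$-dimensional $W \subset H^0(\G)$. The degeneracy locus $Y=D_1(W \otimes \O_X \to \G)$ lies in $|\det \G|=|\det \E|$, and it is smooth and irreducible by Bertini-type results \cite{ba} and the fact that $c_1(\E)^4 \ne 0$. On $Y$ the kernel line gives a morphism $Y \to \P(W)=\P^1$ whose fibers are the $Z_\lambda=Z(s_\lambda)$. Thus $Z$ is a Cartier divisor on $Y$ moving in a base-point-free pencil, consistent with $c_2(\E)^2=0$. Since $Y$ is an $(n-4)$-ample effective divisor, the Lefschetz theorem for $q$-ample divisors (Sommese) gives $H^i(X) \cong H^i(Y)$ for $i \le 2$, so $\Pic(X) \cong \Pic(Y)$. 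Let $D$ be the class on $X$ with $D_{|Y}=\O_Y(Z)$. Next, I would show that $D$ is effective and nef with $D^2 \equiv 0$, using $D^2 \cdot Y\cdot(\cdot)=Z^2=0$ on $Y$ together with Lemma \ref{eff}. To produce sections of $D$, lift them from $Y$ through
$$0 \to \O_X(D-Y) \to \O_X(D) \to \O_Y(Z) \to 0.$$
Then $D$ is semiample with numerical dimension $1$. Its Stein factorization gives $p : X \to C$ with connected fibers onto a smooth curve, and $D=p^*\Delta$. Setting $\overline Z \in |p^*\Delta|$ to be the divisor restricting to $Z$, we get $Z=\overline Z \cap Y$, a complete intersection. \textbf{This is the main obstacle}: it needs $H^1(\O_X(D-Y))=0$, or some other argument that the pencil on $Y$ extends. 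I would first try Serre duality combined with $q$-ample vanishing for $Y$. Failing that, I would argue geometrically that, because $c_2(\E)^2=0$ and the $Z$'s cover $X$ (Lemma \ref{mov}), the Ulrich subvarieties through a general point sweep out divisors that are pairwise disjoint or equal. These would form a one-dimensional family of disjoint divisors, hence the fibers of a map to a curve.

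\emph{Part (ii).} Twisting the sequence
$$0 \to \O_X^{\oplus(r-1)} \to \E \to \I_{Z/X}(Y) \to 0$$
by $-jH$ and using that $\E$ is Ulrich and $H^i(\O_X(-j))=0$ for $i \le n-1$, $j \ge 1$, gives $H^i(\I_{Z/X}(Y-jH))=0$ for $i \le n-2$. The Koszul resolution of the complete intersection,
$$0 \to \O_X(-\overline Z) \to \O_X(Y-\overline Z) \oplus \O_X \to \I_{Z/X}(Y) \to 0,$$
twisted by $-jH$, combined with the vanishing of $H^i(\O_X(-\overline Z-jH))$ and $H^i(\O_X(-jH))$ in the relevant range (Kawamata–Viehweg, since $\overline Z$ is nef), yields $H^i(\O_X(Y-jH-\overline Z))=0$. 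For the last claims, twist the first sequence by $-Y+\overline Z$:
$$0 \to \O_X(-Y+\overline Z)^{\oplus (r-1)} \to \E(-Y+\overline Z) \to \I_{Z/X}(\overline Z) \to 0.$$
The inclusion $\O_X(-\overline Z) \subset \I_{Z/X}$ gives a nonzero section of $\I_{Z/X}(\overline Z)$. If $H^1(\O_X(-Y+\overline Z))=0$, this section lifts, so $H^0(\E(-Y+\overline Z)) \ne 0$. If moreover $H^0(\O_X(-Y+\overline Z))=0$, then $h^0(\E(-Y+\overline Z))=h^0(\I_{Z/X}(\overline Z))$. The latter equals $1$: the sequence $0\to\O_X\to\I_{Z/X}(\overline Z)\to \O_{\overline Z}(\overline Z - Y)\to 0$ reduces this to showing $h^0(\O_{\overline Z}(\overline Z-Y))=0$, which holds since $\overline Z$ is a union of fibers of $p$ and $Y$ is positive on them.
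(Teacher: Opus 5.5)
Your handling of the Chern classes (via Lemma \ref{c3no}(ii) and the rank-$2$ quotient $\G$), of $Y$ and $Z$, and of part (ii) is essentially right and parallels the paper. Your Koszul variant for $H^i(\O_X(Y-jH-\overline Z))$ and your direct proof that $h^0(\I_{Z/X}(\overline Z))=1$ are fine alternatives. (Smoothness of $Y$ comes from $D_0(W\otimes\O_X\to\G)$ having expected codimension $4$ and class $c_2(\E)^2-c_1(\E)c_3(\E)=0$, hence being empty; $c_1(\E)^4\neq 0$ only matters for connectedness.)

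The genuine gap is the step you flag yourself: producing $p:X\to C$ from the pencil $Y\to\P(W)\cong\P^1$. All of (i) and (ii) rests on it, and the route you sketch does not close it.
\begin{itemize}
\item The class $D$ with $D_{|Y}=\O_Y(Z)$ exists by Lefschetz. However, nefness of $D$ and $D^2\equiv 0$ do not follow from $Z^2=0$ on $Y$. For an $(n-4)$-ample divisor, restriction is only injective up to $H^3$, so nothing controls $D^2\in H^4(X)$. Also, Lemma \ref{eff} needs $D$ globally generated, which is what you are trying to prove.
\item Lifting sections needs $H^1(\O_X(D-Y))=0$, which you do not establish.
\item Even after lifting, you would still have to exclude base points of $|D|$ off $Y$. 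A priori these could lie on fibres of $\varphi_{\det\E}$ that miss $Y$.
\item The geometric fallback is too vague to evaluate. Moreover, Lemma \ref{mov} needs $H^1(\O_X)=0$ when $r\ge 3$, and this fails in general here, since $C$ can have positive genus.
\end{itemize}

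The missing ingredient is Sommese's extension theorem \cite[Thm.~5.2.1]{bs}: a morphism from a smooth $k$-ample divisor $Y\subset X$ onto a normal variety $C'$ with $\dim Y-\dim C'\ge k+2$ extends to $X$. Here $k=n-4$ and the target is a curve, so $\dim Y-1=n-2=k+2$, exactly the borderline case. This is what the paper uses, followed by Stein factorization to get connected fibres and $\overline Z=p^*(g^*\Delta')$.

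If you want to run it by hand along your lines, the mechanism is a descending induction on $j$:
\begin{itemize}
\item $H^1(\O_X(D-jY))=0$ for $j\gg0$, by Serre duality and the $(n-4)$-ampleness of $Y$, since $n-1>n-4$.
\item $H^1(\O_Y(Z-jY))=0$ for $j\ge 1$, by Leray along $Y\to\P^1$ together with relative vanishing. This works because the fibres have dimension $n-2$ and $\det\E$ has numerical dimension at least $2$ on them.
\end{itemize}
These give $H^1(\O_X(D-Y))=0$, and one must then still show that the extended map is a morphism.
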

\begin{proof} 
Since $\det \E$ is globally generated and $(n-4)$-ample, using Remark \ref{somm}, we have that $c_1(\E)^4H^{n-4} > 0$ by \cite[(2.1)]{bs}, hence $c_3(\E)=0$ by Lemma \ref{c3no}(ii) and then $c_i(\E)=0$ for $i \ge 3$ by \cite[Lemma 4.1]{blv}. Note that $r \ge 2$, since $c_2(\E) \ne 0$. Using \cite[Lemmas 4.1 and 4.5]{blv}, we find that $Z$ is nonempty and there is an irreducible divisor $Y \in |\det \E|$ containing $Z$. Since $Y=D_{r-1}(\psi)$, where $\psi: W \otimes \O_X \to \E, W \subset H^0(\E)$ a general subspace of dimension $r$, it follows that both $Z$ and $Y$ are smooth. In fact, suppose that $Z$ (respectively $Y$) is singular. Then \cite[Statement (folklore)(i), \S 4.1]{ba} implies that $r \ge 3$ and $D_{r-3}(\varphi)$ (respectively $D_{r-2}(\psi)$) is nonempty of the expected codimension. On the other hand, Porteous' formula (see for example \cite[Thm.~12.4]{eh}), gives that $[D_{r-3}(\varphi)]=c_3(\E)^2-c_2(\E)c_4(\E)=0$ (respectively $[D_{r-2}(\psi)]=c_2(\E)^2-c_1(\E)c_3(\E)=0$), a contradiction. Therefore $Z$ and $Y$ are smooth. 
 
Hence, \cite[Lemma 4.5]{blv} also gives that $\O_Y(Z)$ is globally generated and $Z^2=0$ so that we get, upon passing to the normalization if needed, a morphism $p' : Y \to C'$ onto a smooth curve $C'$ and $Z=(p')^*\Delta'$ for some effective globally generated divisor $\Delta'$ on $C'$. Now, $p'$ extends to a morphism $p_1 : X \to C'$ by \cite[Thm.~5.2.1]{bs} since $Y$ is $(n-4)$-ample. Moreover, passing to the Stein factorization $p : X \to C, g: C \to C'$ of $p_1$, we have that $p$ has connected fibers, and then, if we set $\overline Z=p^*(g^*\Delta')$, we also get that $Z=\overline Z \cap Y$ is a smooth complete intersection of two divisors on $X$. This proves (i). As for (ii), assume now that $\E$ is Ulrich, let $j \ge 1$ and let $i \in \{0, \ldots, n-2\}$. Then the Eagon-Northcott complex gives an exact sequence
$$0 \to \O_X^{\oplus (r-1)}(-j) \to \E(-j) \to \I_{Z/X}(Y-jH) \to 0$$
where we have $H^{i+1}(\O_X^{\oplus (r-1)}(-j))=0$ by Kodaira vanishing and $H^i(\E(-j))=0$ for $i=0$ by definition of Ulrich and for $i \ge 1$ since $\E$ is aCM by Lemma \ref{ulr}(i). It follows that 
\begin{equation}
\label{primovan}
H^i(\I_{Z/X}(Y-jH))=0.
\end{equation}
Now, the exact sequence 
$$0 \to \O_X(-jH-\overline Z) \to \O_X(-jH) \to \O_{\overline Z}(-jH) \to 0$$
shows that $H^i(\O_{\overline Z}(-jH))=0$ since $H^{i+1}(\O_X(-jH-\overline Z))=H^i(\O_X(-jH))=0$ by Kodaira vanishing because $\O_X(\overline Z)$ is globally generated. We deduce from the exact sequence
$$0 \to \O_X(Y-jH-\overline Z) \to \I_{Z/X}(Y-jH) \to \O_{\overline Z}(-jH) \to 0$$
that $H^i(\O_X(Y-jH-\overline Z))=0$. To finish the proof of (ii), assume that $H^1(\O_X(-Y+\overline Z))=0$. From the exact sequence 
$$0 \to \O_X(-Y+\overline Z) \to \I_{Z/X}(\overline Z) \to \O_Y \to 0$$
we deduce that $H^0(\I_{Z/X}(\overline Z)) \ne 0$. Now, the exact sequence
$$0 \to \O_X(-Y+\overline Z)^{\oplus (r-1)} \to \E(-Y+\overline Z) \to \I_{Z/X}(\overline Z) \to 0$$
allows to conclude that $H^0(\E(-Y+\overline Z)) \ne 0$. Also, if $H^0(\O_X(-Y+\overline Z))=0$, then $h^0(\E(-Y+\overline Z))=h^0(\I_{Z/X}(\overline Z))=h^0(\O_Y)=1$, that is (ii).
\end{proof}

We now prove Corollary \ref{c22=0}.

\renewcommand{\proofname}{Proof of Corollary \ref{c22=0}}
\begin{proof}
First note that $d=\deg X \ge 2$ by Lemma \ref{ulr}(iii), since $c_2(\E) \ne 0$. Moreover, $\E$ is globally generated by Lemma \ref{ulr}(i) and $\det \E$ is globally generated and non trivial by Lemma \ref{ulr}(ii), since $c_2(\E) \ne 0$. Suppose first that $\det \E$ is $(n-4)$-ample. Then $\E$ is not $(n-4)$-ample if $r \ge 3$, for otherwise, since $c_3(\E)=0$, if $\varphi: \O_X^{\oplus (r-2)} \to \E$ is a general morphism, then $D_{r-3}(\varphi)=\emptyset$. On the other hand, the same proof of \cite[Thm.~6.4(a)]{t} gives that $D_{r-3}(\varphi) \ne \emptyset$. Moreover, we are in case (ii) by Lemma \ref{mapc}(i) and $\rho(X) \ge 2$. If $\det \E$ is not $(n-4)$-ample, we are in case (i) by Lemma \ref{q}. It follows that $\E$ is not $(n-4)$-ample by Remark \ref{somm} and $\rho(X) \ge 2$, for otherwise $\det \E$ is ample, a contradiction. Next, suppose that $c_3(\E) \ne 0$. To see (i1)-(i2), note that $c_1(\E)^4=0$ by Lemmas \ref{ulr}(i) and \ref{c3no}(ii). Now, if $n \ge 5$, Theorem \ref{c1^4=0} applies with $b=3$ since $c_3(\E) \ne 0$, hence we are in case (i1). If $n=4$, we get by Theorem \ref{c1^4=0} that we are in one of the cases (ii1) and (iii)-(x) of \cite[Thm.~1]{lms1}. Moreover the cases (iii) and (v2) of \cite[Thm.~1]{lms1} are excluded since $c_3(\E) \ne 0$. This gives (i2).
\end{proof}
\renewcommand{\proofname}{Proof}

We now study the case on $\P^k$- bundles.

\begin{lemma}
\label{pfgen}
Let $B$ be a smooth irreducible variety of dimension $b \ge 1$ and let $\F$ be a rank $n-b+1$ very ample bundle on $B$ with $n \ge \max\{4, b+2\}$. Let $X=\P(\F) \subset \P H^0(H)$, where $H=\xi$ is the tautological line bundle and let $\pi : X \to B$ be the bundle map. Let $\E$ be a rank $r$ Ulrich bundle with $c_2(\E) \ne 0, c_2(\E)^2=0$ and let $\det \E=a \xi + \pi^*M_1$ for some $M_1 \in \Pic(B)$. If $\det \E$ is $(n-4)$-ample, then $c_3(\E)=0$ and we have:
\begin{itemize}
\item[(i)] $0 \le a \le b$.
\item[(ii)] If $a=0$, $(X, \O_X(1),\E)$ is linear Ulrich triple over $B$.
\item[(iii)] If $a=b$, then $b=1$ and there are a line bundle $M$ and a rank $r-1$ bundle $\G$ on $B$ with $H^i(M)=H^i(\G)=0$ for $i \ge 0$, such that, if $\L=\xi+\pi^*M$, then $\E$ sits in an exact sequence of Ulrich bundles
$$0 \to \L \to \E \to \pi^*(\G(\det \F)) \to 0.$$  
\end{itemize}
Vice versa, any vector bundle $\E$ in (iii) is a rank $r$ Ulrich bundle with $c_2(\E) \ne 0$ and $c_2(\E)^2=0$. 
\end{lemma}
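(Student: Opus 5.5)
The first step is to invoke Lemma \ref{mapc}. Since $\E$ is Ulrich, it is globally generated by Lemma \ref{ulr}(i). Together with the hypotheses $c_2(\E)\ne 0$, $c_2(\E)^2=0$ and the $(n-4)$-ampleness of $\det\E$, this gives $c_i(\E)=0$ for $i\ge 3$, in particular $c_3(\E)=0$. It also gives a morphism $p:X\to C$ with connected fibers onto a smooth curve, and a general Ulrich subvariety $Z=\overline Z\cap Y$ with $Y\in|\det\E|$ and $\overline Z=p^*\Delta$.

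\emph{Bound on $a$.} Restrict $\E$ to a fiber $f\cong\P^{n-b}$ of $\pi$. The line bundle $\det\E=a\xi+\pi^*M_1$ is globally generated, so $a\ge 0$. For the upper bound I would use Lemma \ref{ulr}(v) on the general fiber, or more precisely the known restriction of Ulrich bundles to linear fibers. Since $K_X=-(n-b+1)\xi+\pi^*(K_B+\det\F)$, we get $K_X+(n+1)H=b\xi+\pi^*(\ldots)$. By Lemma \ref{ulr}(viii), $\E^*(K_X+(n+1)H)$ is also Ulrich, hence globally generated, so its determinant $(rb-a)\xi+\pi^*(\ldots)$ has nonnegative $\xi$-coefficient. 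To sharpen this to $a\le b$ rather than $a\le rb$, I would use that $\E_{|f}$ and $(\E^*(b\xi))_{|f}$ are both globally generated on $\P^{n-b}$. Then $\E_{|f}$ has splitting type with all entries in $[0,b]$, and by \cite[Lemma 4.1]{lo}/\cite{lo}-type arguments the restriction is $\O^{\oplus(r-k)}\oplus(\text{stuff})$. The cleanest route is to show that at most one summand is nontrivial, using $c_2(\E)^2=0$ and $c_3=0$ on $f$. If $\E_{|f}$ had $c_2\ne0$ on $\P^{n-b}$ with $n-b\ge 2$, then $c_2^2\ne 0$ there whenever $n-b\ge4$. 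In small fiber dimension I would instead use $c_3(\E_{|f})=0$ and global generation to force $\E_{|f}\cong\O(a)\oplus\O^{r-1}$ or a similar form, which gives $a\le b$ from the dual bound.

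\emph{Case $a=0$.} Then $\det\E$ is trivial on the fibers, i.e.\ $\det\E=\pi^*M_1$ with $M_1$ nontrivial. By \cite[Lemma 5.1]{lo}, $\E\cong\pi^*\H$, and \cite[Lemma 4.1]{lo} (as in Remark \ref{qa2}) gives a linear Ulrich triple.

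\emph{Case $a=b$.} Here $\E_{|f}\cong\O_f(b)\oplus\O_f^{r-1}$. I would compare with the fibration $p$ from Lemma \ref{mapc}: $Z$ moves in $Y$ with $Z^2=0$, and the fibers of $\pi$ must map to points of $C$ or dominate $C$. If $b\ge2$, the $\P^{n-b}$-fibers are contracted by $p$, since $\rho$ of the fiber is $1$ and $p$ factors through $\pi$, so $C$ would be the image of $B$. Then $\overline Z=\pi^*(\cdot)$, and intersecting with $Y$ against a fiber yields $c_2(\E)$ with $c_2^2\ne0$ when $\xi$-degree $b\ge 2$, a contradiction. This forces $b=1$. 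For $b=1$, pushing forward gives $\pi_*(\E(-\xi))$ a line bundle $M$ on $B$, and hence a subbundle $\L=\xi+\pi^*M\to\E$ with quotient trivial on fibers, thus $\pi^*(\G(\det\F))$. The cohomology conditions $H^i(M)=H^i(\G)=0$ come from the Ulrich vanishings $H^i(\E(-p))=0$ via the Leray spectral sequence. One also has to show $\L$ and the quotient are Ulrich, which holds by \cite[Lemma 4.1]{lo} for the quotient, and by a direct computation for $\L$.

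\emph{Converse.} An extension of Ulrich bundles is Ulrich. Here $c_2(\E)=\xi\cdot\pi^*c_1(\G(\det\F))$, whose square vanishes since $b=1$, and $c_2(\E)\ne0$ because $\G$ has nonzero degree (forced by $\chi(\G)=0$ with positive rank).

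The main obstacle is proving $a\le b$ and excluding $b\ge2$ when $a=b$; I would attack both through the splitting type of $\E$ on fibers.
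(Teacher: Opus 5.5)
\textbf{Main gap: excluding $b\ge 2$ when $a=b$.} This step fails as written. Once $p=\psi\circ\pi$, you have $\overline Z=\pi^*M_2$ with $M_2=\psi^*\O_C(\Delta)$ pulled back from a curve. Hence $M_2^2=0$ and $c_2(\E)^2=[Y]^2\cdot\pi^*(M_2^2)=0$ for \emph{every} $b$. The $\xi$-coefficient of $Y$ plays no role, so no contradiction arises. The sub line bundle alone cannot give one either: $\O_{\P^2}\boxtimes\O_{\P^{n-2}}(2)=2\xi-2\pi^*\O_{\P^2}(1)$ is an Ulrich line bundle on the Segre embedding of $\P^2\times\P^{n-2}$.

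What is missing is an analysis of the quotient. The paper's argument runs as follows:
\begin{itemize}
\item $\L=\O_X(Y-\overline Z)=b\xi+\pi^*M$ is Ulrich, by the vanishing of Lemma \ref{mapc}(ii) extended to all $i$ via $\pi_*$.
\item Since $H^1(-\L)=0$, Lemma \ref{mapc}(ii) gives a nonzero map $\L\to\E$.
\item The quotient $\mathcal Q$ is then Ulrich with $\det\mathcal Q=\pi^*M_2$, so $\mathcal Q\cong\pi^*(\G(\det\F))$ by \cite[Lemmas 5.1 and 4.1]{lo}.
\item Finally $c_1(\mathcal Q)^2=\pi^*(M_2^2)=0$, and \cite[Cor.~5]{ls} rules this out when $b\ge 2$.
\end{itemize}
Without this step, or some substitute showing that an Ulrich bundle pulled back from $B$ with $c_1^2=0$ forces $\dim B=1$, part (iii) is not proved.

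\textbf{Second gap: the fiber splitting.} Both your bound $a\le b$ and your $b=1$ construction rest on $\E_{|f}\cong\O_f(a)\oplus\O_f^{\oplus(r-1)}$. Your argument for this only works when $n-b\ge 4$. For $n-b\in\{2,3\}$, you claim $c_3(\E_{|f})=0$ plus global generation forces the splitting; this is false. For instance, $\O_{\P^3}(1)^{\oplus 2}$ is globally generated with $c_3=0$ and $c_2\ne 0$, and on $\P^2$ the condition $c_3=0$ says nothing. These small cases include $b=1,n=4$ and $b=2,n\in\{4,5\}$, which are exactly the ones used later in the paper.

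This is repairable from what you already have. Since $\overline Z$ is pulled back from $B$, $c_2(\E)=[Y]\cdot[\overline Z]$ restricts to zero on every fiber. A general map $\O_f^{\oplus(r-1)}\to\E_{|f}$ therefore has empty degeneracy locus, and the splitting holds in all fiber dimensions (as in the proof of \eqref{res} in Lemma \ref{sup}).

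\textbf{What the fix buys you.} With the splitting in hand, your proof of $a\le b$ is correct and more elementary than the paper's. You restrict the Ulrich bundle $\E^*(K_X+(n+1)H)$ to $f$ and see that $\E_{|f}^*(b)$ is globally generated. The paper instead derives $H^i(\O_X((a-j)\xi+\pi^*M))=0$ for all $i$ and $1\le j\le n+a-b$. It then notes that the nonzero degree-$n$ polynomial $\chi(\O_X((a-m)\xi+\pi^*M))$ cannot have $n+a-b>n$ roots.

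Likewise, for $b=1$ your construction via $\pi_*(\E(-\xi))$ is a valid substitute for the paper's use of Lemma \ref{mapc}(ii) and \cite[Prop.~3.3.1]{cmrpl}. There $H^\bullet(M)=0$ comes from $j=1$, and $H^\bullet(\G)=0$ comes from $j=n$ through $R^{n-1}\pi_*\O_X(-n\xi)\cong(\det\F)^{-1}$.

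\textbf{Minor slip in the converse.} You say $c_2(\E)\ne 0$ because $\G$ has nonzero degree, but $\deg\G=(r-1)(g-1)$ vanishes when $g=1$. What you actually need is $\deg(\G(\det\F))=(r-1)(g-1+\deg\F)>0$.
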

\begin{proof}
Note that $d= \deg X \ge 2$, for otherwise $c_2(\E)=0$ by Lemma \ref{ulr}(iii). Moreover, $\E$ and $\det \E$ are globally generated by Lemma \ref{ulr}(i)-(ii), hence $a \ge 0$. If $a=0$, then $\det \E =\pi^*M_1$ so that \cite[Lemmas 5.1 and 4.1]{lo} imply that $(X, \O_X(1),\E)$ is linear Ulrich triple over $B$. This proves (ii) and the first inequality in (i). As for the rest, we will apply Lemma \ref{mapc}. We get that $c_3(\E)=0$ and there is a morphism with connected fibers $p : X \to C$ onto a smooth curve $C$. If $F \cong \P^{n-b}$ is any fiber of $\pi$, then $p_{|F} : F \to C$ must be constant because $n-b>1$, hence we can apply \cite[Lemma 1.15(b)]{de} and get a morphism $\psi : B \to C$ such that $p=\psi \circ \pi$. Also, it follows from Lemma \ref{mapc} that, if $Z$ is a general Ulrich subvariety associated to $\E$, there are divisors $Y \in |\det \E|$ and $\Delta$ on $C$ such that, if $\overline Z=p^* \Delta$, then 
$$H^i(\O_X(Y-jH-\overline Z))=0 \ \hbox{for} \ j \ge 1, 0 \le i \le n-2.$$ 
Hence, setting $M_2=\psi^*(\O_C(\Delta))$ and $M=M_1-M_2$, we find 
\begin{equation}
\label{laj-gen}
H^i(\O_{\P(\F)}((a-j)\xi+\pi^*M))=0 \ \hbox{for} \ j \ge 1, 0 \le i \le n-2.
\end{equation}
For any $j \le n+a-b$, we have that $a-j \ge b-n$, hence $R^q \pi_*(\O_{\P(\F)}((a-j)\xi))=0$ for $q >0$, so that 
\begin{equation}
\label{laj-gen3}
H^i(\O_{\P(\F)}((a-j)\xi+\pi^*M))=H^i(\pi_*(\O_{\P(\F)}((a-j)\xi))(M)) \ \hbox{for} \ i \ge 0 \ \hbox{and} \ 1 \le j \le n+a-b.
\end{equation}
Hence, observing that the latter vanish for $i \ge n-1$, combining with \eqref{laj-gen} we have that 
\begin{equation}
\label{laj-gen2}
H^i(\O_{\P(\F)}((a-j)\xi+\pi^*M))=0 \ \hbox{for} \ i \ge 0 \ \hbox{and} \ 1 \le j \le n+a-b.
\end{equation}
This implies that $\chi(\O_{\P(\F)}((a-j)\xi+\pi^*M))=0$ for $1 \le j \le n+a-b$. Now assume that $b<a$. Since $\chi(\O_{\P(\F)}((a-m)\xi+\pi^*M))$ is a non-zero polynomial in $m$, it has at most $n$ roots, while $n+a-b>n$, a contradiction. This proves (i). If $a=b$, \eqref{laj-gen2} shows that $\L$ is Ulrich and \eqref{laj-gen3} together with \eqref{laj-gen} give that $H^i(M \otimes S^{b-j} \F)=0$ for $i \ge 0$ and $1 \le j \le b$. Moreover, observe that $\L=\O_X(Y-\overline Z)$ and $H^1(-\L)=H^1(-b\xi - \pi^*M)=0$ because $R^q \pi_*(-b\xi - \pi^*M)=0$ for $q \le 1$. Hence Lemma \ref{mapc}(ii) gives that $H^0(\E(-\L)) \ne 0$ and we get an exact sequence
$$0 \to \L \to \E \to \mathcal Q \to 0$$
where $\mathcal Q$ is the quotient sheaf. Since $\L$ and $\E$ are Ulrich vector bundles for $(X, \xi)$, then so is $\mathcal Q$ by \cite[Prop.~3.3.1]{cmrpl}. Moreover, $\det \mathcal Q = \pi^* M_2$ and then it follows from \cite[Lemmas 5.1 and 4.1]{lo} that $\mathcal Q \cong \pi^*(\G(\det \F))$, where $\G$ is a rank $r-1$ vector bundle on $B$ such that $H^i(\G \otimes S^k \F^*)=0$ for all $i \ge 0, 0 \le k \le b-1$. Also, $c_1(\G(\det \F))=M_2$ and $c_1(\G(\det \F))^2=M_2^2=0$. If $b \ge 2$, \cite[Cor. 5]{ls} gives a contradiction. This proves (iii). The last assertion follows from \cite[Ex.~10.1]{blv}.
\end{proof}

When $(X, \O_X(1))$ is a linear $\P^{n-1}$-bundle over a smooth curve and $n \ge 4$, we will see that the Ulrich bundles with $c_2(\E) \ne 0, c_2(\E)^2=0$ can be precisely classified.

\begin{lemma}
\label{pf}
Let $B$ be a smooth irreducible curve of genus $g$ and let $\F$ be a rank $n \ge 4$ very ample bundle on $B$. Let $X=\P(\F) \subset \P H^0(H)$, where $H=\xi$ is the tautological line bundle and let $\pi : X \to B$ be the bundle map. Let $\E$ be a rank $r$ Ulrich bundle with $c_2(\E) \ne 0$ and $c_2(\E)^2=0$. Then we have:
\begin{itemize}
\item[(i)] If $n \ge 5$ or if $n=4$ and $\det \E$ is ample, there are a line bundle $M \in \Pic^{g-1}(B)$ with $H^i(M)=0$ for $i \ge 0$ and a rank $r-1$ bundle $\G$ on $B$ with $H^i(\G)=0$ for $i \ge 0$ so that, if $\L=\xi+\pi^*M$, then $\E$ sits in an exact sequence
$$0 \to \L \to \E \to \pi^*(\G(\det \F)) \to 0.$$ 
Moreover, $c_3(\E)=0$ and $(X, \O_X(1), \E)$ is as in \cite[Ex.~10.1]{blv}. 
\item[(ii)] If $n=4$ and $\det \E$ is not ample, then $(X, \O_X(1), \E) \cong (\P^1 \times \P^3, \O_{\P^1}(1) \boxtimes  \O_{\P^3}(1), \pi_2^*(\O_{\P^3}(1))^{\oplus r})$, where $\pi_2 : X \cong \P^1 \times \P^3 \to \P^3$ is the second projection and $c_3(\E) \ne 0$.
\end{itemize}
Vice versa, any vector bundle $\E$ in (i) or (ii) is a rank $r$ Ulrich bundle with $c_2(\E) \ne 0, c_2(\E)^2=0$. 
\end{lemma}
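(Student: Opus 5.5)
The plan is to split according to the $(n-4)$-ampleness of $\det \E$ and to apply Lemma \ref{pfgen} with $b=1$ when it holds. First, $\Pic(X) \cong \Z\xi \oplus \pi^*\Pic(B)$, so we write $\det \E = a\xi+\pi^*M_1$. When $n \ge 5$ we have $n-4 \ge 1 = \dim B$. If $\det \E$ is not $1$-ample, Remark \ref{qa2}(i) makes $(X,\O_X(1),\E)$ a linear Ulrich triple over the curve $B$. Then $\E = \pi^*(\G(\det \F))$ and $c_2(\E)=0$, which is a contradiction. Hence $\det \E$ is ample, and in particular $(n-4)$-ample. When $n=4$, being $(n-4)$-ample means being ample, which is exactly the hypothesis of (i). In both cases Lemma \ref{pfgen} applies and gives $c_3(\E)=0$ and $0 \le a \le 1$. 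If $a=0$, Lemma \ref{pfgen}(ii) produces a linear Ulrich triple, and again $c_2(\E)=0$, which is excluded. So $a=1=b$, and Lemma \ref{pfgen}(iii) gives the extension $0 \to \L \to \E \to \pi^*(\G(\det \F)) \to 0$ with $H^i(M)=H^i(\G)=0$ for all $i$. Since $\chi(M)=0$, Riemann--Roch forces $\deg M = g-1$, so $M \in \Pic^{g-1}(B)$. This is exactly the setting of \cite[Ex.~10.1]{blv}, and (i) follows.

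For (ii), let $n=4$ and assume $\det \E$ is not ample, that is, not $0$-ample. By Lemma \ref{q} there is a line $M \subset X$ with $\E_{|M}$ trivial. By Remark \ref{qa2}(ii), either we again have a linear Ulrich triple, which is excluded because $c_2(\E)\neq 0$, or $B \cong \P^1$ and $M$ is a section with $\xi\cdot M=1$. With $\F \cong \oplus\O_{\P^1}(a_i)$ and all $a_i \ge 1$ (by very ampleness), a section of degree $1$ forces some $a_i=1$.

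The next step is to use $c_2(\E)^2=0$ together with $c_2(\E)\ne0$ to show that $\F=\O_{\P^1}(1)^{\oplus 4}$, so that $X \cong \P^1\times\P^3$. I expect this to be the main obstacle. The first thing I would try is to apply Theorem \ref{c1^4=0} and Corollary \ref{c22=0} in the case $n=4$. If $c_1(\E)^4=0$, we land in the list of \cite[Thm.~1]{lms1}, and on a $\P^3$-bundle over $\P^1$ the relevant case there is $\P^1\times\P^3$ with $\E=\pi_2^*(\O_{\P^3}(1))^{\oplus r}$. If instead $c_1(\E)^4\ne0$, I would rerun the mechanism of Lemma \ref{mapc}: the curves where $\det\E$ has degree $0$ are contracted, and the fibers of $\Phi_\E$ through the lines $M$ then force $\det\E$ to contract a positive-dimensional family. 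Combining this with the Dichotomy Lemma \cite[Lemma 2.14]{lms1} should show that $\det \E$ is the pullback from the second factor of a product structure. Concretely, I would prove that the lines $M$ cover $X$, giving a second fibration $X\to\P^3$, hence $X\cong\P^1\times\P^3$ with $\det\E = r\,\pi_2^*\O(1)$. Then \cite[Lemma 5.1]{lo} (applied to the $\P^1$-bundle $\pi_2$) shows that $\E$ is a pullback of an Ulrich-type bundle from $\P^3$, and Lemma \ref{ulr}(iii) identifies that bundle as $\O_{\P^3}(1)^{\oplus r}$. Finally $c_3(\E)=\binom r3\pi_2^*h^3\neq 0$ as long as $r\ge3$. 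For $r=2$ I would check this directly, possibly with an extra remark.

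For the converse, (i) is \cite[Ex.~10.1]{blv}. For (ii), $\pi_2^*\O_{\P^3}(1)$ is Ulrich on $\P^1\times\P^3$ by the Künneth formula. Its second Chern class is $c_2=\binom r2 h^2\ne0$, and $c_2^2$ involves $h^4=0$, so $c_2(\E)^2=0$.
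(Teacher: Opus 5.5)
Part (i) and the converse are fine and follow the paper. You reduce to Lemma \ref{pfgen} with $b=1$, exclude $a=0$, and read off $\deg M=g-1$ from $\chi(M)=0$. One small slip: ruling out ``not $1$-ample'' shows that $\det\E$ is $1$-ample, not ample. This is still $(n-4)$-ample for $n\ge 5$, so nothing breaks.

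Part (ii) has a genuine gap, exactly at the step you flag: nothing in the proposal proves $\F\cong\O_{\P^1}(1)^{\oplus 4}$.

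\begin{itemize}
\item The branch $c_1(\E)^4=0$ rests on an unchecked reading of the list in \cite[Thm.~1]{lms1}.
\item The branch $c_1(\E)^4\neq 0$ is not an argument. Lemma \ref{mapc}, and the extension step \cite[Thm.~5.2.1]{bs} behind it, need $\det\E$ to be $(n-4)$-ample, i.e.\ ample for $n=4$. That is precisely what fails in (ii).
\item You give no reason why the trivial section lines should cover $X$ or yield a morphism $X\to\P^3$.
\end{itemize}

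The missing idea is a short computation that makes any case split unnecessary.

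\begin{itemize}
\item Let $f$ be the fiber class, $d=\xi^4=\deg\F$, so $K_X=-4\xi+(d-2)f$, and write $\det\E=a\xi+m_1f$.
\item The section line $L$ with $\E_{|L}$ trivial satisfies $\xi\cdot L=f\cdot L=1$. Hence $0=\det\E\cdot L=a+m_1$, so $\det\E=a(\xi-f)$.
\item Lemma \ref{ulr}(v) then reads $a(d-1)=r(d-1)$, so $a=r$.
\item Pass to the Ulrich bundle $\E^*(K_X+5\xi)$ of Lemma \ref{ulr}(viii). Its determinant is $r(d-1)f$, which is globally generated, nontrivial and pulled back from $B$.
\item By \cite[Lemmas 5.1 and 4.1]{lo}, this bundle equals $\pi^*(\G(\det\F))$ with $H^i(\G)=0$ for all $i$, which forces $\G\cong\O_{\P^1}(-1)^{\oplus r}$.
\item Unwinding gives $\E\cong(\xi-f)^{\oplus r}$. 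Then $0=c_2(\E)^2=\binom{r}{2}^2(\xi-f)^4=\binom{r}{2}^2(d-4)$ gives $d=4$.
\item So all $a_i=1$, $X\cong\P^1\times\P^3$ and $\xi-f=\pi_2^*\O_{\P^3}(1)$.
\end{itemize}

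In particular $c_1(\E)^4=r^4(\xi-f)^4=0$, so the case $c_1(\E)^4\neq 0$ you were worried about never occurs.

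For $r=2$ there is nothing to ``check directly''. A rank $2$ bundle has $c_3(\E)=0$, and $\pi_2^*\O_{\P^3}(1)^{\oplus 2}$ is a genuine example. So the claim $c_3(\E)\neq 0$ in (ii) holds only for $r\ge 3$. The same applies to the paper's own formula $c_3(\E)=\binom{r}{3}[\P^1\times\{z\}]\neq 0$.
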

\begin{proof}
Note that $c_2(\E) \ne 0$ implies that $(X,\O_X(1),\E)$ is not a linear Ulrich triple over $B$ and, by Lemma \ref{ulr}(iii), that $d=\xi^n \ge 2$. To see (i), assume that either $n \ge 5$ or that $n=4$ and $\det \E$ is ample. Observe that $\det \E$ is $(n-4)$-ample, for otherwise $n \ge 5$ and $\det \E$ is not $1$-ample, but then Remark \ref{qa2}(i) would give that $(X,\O_X(1),\E)$ is a linear Ulrich triple over $B$. Hence, we can apply Lemma \ref{pfgen} and we get that $c_3(\E)=0$ and that (i) holds. Now, suppose that $n=4$ and $\det \E$ is not ample. It follows by Remark \ref{qa2}(ii) that $B \cong \P^1$ and there is a line $L \subset X$ such that $\pi_{|L} : L \to \P^1$ is an isomorphism and $\E_{|L}$ is trivial. Letting $f$ be the class of a fiber we have that $d=\xi^n=\deg \F, f^2=0, \xi^{n-1}f=1$ and $K_X=-n\xi+(d-2)f$. Writing $\det \E=a\xi+m_1f$, for some $m_1 \in \Z$, and using $\xi \cdot L = f \cdot L =1$ and $(\det \E) \cdot L = 0$, we get that $m_1=-a$. Now, Lemma \ref{ulr}(v) gives that
$$a(d-1)=a(\xi-f)\xi^{n-1}=c_1(\E)\xi^{n-1}=\frac{r}{2}(K_X+(n+1)\xi)\xi^{n-1}=\frac{r}{2}(\xi+(d-2)f)\xi^{n-1}=r(d-1)$$
and therefore $a=r$ and $\det \E = r (\xi-f)$. Note that $\E^*(K_X+(n+1)\xi)$ is Ulrich by Lemma \ref{ulr}(viii) and
$$\det (\E^*(K_X+(n+1)\xi))=r(d-1)f=\pi^*(\O_{\P^1}(r(d-1)).$$
Since $\det (\E^*(K_X+(n+1)\xi))$ is globally generated and non trivial by Lemma \ref{ulr}(ii), it follows from \cite[Lemmas 5.1 and 4.1]{lo} that 
\begin{equation}
\label{ud}
\E^*(K_X+(n+1)\xi) \cong \pi^*(\G(\det \F))
\end{equation}
where $\G$ is a rank $r$ vector bundle on $\P^1$ such that $H^i(\G)=0$ for every $i \ge0$. Therefore $\G \cong \O_{\P^1}(-1)^{\oplus r}$ and we deduce from \eqref{ud} that 
$$\E \cong (\xi-f)^{\oplus r}.$$
Hence 
$$0 = c_2(\E)^2 = \binom{r}{2}^2(\xi-f)^4=\binom{r}{2}^2(d-4)$$
so that $d=4$. Since $\F \cong \bigoplus_{i=1}^4 \O_{\P^1}(a_i)$ with $a_i \ge 1$ for every $i$, we get that $a_i=1$ for every $i$ and then $(X, \O_X(1)) \cong (\P^1 \times \P^3, \O_{\P^1}(1) \boxtimes \O_{\P^3}(1))$. Now $\xi-f = \pi_2^*(\O_{\P^3}(1))$ and we conclude that $\E \cong \pi_2^*(\O_{\P^3}(1))^{\oplus r}$. Also, $c_3(\E)=\binom{r}{3}[\P^1 \times \{z\}] \ne 0$.
 
The last assertion follows from \cite[Ex.~10.1]{blv} and \cite[(3.5)]{be}.
\end{proof}

When $n \ge 4$ and $(X, \O_X(1))$ is a quadric fibration over a smooth curve $B$, we will see that there are no Ulrich bundles with $c_2(\E) \ne 0, c_2(\E)^2=0$, unless $n=4$ and $B \cong \P^1$.

\begin{lemma}
\label{qf}
Let $n \ge 4$ and let $(X, \O_X(1))$ be a hyperquadric fibration $\pi : X \to B$ over a smooth curve $B$. Let $\E$ be an Ulrich bundle with $c_2(\E) \ne 0, c_2(\E)^2=0$. Then $n=4, \det \E$ is not ample and $B \cong \P^1$. 
\end{lemma}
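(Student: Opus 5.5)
Let $F$ denote a general fiber of $\pi$; it is a smooth quadric $Q_{n-1}$ of dimension $n-1 \ge 3$, so $\Pic(F) \cong \Z \O_F(1)$. Since $\pi$ is flat with connected fibers, Lemma \ref{qa} gives $\Pic(X) \cong \Z H \oplus \pi^*\Pic(B)$. I would therefore write $\det \E = aH + \pi^*M_1$ and follow the strategy of Lemma \ref{pfgen}. The first task is to reduce to the case where $\det \E$ is $(n-4)$-ample.

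\emph{Step 1: the non-$(n-4)$-ample case.} Assume $\det \E$ is not $(n-4)$-ample. Lemma \ref{q} gives a linear space $M \cong \P^{n-3} \subset X$ with $\E_{|M}$ trivial. If $n \ge 5$, then $\dim M \ge 2$, so $\pi_{|M}$ is constant and $M$ lies in a fiber. Lemma \ref{qa}(i) (with $m=1$, since ``not $(n-4)$-ample'' implies ``not $1$-ample'' when $n\ge5$) then gives $\E \cong \pi^*\H$ for a bundle $\H$ on the curve $B$. This forces $c_2(\E)=0$, a contradiction. If $n=4$, ``not $0$-ample'' just means $\det \E$ is not ample, which is what we want to prove. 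Moreover Lemma \ref{qa}(ii) with $m=1$ gives either $\E \cong \pi^*\H$ (again excluded, since $c_2(\E)\neq0$) or $B \cong \P^1$. So the whole statement follows once the case where $\det \E$ is $(n-4)$-ample is ruled out.

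\emph{Step 2: the $(n-4)$-ample case.} Assume $\det \E$ is $(n-4)$-ample. Lemma \ref{mapc} gives $c_3(\E)=0$ and a morphism $p:X\to C$ with connected fibers onto a smooth curve. The fibers of $\pi$ have dimension $\ge 3$ and Picard number $1$, so $p$ restricted to any fiber of $\pi$ is constant (also on singular quadric fibers, by connectedness and rigidity, e.g.\ \cite[Lemma 1.15(b)]{de}). Hence $p=\psi\circ\pi$ with $\psi : B \to C$. Now Lemma \ref{mapc}(ii) yields
$$H^i(\O_X((a-j)H+\pi^*M))=0 \quad \text{for } j\ge1,\ 0\le i\le n-2,$$
where $M=M_1-\psi^*\Delta$. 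Here $R^q\pi_*\O_X(kH)=0$ for $q>0$ and $k\ge -(n-2)$ (quadric fibers of dimension $n-1$ have $K_F=-(n-1)H$). So for $1\le j\le a+n-2$ these cohomologies reduce to cohomology on the curve $B$ of $\pi_*\O_X((a-j)H)\otimes M$. This sheaf vanishes for $a-j<0$ and has no cohomology in degree $\ge 2$. Thus all $H^i$ vanish, and $\chi(\O_X((a-m)H+\pi^*M))=0$ for $1\le m\le a+n-2$. Since this Euler characteristic is a nonzero polynomial of degree $n$ in $m$, we get $a+n-2\le n$, i.e.\ $a\le 2$.

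\emph{Step 3: exclude $a=0,1,2$.} If $a=0$, then $\det\E=\pi^*M_1$, and as in the proof of Lemma \ref{qa} (using \cite[Lemma 5.1]{lo}) $\E$ is a pullback from $B$, so $c_2(\E)=0$, a contradiction. If $a\ge1$, restrict to a general smooth fiber $F\cong Q_{n-1}$. Then $\E_{|F}$ is globally generated with $\det = \O_F(a)$. Since $\overline Z=p^*\Delta$ is a union of fibers of $\pi$, $Z=\overline Z\cap Y$ is a union of divisors $Y\cap F_t$ in fibers, while $c_2(\E_{|F})$ is the class of $Z\cap F$, which is empty for general $F$. So $c_2(\E_{|F})=0$ and likewise $c_i(\E_{|F})=0$ for $i \ge 2$. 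Hence on $Q_{n-1}$ we get $c(\E_{|F})=1+aH$, which forces $a^2H^2=c_1^2=2c_2+(\text{Segre part}) $ in a way incompatible with $H^2\neq0$ in $A^2(Q_{n-1})$ for $n-1\ge3$. More concretely, a globally generated bundle with $c_2=0$ has $D_{r-2}(\varphi)=\emptyset$ for general $\varphi$, so it is an extension $0\to\O^{r-1}\to\E_{|F}\to\O_F(a)\to0$. This extension splits because $H^1(\O_F(-a))=0$, so $\E_{|F}\cong\O_F^{r-1}\oplus\O_F(a)$. Since $\E_{|F}$ must be Ulrich-type (its restriction to linear sections has no $H^0(-1)$ twists issue), one then checks via Lemma \ref{ulr}(v) and the Ulrich vanishing $H^{n-1}(\E(-n))$ restricted fiberwise that this is impossible. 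The computation I would attempt is to compare $c_1(\E)H^{n-1}=\frac r2(K_X+(n+1)H)H^{n-1}$, with $K_X=-(n-1)H+\pi^*(\cdot)$, against $a\le2$ and the fiber degree 2.

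The main obstacle is Step 3 for $a=1,2$: finding the exact numerical or cohomological contradiction. Plausibly one uses the restricted identity $H^i(\O_F((a-j)H))=0$, which for $a=1$, $j=1$ would give $H^0(\O_F)=0$, immediately absurd. So I would first check that the vanishing from Step 2 descends to fibers; this would kill $a\ge1$ directly.
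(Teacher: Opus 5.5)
\paragraph{Verdict: there is a genuine gap.} Your Steps 1 and 2 are fine.

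\paragraph{What works.} Step 1 is exactly the paper's reduction. Lemma~\ref{qa}(i) disposes of $n\ge5$ with $\det\E$ not $(n-4)$-ample. Lemma~\ref{qa}(ii) gives $B\cong\P^1$ when $n=4$ and $\det\E$ is not ample. Your root count in Step 2 correctly yields $a\le 2$; the paper instead aims directly at $a=1$.

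\paragraph{The gap.} It is Step 3 for $a\in\{1,2\}$, which carries all the content of the lemma and which you leave open. Moreover, both shortcuts you suggest fail.
\begin{itemize}
\item The vanishing on $X$ does not descend to fibers. For $a=j=1$ it only says $H^i(X,\pi^*M)=H^i(B,M)=0$ for all $i$, i.e.\ $\deg M=g-1$ and $M$ has no cohomology. This is perfectly compatible with $H^0(\O_F)\neq0$.
\item You correctly derive the splitting $\E_{|F}\cong\O_F^{\oplus(r-1)}\oplus\O_F(a)$, but it is not contradictory. A fiber of $\pi$ is not a linear section, so $\E_{|F}$ need not be Ulrich. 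In Lemma~\ref{pf}(i) exactly $\E_{|F}\cong\O_F^{\oplus(r-1)}\oplus\O_F(1)$ occurs on the $\P^{n-1}$-fibers.
\item Your remark that $c(\E_{|F})=1+aH$ is incompatible with $H^2\neq 0$ is likewise not an argument.
\end{itemize}

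\paragraph{What is needed.} You need genuinely global information on the fibration. Let $\F=\pi_*\O_X(H)$, of rank $n+1$ and degree $e$. It is globally generated, so $e\ge0$, and $X$ embeds fiberwise in $\P(\F)$ (with tautological bundle $\xi$), with $K_X\equiv-(n-1)H+(d+2g-2-e)F$ and $H^{n-1}F=2$.

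For $a=2$:
\begin{itemize}
\item Your vanishing with $j=2$ gives $H^\bullet(M)=0$, so $\deg M=g-1$.
\item With $j=1$ it gives $H^\bullet(\F\otimes M)=0$, so $0=\chi(\F\otimes M)=e$.
\item Then $\F$ is trivial and $\P(\F)\cong B\times\P^n$. The restriction $H^0(\xi)\to H^0(H)$ is an isomorphism, so $\varphi_H$ is the restriction of $\varphi_\xi$, which contracts each section $B\times\{y\}$.
\item Hence $d=X\cdot(B\times\{y\})\le1$, contradicting $c_2(\E)\ne0$.
\end{itemize}

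For $a=1$ there is no cohomological shortcut, and you must do the Riemann--Roch numerics:
\begin{itemize}
\item $c_2(\E)=[Y]\cdot[\overline Z]=m_2HF$, where $\overline Z\sim\pi^*M_2$ and $m_2=\deg M_2$.
\item Lemma~\ref{ulr}(v) determines $m_1=\deg M_1$, and Lemma~\ref{ulr}(vi) determines $m_2$.
\item Plugging these into $m_1-m_2=\deg M=g-1$ gives $d+e(r-2)=0$. This is impossible since $d>0$, $e\ge0$ and $r\ge2$.
\end{itemize}
Without these two arguments the $(n-4)$-ample case is not excluded, so the proposal does not prove the lemma.
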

\begin{proof}
First, observe that if $n \ge 5$, then $\det \E$ is $(n-4)$-ample. In fact, if not, $\det \E$ is not $1$-ample and Lemma \ref{qa}(i) implies that $\E \cong \pi^*\H$ and therefore $c_2(\E)=0$, a contradiction.

Now assume that either $n \ge 5$ or $n=4$ and $\det \E$ is ample. Then $\det \E$ is $(n-4)$-ample, hence Lemma \ref{mapc} gives a morphism with connected fibers $p : X \to C$ onto a smooth curve $C$. Let $F$ be any fiber of $\pi$, so that $\Pic F \cong \Z \O_F(1)$ (see for example \cite[Cor.~2.3.4]{bs}). Then $p_{|F} : F \to C$ must be constant, otherwise there would be a non trivial line bundle $N$ on $F$ with $N^2=0$, a contradiction. Hence $p$ contracts all fibers of $\pi$ to a point, so that \cite[Lemma 1.15(b)]{de} gives a morphism $\psi : B \to C$ such that $p=\psi \circ \pi$. Since $p$ has connected fibers, it follows that $\psi$ is an isomorphism and we can assume that $\pi=p$.

Using Lemma \ref{qa}, we have that 
$$\det \E=cH+\pi^*M_1$$
for some $M_1 \in \Pic(B)$ and set $m_1= \deg M_1$. Since $\det \E$ is globally generated and non trivial by Lemma \ref{ulr}(ii), we have that $c \ge 1$: Indeed, if $L$ is any line in a fiber $F$, then $0 \le \det \E \cdot L = c$ and if $c=0$ we have that $c_1(\E)^2=0$, giving, by \cite[Lemma 4.2(i)]{blv}, the contradiction $c_2(\E)=0$.

Recall now from \cite[\S 2]{la} that $\F=\pi_*\O_X(H)$ is a rank $n+1$ vector bundle on $B$ such that $X$ can be embedded fiberwise in $\P(\F)$ as a divisor of numerical class $2\xi+(d-2e)\widetilde F$, where $\xi$ is the tautological line bundle of $\P(\F)$, $\widetilde F$ is a fiber of the morphism $\P(\F) \to B, \xi_{|X}=H, e = \deg \F$ and $d=H^n$. Moreover, if $g$ is the genus of $B$, we have as in \cite[\S 2]{la}, that
$$F^2=0, \ H^{n-1}F=2 \ \hbox{and} \ K_X \equiv -(n-1)H+(d+2g-2-e)F.$$ 
Furthermore, $\F$ is globally generated by \cite[\S 3]{lmp}, hence, in particular $e \ge 0$. 

We now claim that $c=1$. To this end, Lemma \ref{mapc} gives that, if $Z$ is a general Ulrich subvariety associated to $\E$, then $Z=\overline Z \cap Y$, for two divisors $Y \in |\det \E|$ and $\overline Z \in |\pi^*M_2|$, for some $M_2 \in \Pic(B)$. Also, we have by Lemma \ref{mapc}(ii) that
$$H^i(\O_X(Y-jH-\overline Z))=0 \ \hbox{for} \ 0 \le i \le 1 \ \hbox{and} \ j \ge 1$$
that is, setting $M=M_1-M_2$,
$$H^i(\O_X((c-j)H+\pi^*M))=0 \ \hbox{for} \ 0 \le i \le 1 \ \hbox{and} \ j \ge 1.$$
Since $R^1 \pi_*(\O_X((c-j)H+\pi^*M))=0$ because $H^1(\O_Q(c-j))=0$ on any fiber $Q$, we find that, 
\begin{equation}
\label{laj2}
H^i(\pi_*(\O_X((c-j)H))(M))=0 \ \hbox{for} \ 0 \le i \le 1 \ \hbox{and} \ j \ge 1.
\end{equation}
Setting $j=c$ in \eqref{laj2}, we get that
\begin{equation}
\label{rrc2}
H^i(M)=0 \ \hbox{for} \ i \ge 0
\end{equation}
and in particular, setting $m_2= \deg M_2$,
\begin{equation}
\label{dm}
m_1-m_2=\deg M = g-1.
\end{equation} 
If $c \ge 2$, setting $j=c-1$ in \eqref{laj2}, we have that $H^i(\F(M))=0$ for $i \ge 0$ and therefore $\chi(\F(M))=0$, that is, using \eqref{dm}, $e+(n+1)(g-1)=(n+1)(g-1)$, so that $e=0$. Then, being $\F$ globally generated, we have that $\F \cong \O_B^{\oplus (n+1)}$, $\P(\F) \cong B \times \P^n$ with $\pi=\pi_1$ the first projection and $\xi=\pi_2^*\O_{\P^n}(1)$, where $\pi_2$ is the second projection. Now let $y \in \P^n$ and consider the section $B \times \{y\} \subset B \times \P^n$. We claim that $B \times \{y\}$ can meet $X$ in at most one point. In fact, consider the restriction map $H^0(\xi) \to H^0(H)$. Since $X \sim  2\xi+\widetilde \pi^*N$ for some $N \in \Pic(B)$, we have that $H^0(\xi-X)=H^0(-\xi-\widetilde \pi^*N)=0$. Moreover, $h^0(\xi)=h^0(\F)=h^0(\pi_*H)=h^0(H)$ and therefore the restriction map $H^0(\xi) \to H^0(H)$ is an isomorphism. It follows that the restriction to $X$ of the morphism $\varphi_{\xi} : \P(\F) \to \P H^0(\xi)$ is the embedding $\varphi_H : X \to \P H^0(H)$. Since $\varphi_{\xi}$ contracts $B \times \{y\}$ to a point, it follows that $B \times \{y\}$ can meet $X$ in at most one point. Therefore 
$$d=(2\xi+d\widetilde F) \cdot (B \times \{y\})=X \cdot (B \times \{y\}) \le 1$$
a contradiction since then $d=1$ and therefore $c_2(\E)=0$ by Lemma \ref{ulr}(iii). This proves that $c=1$. 

Now, Lemma \ref{ulr}(v) gives
$$\begin{aligned}[t] 
d+2m_1 & = (H+m_1F)H^{n-1}=c_1(\E)H^{n-1}=\frac{r}{2}(K_X+(n+1)H)H^{n-1} =
\\ & =\frac{r}{2}(2H+(d+2g-2-e)F))H^{n-1}=r(2d+2g-2-e)
\end{aligned}$$
and therefore
\begin{equation}
\label{11}
m_1=\frac{1}{2}(r(2d+2g-2-e)-d).
\end{equation}
We now compute $c_2(\E)H^{n-2}$ using Lemma \ref{ulr}(vi). First
\begin{equation}
\label{c21}
c_2(\E)=[Z]=[Y][\overline Z]=(H+m_1F)m_2F=m_2 HF.
\end{equation}
Using the standard exact sequences for $\Omega_{\P(\F)/B}$ and $\Omega_{\P(\F)}$, we find that 
$$c_1(\E)^2H^{n-2}=d+4m_1, \ c_1(\E)K_XH^{n-2}=-(d+2m_1)(n-1)+2(d+2g-2-e)$$
$$K_X^2H^{n-2}=(n-1)^2d-4(d+2g-2-e)(n-1)$$
$$c_2(X)=\frac{n^2-3n+4}{2}H^2+(-2+3d-4e+2g+2n-dn+en-2gn) HF$$
and then
$$c_2(X)H^{n-2}=\frac{d(n^2-3n+4)}{2}-4+6d-8e+4g+4n-2dn+2en-4gn.$$
Using the above, \eqref{c21} and Lemma \ref{ulr}(vi), we find
$$2m_2=m_2 H^{n-1}F=c_2(\E)H^{n-2}=\frac{1}{2}(4-3d+2e-4g-4r+4dr-3er+4gr)$$
so that
$$m_2=\frac{1}{4}(4-3d+2e-4g-4r+4dr-3er+4gr).$$
Hence, using the above, \eqref{dm} and \eqref{11}, we see that
$$\frac{1}{2}(r(2d+2g-2-e)-d)-g+1=m_1-g+1=m_2=\frac{1}{4}(4-3d+2e-4g-4r+4dr-3er+4gr)$$
that is $d+e(r-2)=0$, a contradiction since $r \ge 2$ because $c_2(\E) \ne 0$.

We therefore deduce that $n=4$ and $\det \E$ is not ample. Now, observing that, as above, we cannot have that $\E \cong \pi^*\G$, Lemma \ref{qa}(ii) implies that $B \cong \P^1$. 
\end{proof}

In the case $n=4, \det \E$ is not ample and $B \cong \P^1$, we do have an example, that is the unique one if $c_1(\E)^4=0$ by \cite[Prop.~2.17]{lms1}. We don't know if there are examples with $c_1(\E)^4>0$.

\begin{example}
If $(X, \O_X(1)) \cong (\P^1 \times Q_3, \O_{\P^1}(1) \boxtimes  \O_{Q_3}(1))$, then the first projection $\pi_1 : X \to \P^1$ gives a hyperquadric fibration. Let $\H$ be a direct sum of spinor bundles on $Q_3$ and let $\E= \pi_2^*(\H(1))$, where $\pi_2 : X \to Q_3$ is the second projection. Then $\E$ is an Ulrich bundle on $X$ with $c_1(\E)^4=0$ and $c_3(\E) \ne 0$.
\end{example}

When $(X, \O_X(1))$ is a linear $\P^3$-bundle over the plane, we will see that the Ulrich bundles with $c_2(\E) \ne 0, c_2(\E)^2=0$ and $\det \E$ is not $1$-ample can be precisely classified.

\begin{lemma}
\label{pf2}
Let $\F$ be a rank $4$ very ample bundle on a smooth surface $B$. Let $X=\P(\F) \subset \P H^0(H)$, where $H=\xi$ is the tautological line bundle and let $\pi : X \to B$ be the bundle map. Let $\E$ be a rank $r$ Ulrich bundle with $c_2(\E) \ne 0, c_2(\E)^2=0$ and assume that $\det \E$ is not $1$-ample. Then either $(X, \O_X(1), \E)$ is a linear Ulrich triple over $B$ or $B \cong \P^2$, $(X, \O_X(1), \E) \cong (\P^2 \times \P^3, \O_{\P^2}(1) \boxtimes \O_{\P^3}(1), \pi_2^*(\O_{\P^3}(2))^{\oplus r})$, where $\pi_2 : X \cong \P^2 \times \P^3 \to \P^3$ is the second projection and $c_3(\E) \ne 0$.
\end{lemma}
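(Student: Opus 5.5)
Since $\det \E$ is not $1$-ample, and $X$ is a linear $\P^3$-bundle over the surface $B$ (so $m=2$ in Remark \ref{qa2}), I would apply Remark \ref{qa2}(ii). Either $(X,\O_X(1),\E)$ is a linear Ulrich triple over $B$, and we are done, or $B\cong\P^2$ and there is a linear plane $M\cong\P^2$, $M\subset X$, which is a section of $\pi$ with $\E_{|M}$ trivial. I assume the second alternative from now on. The strategy then mimics the proof of Lemma \ref{pf}(ii). First determine $\det\E$ numerically from the triviality on $M$ and Lemma \ref{ulr}(v). Next use the dual Ulrich bundle $\E^*(K_X+5\xi)$ of Lemma \ref{ulr}(viii) to force $\E$ to be a direct sum of copies of a single line bundle. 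Finally use $c_2(\E)^2=0$ to pin down $\F$.

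\textbf{Determining $\det\E$.} Let $\ell=\pi^*\O_{\P^2}(1)$. Then $\Pic(X)=\Z\xi\oplus\Z\ell$, and I write $\det\E=a\xi+b\ell$. Since $M$ is a section, the restriction $\ell_{|M}$ is $\O_{\P^2}(1)$. Moreover $\xi_{|M}=\O_{\P^2}(s)$, where $M$ corresponds to a quotient $\F\to\O_{\P^2}(s)$; because $M$ is a linear plane, $s=1$. Triviality of $(\det\E)_{|M}$ then gives $b=-a$. Write $K_X=-4\xi+\pi^*(K_B+\det\F)=-4\xi+(c_1(\F)-3)\ell$, with $c_1(\F)=\deg\det\F$. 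Computing $c_1(\E)\xi^4$ via Lemma \ref{ulr}(v) with the relation $\xi^4=c_1\xi^3\ell-c_2\xi^2\ell^2$ on $\P(\F)$, I expect to obtain $a=r$, exactly as in Lemma \ref{pf}. Then $\det(\E^*(K_X+5\xi))=(c_1(\F)-3+r)\ell$ is a pull-back. By Lemma \ref{ulr}(ii) and \cite[Lemmas 5.1 and 4.1]{lo}, $\E^*(K_X+5\xi)\cong\pi^*(\G(\det\F))$ with $H^j(\G\otimes S^k\F^*)=0$ for $j\ge0$ and $k=0,1$. On $\P^2$ these vanishings should force $\G\cong\O_{\P^2}(-1)^{\oplus r}$ or $\G\cong\Omega(1)^{\oplus}$-type bundles. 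Using that $\E$ is trivial on $M$ and the value $a=r$, I expect to get $\G\cong\O_{\P^2}(-1)^{\oplus r}$, whence $\E\cong(\xi-\ell)^{\oplus r}$ twisted appropriately. Checking the statement: on $\P^2\times\P^3$, $\xi-\ell=\pi_2^*\O(1)$, yet the claimed bundle is $\pi_2^*\O_{\P^3}(2)$. So the bookkeeping must give $\E\cong(2\xi-2\ell)^{\oplus r}$, that is, $a=2r$. I will redo the Lemma \ref{ulr}(v) computation carefully to get $a=2r$, consistent with $\pi_2^*\O_{\P^3}(2)$ being Ulrich on $\P^2\times\P^3$.

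\textbf{Pinning down $\F$.} With $\E\cong L^{\oplus r}$ and $L=2(\xi-\ell)$, we have $c_2(\E)=\binom r2L^2$. Hence $0=c_2(\E)^2=\binom r2^2\cdot16(\xi-\ell)^4$, so $(\xi-\ell)^4=0$. Expanding with the Chern relations gives $(\xi-\ell)^4=d-4c_1(\F)+6$, where $d=\xi^5=c_1^2-c_2$. Combined with the vanishings for $\G$ and very ampleness ($\F(-1)$ is globally generated, since $\xi-\ell$ is nef, and $\F(-1)$ has a trivial quotient coming from $M$), I aim to show $c_1(\F(-1))=0$. That would give $\F(-1)\cong\O^{\oplus4}$ and $X\cong\P^2\times\P^3$. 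Then $c_3(\E)=\binom r3L^3\ne0$.

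\textbf{Main obstacle.} The main obstacle is this last step: passing from numerical constraints to $\F\cong\O_{\P^2}(1)^{\oplus4}$, together with identifying $\G$. For $\G$ I would first try Beilinson-type arguments on $\P^2$ with the vanishings $H^*(\G)=H^*(\G\otimes\F^*)=0$. For $\F$, I would use that $\F(-1)$ is globally generated with $c_2(\F(-1))$ controlled by $(\xi-\ell)^4=s_4(\F(-1))$-type terms, and that a nef bundle with vanishing top Segre class and $c_1>0$ is impossible on $\P^2$ unless $c_1=0$.
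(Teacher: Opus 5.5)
There is a genuine gap at the step where you determine $a$, and everything after it depends on that step. Unlike the curve case of Lemma \ref{pf}, Lemma \ref{ulr}(v) does not fix $a$ here. Writing $c_i=c_i(\F)$, it reads $a(c_1^2-c_2-c_1)=c_1(\E)\xi^4=\frac r2(K_X+6\xi)\xi^4=\frac r2(3c_1^2-2c_2-3c_1)$, so $a$ depends on $\F$. (Also, $n=5$, so the dual Ulrich bundle of Lemma \ref{ulr}(viii) is $\E^*(K_X+6\xi)$, not $\E^*(K_X+5\xi)$.)

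The determinant of that dual bundle is $(2r-a)\xi+(a+r(c_1-3))\ell$. This is a pull-back from $B$ only when $a=2r$, i.e.\ when $2c_2=c_1^2-c_1$. Otherwise \cite[Lemmas 5.1 and 4.1]{lo} do not apply, and you never reach $\E\cong L^{\oplus r}$. Saying you will ``redo the computation carefully to get $a=2r$'' reads the answer off the statement.

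If you use $c_2(\E)^2=0$ and $c_2(\E)\cdot[M]=0$ directly, writing $c_2(\E)=\alpha\xi^2+\beta\xi\ell+\gamma f$, you get $c_2=\frac{c_1^2}{4}+c_1-2$. Under this relation, $a=2r$ is equivalent to $(c_1-2)(c_1-4)=0$, i.e.\ to $c_1=4$, which is the whole content of the lemma. This relation alone does not force it: $\F=\O_{\P^2}(1)^{\oplus 2}\oplus\O_{\P^2}(2)^{\oplus 2}$ satisfies it with $c_1=6$, $c_2=13$, and Lemma \ref{ulr}(v) then gives $a=\frac{32r}{17}\neq 2r$.

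The missing ingredient is an argument excluding $c_1(\F)\ge 5$. The paper proceeds as follows:
\begin{itemize}
\item Restricting $\F$ to lines gives $c_1\ge 4$, with equality iff $\F\cong\O_{\P^2}(1)^{\oplus 4}$ by \cite[Thm.~I.3.2.1]{oss}.
\item For $c_1\ge 5$ one has $c_1(\E)^4\neq 0$, hence $c_3(\E)=0$ by Lemma \ref{c3no}(ii). Feeding the above form of $c_2(\E)$ and $c_3(\E)=0$ into Lemma \ref{ulr}(v)--(vii) gives a polynomial identity in $r,c_1$ with no integer solutions for $r\ge 2$, $c_1\ge 5$.
\item When $c_1=4$, one has $X\cong\P^2\times\P^3$, $c_1(\E)^4=0$ and $c_1(\E)^3\xi^2=a^3>0$, and \cite[Cor.~4.9]{ls} identifies $\E$.
\end{itemize}
Your dual-bundle argument would also work at that last point. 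There $a=2r$, and the vanishings $H^*(\G)=H^*(\G(-1))=0$ make $\G(1)$ Ulrich on $\P^2$, so $\G\cong\O_{\P^2}(-1)^{\oplus r}$.

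Finally, the step you flagged as the main obstacle is easy once $\E\cong(2\xi-2\ell)^{\oplus r}$ is known. Since $(\xi-\ell)^4\cdot\ell=c_1-4$, the condition $c_2(\E)^2=0$ gives $c_1=4$ at once. The real difficulty lies earlier, in getting to that form of $\E$.
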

\begin{proof}
Assume that $(X, \O_X(1), \E)$ is not a linear Ulrich triple over $B$. Since $\det \E$ is not $1$-ample, it follows from Remark \ref{qa2}(ii) that $B \cong \P^2$ and there is a linear space $P \cong \P^2, P \subset X$ such that $P$ is a section of $\pi$ and $\E_{|P}$ is trivial. Set $c_i(\F)=c_i H_{\P^2}^i, 1 \le i \le 2$ and $R = \pi^* \O_{\P^2}(1)$, so that, if $f$ is the numerical class of a fiber of $\pi$, we have
$$d=\xi^5=c_1^2-c_2, \ R^3=0, \ R^2=f, \ \xi^3f=1, \ \xi^4 R=c_1 \ \hbox{and} \ K_X=-4\xi+(c_1-3)R.$$ 

Write $\det \E = a\xi+bR$. If $L \subset P$ is a line, then, being $(\det \E)_{|P} \cong \O_P$, we see that $0 = \det \E \cdot L = a+b$, and therefore $b=-a$. Since $\det \E$ is non trivial by Lemma \ref{ulr}(ii), we have
\begin{equation}
\label{dete2}
\det \E = a(\xi-R), \ \hbox{with} \ a \ge 1.
\end{equation}
Now consider a line $L \subset \P^2$, so that $\F_{|L} \cong \bigoplus_{i=0}^4 \O_{\P^1}(a_i)$ with $a_1 \ge 1$ for all $i$ since $\F_{|L}$ is ample and $a_1+a_2+a_3+a_4=c_1(\F_{|L})=c_1(\F) \cdot L=c_1$. Therefore $c_1 \ge 4$ and equality holds if and only if $a_i=1$ for all $i$. From \eqref{dete2} we find that
\begin{equation}
\label{7}
c_1(\E)^4 \xi = a^4(\xi-R)^4 \xi =\frac{a^2(c_1-4)(3c_1-8)}{4}.
\end{equation}
If $c_1=4$, then $\F_{|L} \cong \bigoplus_{i=0}^4 \O_{\P^1}(1)$ for any line $L$. It follows from \cite[Thm.~I.3.2.1]{oss} that $\F \cong \O_{\P^2}(1)^{\oplus 4}$ and therefore $c_2=6$ and $(X, \O_X(1)) \cong (\P^2 \times \P^3, \O_{\P^2}(1) \boxtimes \O_{\P^3}(1))$. Moreover \eqref{7} gives that $c_1(\E)^4 \xi = 0$ and then $c_1(\E)^4=0$, hence $\det \E$ is not big. Since $c_1(\E)^3 \xi^2 = a^3(\xi-R)^3 \xi^2 =a^3 > 0$, it follows from \cite[Cor. 4.9]{ls} that $\E \cong \pi_2^*(\O_{\P^3}(2))^{\oplus r}$. Also, $c_3(\E)=8\binom{r}{3}[\P^2 \times \{z\}] \ne 0$.

Next, we will show that we reach a contradiction if $c_1 \ge 5$. 

Write $c_2(\E) = \alpha \xi^2+\beta \xi R + \gamma f$, for some $\alpha, \beta, \gamma \in \Z$. Since $c_2(\E)^2=0$
we get
\begin{equation}
\label{1}
\alpha^2 \xi^4+\beta^2 \xi^2 f+2\alpha \beta \xi^3 R + 2\alpha \gamma \xi^2f=0.
\end{equation}
Intersecting in \eqref{1} with $R$, we deduce that
\begin{equation}
\label{2}
\alpha(\alpha c_1+2 \beta)=0
\end{equation}
while, intersecting in \eqref{1} with $\xi$, we find, using \eqref{2}, that
\begin{equation}
\label{3}
\beta^2+2\alpha \gamma-c_2\alpha^2=0.
\end{equation}
From \eqref{2} we have that either $\alpha=0$ or $\alpha \ne 0$ and $\beta = -\frac{\alpha c_1}{2}$ and therefore, using \eqref{3}, either 
$$c_2(\E) = \gamma f$$ 
or 
$$c_2(\E) = \frac{\alpha}{8}(8\xi^2-4c_1 \xi R+(4c_2-c_1^2)f).$$
On the other hand, from $c_2(\E_{|P})=0$, we get that $c_2(\E)[P]=0$. Since $[P]f=1$, it cannot be that $c_2(\E) = \gamma f$, for otherwise $0=c_2(\E)[P] = \gamma$, and therefore $c_2(\E)=0$, a contradiction. Hence $\alpha \ne 0$, $c_2(\E) = \frac{\alpha}{8}(8\xi^2-4c_1 \xi R+(4c_2-c_1^2)f)$ and, using $c_2(\E)[P]=0, \xi^2 P=1, \xi R P =1$, we get that
$$\frac{\alpha}{8}(8-4c_1+4c_2-c_1^2)=0$$ 
that is
\begin{equation}
\label{4}
c_2=\frac{c_1^2}{4}+c_1-2.
\end{equation}
and therefore
\begin{equation}
\label{5}
c_2(\E) = \frac{\alpha}{2}(2\xi^2-c_1 \xi R+(c_1-2)f).
\end{equation}
Using \eqref{4}, we get
$$c_1(\E)\xi^4=a(\xi-R)\xi^4=\frac{a(3c_1^2-8c_1+8)}{4}$$
and
$$\frac{r}{2}(K_X+6\xi)\xi^4=\frac{r}{2}(2\xi+(c_1-3)R))\xi^4=\frac{r(5c_1^2-10c_1+8)}{4}$$
and therefore
Lemma \ref{ulr}(v) gives that
\begin{equation}
\label{6}
a=\frac{r(5c_1^2-10c_1+8)}{3c_1^2-8c_1+8}.
\end{equation}
We now compute $c_2(\E)\xi^3$ using Lemma \ref{ulr}(vi). Using the exact sequences for $\Omega_{X/\P^2}$ and $\Omega_X$, we get
$$c_2(X)=6\xi^2+(12-3c_1)\xi R+\frac{c_1^2-8c_1+4}{4}f.$$
Moreover, we have
$$c_1(\E)^2 \xi^3=\frac{3a^2(c_1-2)^2}{4}, \ c_1(\E)K_X \xi^3=a(-2c_1^2+4c_1-5)$$
$$K_X^2 \xi^3=5c_1^2+2c_1+41, \ c_2(X) \xi^3=\frac{7c_1^2+16c_1+52}{4}.$$
Using the above, \eqref{5} and Lemma \ref{ulr}(vi), we find
$$\frac{\alpha(c_1^2-2c_1+4)}{4}=c_2(\E)\xi^3=\frac{1}{8}(20a+12a^2-16ac_1-12a^2c_1+8ac_1^2+3a^2c_1^2- 
32r+38c_1r-21c_1^2r)$$
and therefore
$$\alpha=\frac{20a+12a^2-16ac_1-12a^2c_1+8ac_1^2+3a^2c_1^2-32r+38c_1r-21c_1^2r}{2(c_1^2-2c_1+4)}.$$
It follows from \eqref{7} that $c_1(\E)^4 \ne 0$, hence Lemma \ref{c3no} gives that $c_3(\E)=0$. We have
$$c_1(\E)c_2(\E)\xi^2=\frac{a \alpha(c_1-2)^2}{4}, \ c_1(\E)^3\xi^2=\frac{a^3(c_1-2) (3c_1-10)}{4}$$
$$c_1(\E)^2K_X\xi^2=-a^2(c_1-2)(2c_1-3), \ c_2(\E)K_X\xi^2=-\frac{\alpha(c_1^2-c_1+8)}{2}$$ 
$$c_1(\E)K_X^2\xi^2=a(5c_1^2-6c_1+17), \ c_1(\E)c_2(X)\xi^2=\frac{a(7c_1^2+4c_1+4)}{4},$$
$$K_Xc_2(X)\xi^2=-4c_1^2-13c_1-88.$$
Applying Lemma \ref{ulr}(vii) we get, dividing by $r(c_1-2)$,
$$\begin{aligned}[t] 
&32768-79872c_1+32256c_1^2+141824c_1^3-289536c_1^4+277920c_1^5-159400c_1^6+56064c_1^7 
\\ & -11214c_1^8+972c_1^9-36864r+73728c_1 r+37632c_1^2 r-317184c_1^3 r+524064c_1^4 r-472800c_1^5 r \\
& +263112c_1^6 r-90516c_1^7 r+17640c_1^8 r-1485c_1^9 r+4096r^2+7168c_1 r^2-68864c_1^2 r^2+164416c_1^3 r^2 \\
& -212480c_1^4 r^2+171280c_1^5 r^2-88400c_1^6 r^2+28300c_1^7 r^2-5000c_1^8 r^2+ 375c_1^9 r^2=0.
\end{aligned}$$
But this equation has no integer solutions when $r \ge 2, c_1 \ge 5$ (see \cite[Out(13)]{m}). This proves the lemma.
\end{proof}

We end the section by studying the case of a $\P^{n-2}$-bundle over a surface.

\begin{lemma}
\label{sup}
Let $B$ be a smooth irreducible surface and let $\F$ be a rank $n-1 \ge 3$ very ample bundle on $B$. Let $X=\P(\F) \subset \P H^0(H)$, where $H=\xi$ is the tautological line bundle and let $\pi : X \to B$ be the bundle map. Let $\E$ be a rank $r$ Ulrich bundle with $c_2(\E) \ne 0, c_2(\E)^2=0$. Then one of the following occurs:
\begin{itemize}
\item[(i)] $(X, \O_X(1), \E)$ is a linear Ulrich triple over $B$.
 \item[(ii)] $B \cong \P^2$, $(X, \O_X(1), \E) \cong (\P^2 \times \P^3, \O_{\P^2}(1) \boxtimes \O_{\P^3}(1), \pi_2^*(\O_{\P^3}(2))^{\oplus r})$, where $\pi_2 : X \cong \P^2 \times \P^3 \to \P^3$ is the second projection and $c_3(\E) \ne 0$. 
\item[(iii)] $n=4$ and $\det \E$ is not ample. 
\end{itemize}
\end{lemma}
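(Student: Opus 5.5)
We split according to the positivity of $\det \E$, using the preceding lemmas. Here $b=\dim B=2$ and $n \ge 4$, so $n \ge \max\{4,b+2\}$ and Lemma \ref{pfgen} is available.

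\emph{Case $\det \E$ is $(n-4)$-ample.} We apply Lemma \ref{pfgen} with $b=2$. Writing $\det \E = a\xi+\pi^*M_1$, the lemma gives $0 \le a \le 2$. If $a=0$, Lemma \ref{pfgen}(ii) puts us in case (i). If $a=b=2$, Lemma \ref{pfgen}(iii) forces $b=1$, a contradiction, so $a=2$ is impossible. The remaining value $a=1$ is the main obstacle, since Lemma \ref{pfgen} does not treat it directly.

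\emph{The case $a=1$.} First I would try to adapt the $\chi$-argument in the proof of Lemma \ref{pfgen}. There, \eqref{laj-gen2} gives vanishing of all cohomology of $\O_{\P(\F)}((1-j)\xi+\pi^*M)$ for $1 \le j \le n-1$. For $j=1$ this gives $H^i(M)=0$ for all $i$. For $2 \le j \le n-1$ the groups vanish automatically, since $1-j\in[2-n,-1]$. This alone gives no contradiction.

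Instead, I would exploit Lemma \ref{mapc}(ii). Here $\L:=\O_X(Y-\overline Z)=\xi+\pi^*M$ satisfies $H^i(\L(-j))=0$ for $1\le j\le n$, because $R\pi_*$ vanishes for $2-n\le 1-j\le -1$ and $j=n$ is handled by relative duality. Hence $\L$ is an Ulrich line bundle. I would check that $H^1(-\L)=0$ via $R^q\pi_*(-\xi)=0$. Then, as in Lemma \ref{pfgen}(iii), we get $0\to\L\to\E\to\mathcal Q\to0$ with $\mathcal Q$ Ulrich and $\det\mathcal Q=\pi^*M_2$. So $\mathcal Q\cong\pi^*(\G(\det\F))$ for a bundle $\G$ with $c_1(\G(\det\F))^2=M_2^2=0$, and $M_2\neq0$ since $c_2(\E)\ne0$. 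Since $b=2$, \cite[Cor.~5]{ls} gives a contradiction, exactly as in Lemma \ref{pfgen}. If this step is not airtight, the fallback is to use $c_2(\E)=[Y][\overline Z]$ with $\overline Z=\pi^*\psi^*\Delta$, $\Delta$ a divisor on the curve $C$. This expresses $c_2(\E)$ explicitly, and we would compare it with Lemma \ref{ulr}(v)--(vi) to derive a numerical contradiction.

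\emph{Case $\det \E$ is not $(n-4)$-ample.} If $n=4$, then $\det\E$ is not ample, which is case (iii). If $n\ge5$, then $\det\E$ is not $1$-ample. When $n\ge 6$, $\det\E$ is not $(n-4)$-ample with $n-4\ge 2=m$, so Remark \ref{qa2}(i) gives a linear Ulrich triple, which is case (i). When $n=5$, $\F$ has rank $4$ and $\det\E$ is not $1$-ample, so Lemma \ref{pf2} yields either case (i) or case (ii), including $c_3(\E)\ne0$.
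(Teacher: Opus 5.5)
You correctly reduce everything to the case $a=1$, and your handling of the other cases matches the paper: $a=0$ gives (i), $a=2$ is ruled out by Lemma \ref{pfgen}(iii), and the non-$(n-4)$-ample cases go to (iii), to Remark \ref{qa2}(i) for $n\ge 6$, and to Lemma \ref{pf2} for $n=5$. In the case $a=1$, however, the key claim that $\L=\xi+\pi^*M$ is Ulrich fails at $j=n$.

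Since $R^{n-2}\pi_*\O_X((1-n)\xi)\cong\O_B(-\det\F)$ and the other direct images vanish, you get $H^i(\L(-n))\cong H^{i-n+2}(B,M-\det\F)$. Relative duality gives only this identification, not a vanishing. Lemma \ref{mapc}(ii) covers only $i\le n-2$, i.e.\ only $H^0(M-\det\F)=0$. Nothing you have controls $H^1$ and $H^2$ of $M-\det\F$; the only vanishing available on $B$ is $H^\bullet(M)=0$, coming from $j=1$. This is exactly why Lemma \ref{pfgen}(iii) needs $a=b$: only then does the range $1\le j\le n+a-b$ in \eqref{laj-gen2} reach $j=n$. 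Without $\L$ Ulrich, you know neither that $\mathcal Q$ is Ulrich nor even that it is locally free. So \cite[Lemmas 5.1 and 4.1]{lo} and \cite[Cor.~5]{ls} cannot be invoked.

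Your fallback, comparing $c_2(\E)=[Y][\overline Z]$ with Lemma \ref{ulr}(v)--(vi), is not enough: there are too many free intersection numbers on $B$. The paper needs substantially more:
\begin{enumerate}
\item It proves by hand that $\mathcal Q$ is a vector bundle. It shows $\E_{|F}\cong\O_F^{\oplus(r-1)}\oplus\O_F(1)$ on every fiber $F$, using $c_2(\E_{|F})=0$. The section $\sigma$ spanning $H^0(\E(-\L))$ is unique up to scalars, since $h^0(\E(-\L))=1$ by $H^\bullet(\O_X(-\xi-\pi^*M))=0$. If $\sigma$ vanished at a point, then $\sigma_{|F}=0$ for the fiber through it. That contradicts $H^0(\I_{Z/X}(\overline Z))\cong H^0(\O_Y)$ together with $Y\cap F\ne\emptyset$.
\item It deduces $\mathcal Q\cong\pi^*\H$ from $\det\mathcal Q=\pi^*M_2$ and global generation.
\item It uses the Ulrich vanishing of $\E$ at $j=n-1$, where $\L(1-n)=(2-n)\xi+\pi^*M$ has no cohomology. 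This yields $H^\bullet(\H(-\det\F))=0$, hence $\chi(\H(-\det\F))=0$. This is the part of your missing vanishing that is actually available.
\item It combines this identity and $\chi(M)=0$ with Lemma \ref{ulr}(v)--(vii) and $c_3(\E)=0$. The result is \eqref{ult} and \eqref{ult2}, which force $n\ge 7$ and $n(3r-2)=15r-14$, a contradiction.
\end{enumerate}
The third Chern class identity (vii) and the extra Euler characteristic from step 3 are both essential inputs that your proposal lacks.
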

\begin{proof}
Arguing by contradiction, assume that we are not in any of the three cases (i)-(iii). 

We can write, for some $M_1 \in \Pic(B)$,  
\begin{equation}
\label{dete22}
\det \E = a \xi + \pi^*M_1.
\end{equation}
Note that $a \ge 1$ because $\det \E$ is globally generated by Lemma \ref{ulr}(ii), hence $a \ge 0$. If $a=0$ then $\det \E =\pi^*M_1$ and \cite[Lemmas 5.1 and 4.1]{lo} imply that $(X, \O_X(1),\E)$ is linear Ulrich triple over $B$, that we have excluded.

Now, observe that $\det \E$ is $(n-4)$-ample. In fact, this is just the assumption if $n=4$ while, if $n=5$, it follows from Lemma \ref{pf2}. When $n \ge 6$, if $\det \E$ is not $(n-4)$-ample, then it is not $2$-ample and Lemma \ref{qa}(i) implies that $\E \cong \pi^*\H$ and therefore \cite[Lemmas 5.1 and 4.1]{lo} give that $(X, \O_X(1), \E)$ is a linear Ulrich triple over $B$, that we have excluded.

Hence $\det \E$ is $(n-4)$-ample and Lemma \ref{pfgen} applies, so that $c_3(\E)=0$ and $a=1$. Since $d \ge 2$, for otherwise $c_2(\E)=0$ by Lemma \ref{ulr}(iii), Lemma \ref{mapc} applies and there is a morphism with connected fibers $p : X \to C$ onto a smooth curve $C$. If $F \cong \P^{n-2}$ is any fiber of $\pi$, $p_{|F} : F \to C$ must be constant, hence $p$ contracts all fibers of $\pi$ to a point and \cite[Lemma 1.15(b)]{de} gives a morphism $\psi : B \to C$ such that $p=\psi \circ \pi$. Also, it follows from Lemma \ref{mapc} that, if $Z$ is a general Ulrich subvariety associated to $\E$, there is a smooth irreducible divisor $Y \in |\det \E|$ and a globally generated divisor $\Delta$ on $C$ such that if $\overline Z=p^*\Delta=\pi^*(\psi^*\Delta)$, then $Z=\overline Z \cap Y$. Furthermore, Lemma \ref{mapc}(ii) gives that
$$H^i(\O_X(Y-H-\overline Z))=0 \ \hbox{for} \ 0 \le i \le 2.$$
Since $Y \sim \xi + \pi^*M_1$ and $\overline Z \sim \pi^*M_2$, with $M_2 = \psi^*\O_C(\Delta)$, we have that $M_2^2=0$ and, setting $M=M_1-M_2$, we see that $H^i(\O_{\P(\F)}(\pi^*M))=0$ for $0 \le i \le 2$. Hence $H^i(M)=0$ for $i \ge 0$ and, in particular,
\begin{equation}
\label{laj22}
\chi(M)=0.
\end{equation}
We now compute the various identities as in Lemma \ref{ulr}(v)-(vii). We will perform the calculations in Mathematica. In the Mathematica code \cite{m1}, we have set the following notation:
$$m12=M_1^2, \ m1m2=M_1M_2, \ c12= c_1(\F)^2, \ c2=c_2(\F), \ k2=K_B^2,$$ 
$$kc1=K_Bc_1(\F), \ km1=K_BM_1, \ km2=K_BM_2, \ c1m1=c_1(\F)M_1, \ c1m2=c_1(\F)M_2.$$
Observe that, from 
$$\xi^{n-1}=\xi^{n-2}\pi^*c_1(\F)-\xi^{n-3}\pi^*c_2(\F)$$ 
we see that the following hold, for any line bundles $N, N'$ on $B$:
$$d=\xi^n=c_1(\F)^2-c_2(\F), \ \xi^{n-1}\pi^*N=c_1(\F)N, \ \xi^{n-2}\pi^*N \pi^*N'=NN'.$$
Now, since 
$$K_X = -(n-1)\xi + \pi^*K_B + \pi^*c_1(\F)$$ 
using \eqref{dete22}, we get that $c_1(\E)\xi^{n-1}=(\xi + \pi^*M_1)\xi^{n-1}=d+c_1(\F)M_1$ and $\frac{r}{2}(K_X+(n+1)\xi)\xi^{n-1}=\frac{r}{2}(3d+K_Bc_1(\F)+c_1(\F)^2)$. Thus, Lemma \ref{ulr}(v) gives (see \cite[Out(1)]{m1}) that
$$c_1(\F)M_1=\frac{1}{2}(-2d+rc_1(\F)^2+3rd+rK_Bc_1(\F)).$$
We now calculate $c_2(\E)\xi^{n-2}$. First observe that
\begin{equation}
\label{c212}
c_2(\E)=[Z]=[Y][\overline Z]=(\xi+\pi^*M_1)\pi^*M_2=\xi\pi^*M_2+\pi^*M_1\pi^*M_2
\end{equation}
so that
$$c_2(\E)\xi^{n-2}=c_1(\F)M_2+M_1M_2.$$
Next, using the standard exact sequences for $\Omega_{X/B}$ and $\Omega_X$, we find that 
\begin{equation}
\label{c2x}
c_2(X)=\pi^*c_2(B)+\pi^*c_2(\F)-(n-2)\xi\pi^*c_1(\F)+\binom{n-1}{2}\xi^2+\pi^*(K_Bc_1(\F))-(n-2)\xi\pi^*K_B.
\end{equation}
Moreover, we have
$$c_1(\E)^2 \xi^{n-2}=d+M_1^2+2c_1(\F)M_1$$
$$c_1(\E)K_X \xi^{n-2}=-d(n-2)+K_Bc_1(\F)+c_1(\F)^2+(3-n)c_1(\F)M_1+K_BM_1$$
$$K_X^2 \xi^{n-2}=(n-2)^2d+K_B^2+(6-2n)K_Bc_1(\F)+(5-2n)c_1(\F)^2$$
$$c_2(X)\xi^{n-2}=c_2(B)+c_2(\F)-(n-2)c_1(\F)^2+\binom{n-1}{2}d+(3-n)K_Bc_1(\F).$$
Using the above, \eqref{c212} and Lemma \ref{ulr}(vi), we find an identity \cite[Out(3)]{m1} from which, replacing  replacing $c_1(\F)M_1$, we get the identity $\rm Out(7)=0$ \cite[Out(7)]{m1}. 

We now compute $c_3(\E)\xi^{n-3}$. We have, using also \eqref{c212} and \eqref{c2x}:
$$c_1(\E)c_2(\E)\xi^{n-3}=c_1(\F)M_2+2M_1M_2, \ c_1(\E)^3\xi^{n-3}=d+3c_1(\F)M_1+3M_1^2$$
$$c_1(\E)^2K_X\xi^{n-3}=-(n-2)d+K_Bc_1(\F)+c_1(\F)^2-2(n-3)c_1(\F)M_1+2K_BM_1-(n-2)M_1^2$$ 
$$c_2(\E)K_X\xi^{n-3}=-(n-3)c_1(\F)M_2+K_BM_2-(n-2)M_1M_2$$ 
$$\begin{aligned}[t] 
c_1(\E)K_X^2\xi^{n-3} = & (n-2)^2d+K_B^2+(5-2n)c_1(\F)^2+2(3-n)K_Bc_1(\F)
+(n-2)(n-4)c_1(\F)M_1 \\ & -2(n-2)K_BM_1
\end{aligned}$$
$$\begin{aligned}[t]  
c_1(\E)c_2(X)\xi^{n-3} = & c_2(B)+c_2(\F)-(n-2)c_1(\F)^2+\binom{n-1}{2}d+(3-n)K_Bc_1(\F) \\
& +[2-n+\binom{n-1}{2}]c_1(\F)M_1-(n-2)K_BM_1
\end{aligned}$$
$$\begin{aligned}[t] 
K_Xc_2(X)\xi^{n-3} = & -(n-2)c_2(B)-(n-2)c_2(\F)+[(n-2)(n-3)+\binom{n-1}{2}]c_1(\F)^2 \\
& -(n-2)\binom{n-1}{2}d +[(n-2)(n-5)+\binom{n-1}{2}]K_Bc_1(\F)-(n-2)K_B^2.
\end{aligned}$$
Using the above, $c_3(\E)=0$ and Lemma \ref{ulr}(vii), we find an identity $\rm Out(11)=0$ \cite[Out(11)]{m1} from which, replacing $c_1(\F)M_1$, we get the identity $\rm Out(15)=0$ \cite[Out(15)]{m1}. From $\rm Out(7)=0$, we compute $M_1M_2$ \cite[Out(17)]{m1} and replacing it into $\rm Out(15)=0$, we get the new identity $\rm Out(18)=0$ \cite[Out(18)]{m1}. Now, \eqref{laj22} is
$$\chi(\O_B)+\frac{1}{2}M^2-\frac{1}{2}K_BM=0$$
that is
$$\frac{1}{12}(K_B^2+c_2(B))+\frac{1}{2}M_1^2-M_1M_2-\frac{1}{2}K_BM_1+\frac{1}{2}K_BM_2=0.$$
This identity is \cite[Out(19)]{m1} and, replacing $M_1M_2$ as above, we get the identity $\rm Out(23)=0$ \cite[Out(23)]{m1}, from which we compute $c_2(B)$ \cite[Out(24)]{m1}. Replacing $c_2(B)$ into the identity $\rm Out(18)=0$ \cite[Out(18)]{m1} and substituting $c_2(\F)=c_1(\F)^2-d$, we get a new identity $\rm Out(29)=0$ \cite[Out(29)]{m1}, namely
\begin{equation}
\label{ult}
-2c_1(\F)^2+2d-15dr+3dnr=0.
\end{equation} 
Next, we observe that 
\begin{equation}
\label{nuo}
H^i(\O_X(-Y+\overline Z))=0 \ \hbox{for all} \ i \ge 0.
\end{equation}
In fact $-Y+\overline Z=-\xi-\pi^*M$ and $R^q \pi_*(-\xi-\pi^*M)=0$ for every $q \ge 0$. Therefore, setting $\L'=\xi+\pi^*M$, Lemma \ref{mapc}(ii) gives that 
\begin{equation}
\label{e-l}
h^0(\E(-\L'))=1
\end{equation}
and we find an exact sequence
\begin{equation}
\label{elle}
0 \to \L' \to \E \to \mathcal Q \to 0
\end{equation}
where $\mathcal Q$ is the quotient sheaf. We will now prove that, for any fiber $F \cong \P^{n-2}$ of $\pi$, we have
\begin{equation}
\label{res}
\E_{|F} \cong \O_F^{\oplus (r-1)} \oplus \O_F(1).
\end{equation}
In fact, note that $\E_{|F}$ is globally generated and $c_2(\E_{|F})=0$ by \eqref{c212}. Therefore, a general morphism $\varphi:  \O_F^{\oplus (r-1)} \to \E_{|F}$ gives rise to an exact sequence
\begin{equation}
\label{g}
0 \to \O_F^{\oplus (r-1)} \to \E_{|F} \to \G \to 0
\end{equation}
where $\G$ is the quotient sheaf. Since $c_2(\E_{|F})=0$, then $D_{r-2}(\varphi) = \emptyset$ by \cite[Statement (folklore)(i), \S 4.1]{ba} and \cite[Lemma 4.1]{blv}, hence $\G$ is a line bundle on $F$. Also, $c_1(\E_{|F})=\xi_{|F}$ by \eqref{dete22}, gives that $\G \cong \O_F(1)$ and then the sequence \eqref{g} splits because $H^1(\O_F^{\oplus (r-1)}(-1))=0$. This proves \eqref{res}. 

Next, we claim that the quotient $\mathcal Q$ in \eqref{elle} is a vector bundle. To see the latter, since the map $\L' \to \E$ in \eqref{elle} is given by a nonzero section $\sigma \in H^0(\E(-\L'))$, we need to show that $\sigma$ never vanishes. Suppose that $\sigma$ vanishes at a point $x \in X$ and let $F$ be a fiber passing through $x$. Then also $\sigma_{|F}$ vanishes at $x$. On the other hand, $\sigma_{|F} \in H^0(\E(-\L')_{|F})=H^0(\O_F(-1)^{\oplus (r-1)} \oplus \O_F)$ by \eqref{res}, that is $\sigma_{|F}=(0, \ldots, 0, \lambda)$ for some $\lambda \in \C$. Since $\sigma_{|F}$ vanishes at $x$ we get that $\lambda=0$, hence $\sigma_{|F}=0$. Now, consider the diagram, obtained from the Eagon-Northcott resolution of $\I_{Z/X}$, where $\psi$ is an isomorphism by \eqref{nuo} and $h^0(\E(-\L'))=1$ by \eqref{e-l}:
$$\xymatrix{\C \sigma \cong H^0(\E(-\L')) \ar[d] \ar[r]^{\hskip -.6cm \psi}_{\hskip -.6cm \cong} & H^0(\I_{Z/X}(\overline Z)) \cong \C \psi(\sigma) \ar[d]^{r_F} \\ H^0(\E(-\L')_{|F})\ar[r] & H^0(\I_{Z/X}(\overline Z) \otimes \O_F).}$$
Since $\sigma_{|F}=0$, we get that $r_F(\psi(\sigma))=0$. On the other hand, the exact sequence
$$0 \to \O_X(-Y+\overline Z) \to \I_{Z/X}(\overline Z) \to \O_Y \to 0$$
gives another diagram, where $r_Y$ is an isomorphism by \eqref{nuo},
$$\xymatrix{\C \psi(\sigma) \cong H^0(\I_{Z/X}(\overline Z)) \ar[d]^{r_F} \ar[r]_{\hskip -.2cm \cong}^{r_Y} & H^0(\O_Y) \cong \C r_Y(\psi(\sigma)) \ar[d]^{r'_F} \\ H^0(\I_{Z/X}(\overline Z) \otimes \O_F) \ar[r]^{\hskip .7cm f} & H^0(\O_{Y \cap F})}$$
where $r'_F(r_Y(\psi(\sigma))=f(r_F(\psi(\sigma))=0$ and therefore the map $r'_F$ is zero. But this is a contradiction since $Y \cap F$, being a hyperplane in $F$, is nonempty. We have thus proved that $\mathcal Q$ is a vector bundle. 

Since $\det \mathcal Q = \det \E-\L' = \pi^*M_2$ and $\mathcal Q$ is globally generated since $\E$ is, it follows from \cite[Lemma 5.1]{lo} that $\mathcal Q \cong \pi^* \H$, where $\H$ is a rank $r-1$ vector bundle on $B$. Also, $\H$ is globally generated by \cite[Exc.~5.1.29(b)]{liu}, $c_1(\H)=M_2$ and $c_1(\H)^2=M_2^2=0$, hence   $c_2(\H)=0$ by \cite[Lemma 4.2(i)]{blv}. Now, we obtain from \eqref{elle} the following exact sequence
$$0 \to \L'((1-n)\xi) \to \E((1-n)\xi) \to (\pi^*\H)((1-n)\xi) \to 0$$
and we note that, for each $i \ge n-2$, we have $H^{i+1}(\L'((1-n)\xi))=0$ because $\L'((1-n)\xi)=(2-n)\xi+\pi^*M$ and $R^q \pi_*((2-n)\xi)=0$ for every $q \ge 0$ and $H^i(\E((1-n)\xi))=0$ because $\E$ is Ulrich. It follows that $H^i((\pi^*\H)((1-n)\xi))=0$ for $i \ge n-2$. Since $R^q \pi_*((1-n)\xi)=0$ for every $q \ge 0, q \ne n-2$ and $R^{n-2} \pi_*((1-n)\xi) \cong \O_B(-\det \F)$, we deduce that $H^{i-n+2}(\H(-\det \F))=0$ for $i \ge n-2$. Hence $\chi(\H(-\det \F))=0$ that is, by Riemann-Roch,
\begin{equation}
\label{chim3}
\frac{r-1}{12}(K_B^2+c_2(B))+\frac{r-1}{2}c_1(\F)^2-c_1(\F)M_2-\frac{1}{2}K_BM_2+\frac{r-1}{2}K_Bc_1(\F)=0.
\end{equation}

Finally, we use \eqref{chim3}, that is the identity $\rm Out(33)=0$ \cite[Out(33)]{m1}. From the latter, replacing $c_2(B)$ and again $c_2(\F)=c_1(\F)^2-d$, we get the identity $\rm Out(37)=0$ \cite[Out(37)]{m1}, that is
\begin{equation}
\label{ult2}
(c_1(\F)^2+6d-dn)r=0.
\end{equation} 
In particular, the above implies that $n \ge 7$, for if $n \le 6$, then \eqref{ult2} is $(c_1(\F)^2+(6-n)d)r=0$, a contradiction since $\F$ is very ample and therefore $c_1(\F)^2>0$. 
Combining it with \eqref{ult} we find that $d(14-2n-15r+3nr)=0$, that is $n(3r-2)=15r-14$, a contradiction since $n \ge 7$. 
\end{proof}

\section{Examples}

We give here some examples of the cases appearing in Theorem \ref{c1^4=0} and Corollary \ref{c22=0}.

There are many examples Ulrich bundles $\E$ with $c_1(\E)^4=0$, both linear Ulrich triples and not, see \cite{lo}, \cite[Thm.~1]{lms1}. We give here one that is also a Del Pezzo fibration.

\begin{example} 
\label{secondo}

Let $B$ be a smooth irreducible curve and let $L$ be a very ample line bundle on $B$. Let $X = B \times \P^2 \times \P^2$ with projections $\pi_1 : X \to B, \pi_2 : X \to \P^2 \times \P^2$ and let $\O_X(1) = L \boxtimes (\O_{\P^2}(1) \boxtimes \O_{\P^2}(1))$. We have that $K_X+3H  = \pi_1^*(K_B+3L)$, hence $\pi_1=\varphi_{K_X+3H}$. Since $K_B+3L$ is very ample, we see that $\varphi_{K_X+3H}$ gives a Del Pezzo fibration on $X$ over $B$ with all fibers $\P^2 \times \P^2$. Let $\H$ be any Ulrich bundle for $(B, L)$ and let $\E= \H(4L) \boxtimes q^*(\O_{\P^2}(2))$, where $q: \P^2 \times \P^2 \to \P^2$ is a projection. Then $\E$ is an Ulrich bundle on $X$ by \cite[(3.5)]{be} and \cite[Cor.~4.9]{ls}. Moreover, $c_1(\E)^4=0, c_1(\E)^3 \ne 0$. However note that $(X,\O_X(1),\E)$ is also a linear Ulrich triple over $B \times \P^2$, as $\E \cong (\pi_1 \times q)^*(\G(\det \F))$ where $\F=(L \boxtimes \O_{\P^2}(1))^{\oplus 3}$ and $\G=\H(L)\boxtimes \O_{\P^2}(-1)$.
\end{example}

We also have the following.

\begin{example}
\label{esbs}

Let $T$ be a smooth irreducible $3$-fold, let $L$ be a very ample line bundle on $T$ such that $T \subset \P H^0(L)$ does not contain lines and let $\G$ be a rank $r \ge 2$ Ulrich bundle for $(T, L)$. Let $X=(\P^2 \times T) \cap H$ be a hyperplane section of the Segre embedding of $\P^2 \times T \subset \P^N$, that is embedded with $\O_{\P^2}(1) \boxtimes L$. Let $\O_X(1)=(\O_{\P^2}(1) \boxtimes L)_{|X}$ and $p:= {\pi_2}_{|X} : X \to T$ be the restriction of the second projection. 

We claim that:
\begin{itemize}
\item[(i)] $p : X \to T$ exhibits $(X, \O_X(1))$ as a scroll over $T$ (that is such that $K_X+2H_1=p^*\L$, with $H_1 \in |\O_X(1)|$ and $\L$ ample on $T$ - in this case $\L=K_T+3L$) with general fiber a linear $\P^1$, but having $L^3$ fibers equal to a linear $\P^2$.
\item[(ii)] $X$ is not a linear $\P^{4-b}$-bundle over a variety of dimension $b$ with $1 \le b \le 3$.
\item[(iii)] $\E := p^*(\G(2L))$ is an Ulrich bundle on $X$ such that $c_1(\E)^3 \ne 0, c_1(\E)^4 = 0, c_2(\E) \ne 0, c_2(\E)^2=0$. 
\end{itemize}
\begin{proof}
Note that that $K_T+3L$ is ample by \cite[Prop.~7.2.2 and 7.2.3]{bs}, since $T$ does not contain lines. Then (i) follows from \cite[Ex.~14.1.5]{bs}. To see (ii), let $x=(z,y) \in X$ and set $f_x=p^{-1}(p(x))=p^{-1}(y)$ for the fiber of $p$ passing through $x$. Note that $f_x$ is either a line of type $(\P^2 \times \{y\})\cap H$ or a plane of type $\P^2 \times \{y\}$. Assume that $X$ is a linear $\P^{4-b}$-bundle $\pi : X \to B$ over a smooth variety $B$ of dimension $b$ with $1 \le b \le 3$. Let $x \in X$ be general and let $F_x=\P^{4-b}$ be the fiber of $\pi$ passing through $x$. Now, any line $R$ contained in $F_x$ and passing through $x$ is such that $R \subset \P^2 \times T \subset \P^N$, hence, as is well-known and using the fact that $T$ does not contain lines, $R=R' \times \{y\}$ with $y=p(x) \in T$ and $R' \subset \P^2$ is a line. Hence $R=R' \times \{y\} \subset (\P^2 \times \{y\})\cap H$. Since this is true for every $R$ such that $x \in R \subseteq F_x$, we deduce that $F_x \subseteq f_x$. But $f_x$ is a line, hence so is $F_x$ and therefore $b=3$. Let $M=\P^2 \times \{y_0\}$ be a linear $\P^2$ contained in $X$ and let $x_0 \in M$ be a general point. Since $F_{x_0} \subsetneq M$ and $\pi$ contracts $F_{x_0}$ to a point, while $\pi$ does not contract $M$ to a point, we have that $\pi(M)$ is a curve and we get a surjective morphism $\pi_{|M} : M \to \pi(M)$. But there is no surjective morphism from $\P^2$ to a curve. This contradiction proves (ii).  To see (iii), setting $\E' = \pi_2^*(\G(2L))$, we get that $\E'$ is an Ulrich  bundle on $\P^2 \times T$ by \cite[Lemma 4.1(ii)]{lo}. Hence $\E = p^*(\G(2L))= \E'_{|X}$ is an Ulrich  bundle on $X$ such that 
$$c_1(\E)^3 = p^*((c_1(\G)+2rL)^3)=p^*(c_1(\G)^3+6rc_1(\G)^2L+12r^2c_1(\G)L^2+8r^3L^3)\ne 0$$ 
since $L$ is very ample, and $c_1(\E)^4 = 0$. Clearly $c_2(\E)^2=p^*(c_2(\G(2L))^2)=0$. On the other hand, $c_2(\E) \ne 0$ by (ii) and \cite[Thm.~2]{blv}. This proves (iii).
\end{proof}

Note that, in the above example, we might have either that $c_3(\E)=0$ or that $c_3(\E) \ne 0$.

To see this, let $T \subset \P^9$ be the Veronese $3$-fold, that is $(T,L)=(\P^3,\O_{\P^3}(2))$. Let $\N$ be the null-correlation bundle on $\P^3$, that is (see for example \cite[\S 4.2]{oss}) the rank $2$ bundle sitting in the exact sequence
$$0 \to \N \to T_{\P^3}(-1) \to \O_{\P^3}(1) \to 0.$$
It is known (and also easy to check) that $\G:=\N(2)$ is a rank $2$ Ulrich bundle for $(T,L)$ (see for example \cite[Ex.~6.5]{be}). Now we have 
$$\E = p^*(\G(2L))=p^*(\N(6))$$ 
hence
$$c_1(\E)=12p^*H_{\P^3}, \ c_2(\E)=37p^*H_{\P^3}^2, c_3(\E)=0.$$
On the other hand, also $\E^{\oplus 2}$ is Ulrich with 
$$c_2(\E^{\oplus 2})=c_1(\E)^2+2c_2(\E)=218p^*H_{\P^3}^2 \ne 0, \ c_2(\E^{\oplus 2})^2=0$$
and 
$$c_3(\E^{\oplus 2})=2c_3(\E)+2c_1(\E)c_2(\E)=888p^*H_{\P^3}^3=888F \ne 0$$ 
where $F$ is a general fiber of $p$.
\end{example}

\section{Ulrich bundles with $c_3(\E)=0$}

In this section we study Ulrich bundles with $c_3(\E)=0$. 

First, we take care of the case $c_1(\E)^3=0$, that is a simple consequence of the results in \cite{ls}.

 
\begin{lemma}
\label{c13}
Let $X \subset \P^N$ be a smooth irreducible variety of dimension $n \ge 3$ and let $\E$ be a rank $r$ vector bundle on $X$. Then $\E$ is Ulrich with $c_1(\E)^3=0$ if and only if $(X,\O_X(1),\E)$ is one of the following:
\begin{itemize}
\item [(i)] $(\P^n,\O_{\P^n}(1),\O_{\P^n}^{\oplus r})$.
\item [(ii)] A linear Ulrich triple over a curve or a surface.
\end{itemize} 
In particular we have that $c_3(\E)=c_2(\E)^2=0$.
\end{lemma}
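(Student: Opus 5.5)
The plan is to reduce to the known classification of Ulrich bundles with vanishing powers of $c_1$ in \cite{ls}, in the same way the case $c_1(\E)^4=0$, $n \ge 6$ was treated in Theorem \ref{c1^4=0}.

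\emph{The ``if'' direction.} Case (i) is Ulrich by \cite[Prop.~2.1]{es} and has $c_1(\E)=0$. In case (ii), $\E$ is Ulrich by \cite[Lemma 4.1]{lo}. Moreover $\E=\pi^*(\G(\det \F))$ with $\pi : X \to B$ and $\dim B \le 2$, so every Chern class of $\E$ of degree $\ge 3$, and every product of Chern classes of total degree $\ge 3$, is pulled back from a class of degree $\ge 3$ on $B$, which vanishes. This gives $c_1(\E)^3=0$, and at the same time it yields the final assertion $c_3(\E)=c_2(\E)^2=0$ in case (ii). In case (i) all Chern classes vanish.

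\emph{The ``only if'' direction.} Assume $\E$ is Ulrich with $c_1(\E)^3=0$.
\begin{itemize}
\item If $(X,\O_X(1))=(\P^n,\O_{\P^n}(1))$, we are in (i) by Lemma \ref{ulr}(iii).
\item Otherwise $\det \E$ is globally generated and nontrivial by Lemma \ref{ulr}(ii), and $\nu(\det \E) \le 2$.
\item For $n\ge 4$ I would invoke \cite[Cor.~4]{ls} directly, as was already done for $n=5$ in the proof of Theorem \ref{c1^4=0}. That corollary covers $c_1(\E)^k=0$ with $k \le \lfloor n/2\rfloor+1$, and $k=3$ satisfies this for $n\ge 4$. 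It gives a linear Ulrich triple over a base of dimension $\nu(\det\E)\in\{1,2\}$.
\end{itemize}

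\emph{The main obstacle: $n=3$.} Here $3 \le \lfloor 3/2\rfloor+1=2$ fails, so the bound in \cite[Cor.~4]{ls} does not apply. Instead I would use that $c_1(\E)^3=0$ means $\E$ is not big (\cite[Rmk.~2.2]{lm}), so $\phi(\E)=3-\nu(\det\E)\ge 1$ by \eqref{muk2}. Then I would split into cases.
\begin{itemize}
\item If $\phi(\E)\ge 2$, the fibers of $\Phi_{\E}$ are linear spaces of dimension $\ge 2$ by \cite[Lemma 2.10]{lms1}. This leads to a linear $\P^2$-bundle over a curve with $\E$ pulled back, via \cite[Lemma 2.12]{lms1} and \cite[Lemmas 5.1 and 4.1]{lo}.
\item If $\nu=2$, the fibers are lines. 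I would either quote the threefold non-big classification in \cite{lms1,ls}, which I expect to be stated there, or argue directly with the fibers of $\Phi$. In the latter case the key point is equidimensionality of the fibers: if all fibers are lines, \cite[Lemma 2.12]{lms1} again produces a linear $\P^1$-bundle over a surface.
\end{itemize}
I expect that \cite[Cor.~4]{ls} (or its companion \cite[Cor.~4.9]{ls}) already covers $n=3$ with $c_1^3=0$, and I would check this first. Only if it does not would I carry out the fiber analysis above.

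Finally, $c_3(\E)=c_2(\E)^2=0$ follows from the explicit form of the two cases, as noted in the ``if'' direction.
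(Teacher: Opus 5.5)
Your ``if'' direction, the final assertion and the case $n \ge 4$ match the paper. The paper phrases the latter as rerunning the proof of \cite[Cor.~4]{ls} using $3 \le \lfloor n/2 \rfloor + 1$; it is that proof which yields a base of dimension $\nu(\det \E)$.

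The gap is at $n=3$. You correctly note that \cite[Cor.~4]{ls} does not reach $c_1(\E)^3=0$ there, but then you leave the case open. You either defer to a classification you expect to find in \cite{lms1,ls}, or to a fiber analysis whose key step, equidimensionality of $\Phi_{\E}$ when $\nu(\det \E)=2$, you name but do not prove. For $\nu(\det \E)=1$ equidimensionality is automatic, since a $3$-dimensional linear fiber would force $X=\P^3$. For $\nu(\det\E)=2$ it is not a formality: a priori a special fiber of $\Phi_{\E}$ could be a plane while the general one is a line, and for $n=4$ this really happens (Example \ref{esbs}). The paper settles $n=3$ along your first option, but the reference is \cite[Rmk.~2.2 and Thm.~2]{lm}, the classification of non-big Ulrich bundles on threefolds. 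You do not check that its only cases with $c_1(\E)^3=0$ are (i) and (ii), so this option is only sketched as well.

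To complete your direct route you need an argument such as the following.
\begin{itemize}
\item Let $\psi : X \to S$ be the Stein factorization of $\Phi_{\E}$; its fibers are exactly the linear fibers of $\Phi_{\E}$. Suppose some fiber $F_0$ is a plane, with $\O_X(F_0)_{|F_0} \cong \O_{\P^2}(a)$.
\item For general $H$, the surface $X_2 = X \cap H$ is smooth and $\psi_{|X_2} : X_2 \to S$ is birational, since a general fiber of $\psi$ is a line meeting $H$ in one point. This map contracts the line $F_0 \cap H$ to a point. Hence the self-intersection of $F_0\cap H$ on $X_2$, which equals $F_0^2 \cdot H = a$, is negative by Mumford's negativity lemma.
\item On the other hand, take the fibers of $\psi$ over a general curve in $S$ through $\psi(F_0)$. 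Their flat limit is a $1$-cycle of $H$-degree $1$ supported on $F_0$, that is, a line $\ell \subset F_0$. So $a = F_0 \cdot \ell = F_0 \cdot f = 0$, where $f$ is a general fiber, disjoint from $F_0$.
\end{itemize}
This contradiction gives equidimensionality. Then \cite[Lemma 2.12]{lms1} and \cite[Lemmas 5.1 and 4.1]{lo} conclude as you indicate.
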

\begin{proof}
If $(X,\O_X(1),\E)$ is as in (i) or (ii), then $\E$ is Ulrich in the first case by \cite[Prop.~2.1]{es} (or \cite[Thm.~2.3]{be}), in the second case by \cite[Lemma 4.1]{lo}. Moreover $c_1(\E)^3=c_3(\E)=c_2(\E)^2=0$ since, by definition of linear Ulrich triple, $\E$ is pull-back of a bundle on a variety of dimension at most $2$.

Now assume that $\E$ is Ulrich and $c_1(\E)^3=0$. If $(X,\O_X(1))=(\P^n,\O_{\P^n}(1))$, we are in case (i) by Lemma \ref{ulr}(iii). Thus $(X,\O_X(1)) \ne (\P^n,\O_{\P^n}(1))$ and $\rho(X) \ge 2$, for otherwise $c_1(\E)^3=0$ implies that $c_1(\E)=0$ and then $(X,\O_X(1))=(\P^n,\O_{\P^n}(1))$ by \cite[Lemma 2.1]{lo}. 

If $n \ge 4$, we can repeat the proof of \cite[Cor.~4]{ls} using now the fact that $c_1(\E)^3=0$ and that $3 \le \lfloor \frac{n}{2} \rfloor +1$. It follows from that proof that $(X,\O_X(1),\E)$ is as in (ii). If $n=3$, \cite[Rmk.~2.2 and Thm.~2]{lm} give that $(X, \O_X(1), \E)$ is as in (ii). 
\end{proof}

We now prove Theorem \ref{trecasi}.

\renewcommand{\proofname}{Proof of Theorem \ref{trecasi}}
\begin{proof}
Observe that $\Pic(X) \not\cong \Z \O_X(1)$, for otherwise \cite[Lemma 7.6(ii) and Cor.~1]{blv} imply that $c_3(\E) \ne 0$. In particular $d=\deg X \ge 3$ and, if $(X, \O_X(1))$ is a Del Pezzo $n$-fold, then, using the classification of Del Pezzo manifolds (see for example \cite[pages 860-861]{lp}, \cite[Table, page 710]{f1}) we see that only the cases $d \in \{6, 7\}$ are possible. But, in these cases, $(X, \O_X(1))$ is anyway as in (b). 

In order to prove the first assertion of the theorem, namely that $(X, \O_X(1))$ is one of (a)-(c), it is therefore enough, by Proposition \ref{abc}, to show that $X_3$ is covered by lines. Indeed, set $\E' = \E_{|X_3}$, so that also $\E'$ is Ulrich and then globally generated by Lemma \ref{ulr}(iv)-(i). Since $c_3(\E')=c_3(\E)_{|X_3}=0$, it follows from \cite[Prop.~4.3(i)]{blv} that $\B_+(\E')=X_3$. On the other hand, we know by \cite[Thm.~2]{bu} that 
\begin{equation}
\label{cop}
\B_+(\E')=\bigcup_L L
\end{equation}
where $L$ ranges over all lines $L \subset X_3$ such that ${\E'}_{|L}=\E_{|L}$ is not ample and this proves that $X_3$ is covered by lines, hence the first part of the theorem.

Suppose now that $c_2(\E)^2=c_3(\E)=0$ and $n \ge 4$, so that we have a morphism $\pi : X \to B$ as in (a), (b) or (c). 

First, consider the case $c_2(\E)=0$. It follows from \cite[Thm.~2]{blv} that $(X, \O_X(1), \E)$ is a linear Ulrich triple $\pi' : X \to B'$ over a smooth curve $B'$. Now, the fibers of $\pi'$ are $\P^{n-1}$'s, hence they are contracted by $\pi$, that is they are contained in a fiber of $\pi$. This implies that we are in case (a) and, similarly, the fibers of $\pi'$ are contracted by $\pi'$, hence $\pi=\pi'$ and $(X,\O_X(1),\E)$ is a linear Ulrich triple over $B$. 

Now assume that $c_2(\E) \ne 0$. If we are in case (a), we conclude by Lemma \ref{pf}, if we are in case (b), we conclude by Lemma \ref{sup}, while if we are in case (c), then $n=4$ and $B \cong \P^1$ by Lemma \ref{qf}.
\end{proof}
\renewcommand{\proofname}{Proof}

\begin{remark}
In rank $2$, we have that $c_3(\E)=0$ and if $c_2(\E)^2=0$, it might well be that $(X, \O_X(1))$ is not as in (a)-(c) of Theorem \ref{trecasi}. In fact, if we pick $X$ as in Example \ref{esbs} with $T \subset \P^9$ the Veronese $3$-fold, it is easy to show that there are no line bundles $\L$ on $X$ with $\L^2=0$, hence $X$ does not have a morphism onto a curve and $(X, \O_X(1))$ is not as in (b) by Example \ref{esbs}(ii). 
\end{remark}

\section{On connectedness of Ulrich subvarieties}

In this section we will prove Corollary \ref{conn}, Corollary \ref{conn2} and we will give some related remarks.

\renewcommand{\proofname}{Proof of Corollary \ref{conn}}
\begin{proof}
By hypothesis we have that $c_2(\E) \ne 0$, hence $d=\deg X \ge 2$ by Lemma \ref{ulr}(iii). Since Ulrich subvarieties associated to $\E$ are normal (see Definition \ref{usv}), they are irreducible if and only if they are connected. Now, if there is a disconnected Ulrich subvariety, then $c_3(\E)=0$ by \cite[Prop.~6.1 and Cor.~1]{blv} and we get a contradiction by Theorem \ref{trecasi}.
\end{proof}
\renewcommand{\proofname}{Proof}

In cases (a)-(c) of Theorem \ref{conn}, the situation is more variegated, as Ulrich subvarieties may be both all connected and all disconnected.

\begin{example}
\label{a}
In case (a), let $(X, \O_X(1))$ be $(\P(\F), \O_{\P(\F)}(1))$, where $\F$ is a very ample rank $n \ge 3$ vector bundle on a smooth curve $C$ of genus $g$. In \cite[Ex.~10.1]{blv} there is an example where all Ulrich subvarieties are disconnected. On the other hand, let $M$ be a general divisor of degree $g-1$ on $C$ and let $\E=(\O_{\P(\F)}(1) \otimes \pi^\ast M)^{\oplus r}$, where $\pi$ is the bundle map. Then $\E$ is Ulrich and very ample, so that all Ulrich subvarieties associated to $\E$ are connected by \cite[Thm.~II(b)]{fl}.
\end{example}

\begin{example}
\label{b}
In case (b), let $S \subset \P^N$ be any smooth surface not containing any line and let $(X, \O_X(1)) = (\P^{n-2} \times S, \O_{\P^{n-2}}(1) \boxtimes \O_S(1))$. Let $\G$ be a rank $r \ge 3$ Ulrich bundle on $S$ and let $\E = \O_{\P^{n-2}}(2) \boxtimes \G$. Then $\E$ is a very ample Ulrich bundle on $(X, \O_X(1))$ by \cite[Rmk.~4.3(i)]{ls}, hence all its associated Ulrich subvarieties are connected by \cite[Thm.~II(b)]{fl}. On the other hand, in case (b), one can take a linear Ulrich triple over the surface and then $c_3(\E)=0$ and all Ulrich subvarieties are disconnected by \cite[Prop.~6.1 and Cor.~1]{blv}.
\end{example}

\begin{example}
\label{c}
In case (c), one can take $Q=Q_{n-1}, (X, \O_X(1)) = (\P^1 \times Q, \O_{\P^1}(1) \boxtimes \O_Q(1))$ and $\E = \pi_2^*(\mathcal S(1))^{\oplus r}, r \ge 3$, where $\pi_2: X \to Q$ is the second projection and $\mathcal S$ is a spinor bundle on $Q$. Then $\E$ is Ulrich by \cite[(3.5)]{be} and, if $n \ge 4$, then $c_3(\E) \ne 0$, hence all Ulrich subvarieties are connected by \cite[Prop.~6.1 and Cor.~1]{blv}. If $n=3$, then $c_3(\E)=0$, hence all Ulrich subvarieties are disconnected by \cite[Prop.~6.1 and Cor.~1]{blv}.
\end{example}

When $c_2(\E)^2 = 0$, we can prove that, in many cases, Ulrich subvarieties are disconnected.

\begin{lemma} 
\label{disc}
Let $n \ge 2$ and let $\E$ be a rank $r$ Ulrich bundle on $X \subset \P^N$ such that $c_2(\E) \ne 0$ and either one of the following is satisfied:
\begin{itemize}
\item [(i)] $c_2(\E)^2 = 0$ and one of $\{c_1(\E)^4, c_1(\E)^2c_2(\E), c_4(\E)\}$ is not zero, or 
\item [(ii)] $c_1(\E)^3=0$.
\end{itemize}
Then $c_3(\E)=0$ and all Ulrich subvarieties associated to $\E$ have at least $r-1$ connected components. 
\end{lemma}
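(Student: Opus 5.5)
The plan is to first prove $c_3(\E)=0$, and then to turn this vanishing into a combinatorial statement about the kernels of $\varphi$ along $Z$.

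\emph{Step 1: $c_3(\E)=0$.} In case (i), Lemma \ref{ulr}(i) shows that $\E$ is globally generated. If $c_3(\E)\ne 0$, Lemma \ref{c3no}(ii) would give $c_1(\E)^4=c_1(\E)^2c_2(\E)=c_4(\E)=0$, contradicting the hypothesis; hence $c_3(\E)=0$. In case (ii), Lemma \ref{c13} gives directly $c_3(\E)=c_2(\E)^2=0$. Moreover $(X,\O_X(1),\E)$ is then a linear Ulrich triple over a curve or a surface, since $c_2(\E)\ne 0$ excludes $\P^n$. In particular, in both cases \cite[Prop.~6.1 and Cor.~1]{blv} already gives that every Ulrich subvariety is disconnected. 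What remains is the sharper bound of at least $r-1$ components.

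\emph{Step 2: a kernel map on $Z$.} Let $V\subset H^0(\E)$ with $\dim V=r-1$, $\varphi: V\otimes\O_X\to\E$ and $Z=D_{r-2}(\varphi)$; if $Z=\emptyset$ the count is vacuous, so assume $Z\ne\emptyset$. Since $c_3(\E)=0$, \cite[Lemma 4.1]{blv} gives $D_{r-3}(\varphi)=\emptyset$. So at each $z\in Z$ the kernel of $\varphi(z)$ is exactly one-dimensional. This defines a line subbundle $K\subset V\otimes\O_Z$ and a morphism
$$\kappa: Z\to \P(V)\cong\P^{r-2},\qquad z\mapsto \ker\varphi(z).$$
The image of $\kappa$ is nondegenerate. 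Indeed, if all kernels lay in a hyperplane $V'\subset V$, then $\varphi_{|V'\otimes\O_X}$ would drop rank at the points of $Z$, so $\emptyset\ne Z\subseteq D_{r-3}(\varphi_{|V'})$. But $D_{r-3}(\varphi_{|V'})=\emptyset$ for general $V$, again since $c_3(\E)=0$.

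\emph{Step 3: $\kappa$ is constant on each connected component.} Once this holds, $\kappa(Z)$ is a finite set spanning $\P^{r-2}$, hence has at least $r-1$ points. Since $\kappa$ is constant on connected components, $Z$ has at least $r-1$ connected components. To prove constancy I would compute $c_1(\kappa^*\O(1))=c_1(K^*)$. On $Z$ there is the exact sequence
$$0\to K\to V\otimes\O_Z\to \E_{|Z}\to \mathcal C\to 0,$$
with $\mathcal C$ a rank $2$ bundle. The intended route is to express powers of $c_1(K^*)$, pushed forward to $X$, via the excess/Porteous formula on the resolution of $D_{r-2}$, in terms of $c_2(\E)^2-c_1(\E)c_3(\E)$ and $c_3(\E)$. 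Then the hypotheses $c_2(\E)^2=c_3(\E)=0$ should force $c_1(K^*)$ to be numerically trivial on curves of $Z$, so that $\kappa$ contracts every curve in $Z$.

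In case (ii) there is a simpler alternative. Since $\E=\pi^*(\G(\det\F))$ is a pull-back from a base $B$ of dimension at most $2$, we have $Z=\pi^{-1}(Z_B)$ with $Z_B$ a degeneracy locus on $B$ of dimension at most $0$. On each fiber over $Z_B$ the kernel is constant, and the count can be done on $B$, where the same nondegeneracy argument applies.

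The main obstacle is Step 3 in case (i): computing $c_1(K^*)$ on $Z$ and proving that it vanishes on curves. If the Chern class route is too messy, I would first try instead to restrict to general linear sections of $Z$ and reduce to the case of a curve. On such a curve $\deg K^*$ is an intersection number on $X$ involving $c_2(\E)^2$ and $c_1(\E)c_3(\E)$, and both of these vanish under the hypotheses (using Lemma \ref{c3no}(ii) for the second).
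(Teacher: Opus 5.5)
Your Step 1 is correct and is how the paper gets $c_3(\E)=0$. In case (ii) the paper cites \cite[Lemma 4.2(i)]{blv} rather than Lemma \ref{c13}, and for $n=2$ the vanishing is trivial. The paper then finishes in one line: for $r=2$ there is nothing to prove, and for $r\ge 3$, \cite[Cor.~1]{blv} already gives at least $r-1$ connected components once $c_3(\E)=0$. So that reference gives the full count, not only disconnectedness.

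Your attempt to reprove the count has a genuine gap in Step 2. You argue that the kernel lines span $V$ because $D_{r-3}(\varphi_{|V'})=\emptyset$ ``for general $V$''. But $V'$ is not a general $(r-2)$-dimensional subspace; it is dictated by $Z$. In fact, for $r\ge 3$, every hyperplane $V'\supseteq\ker\varphi(z)$ with $z\in Z$ has $z\in D_{r-3}(\varphi_{|V'})$. What you actually need is that no hyperplane $V'$ satisfies $D_{r-3}(\varphi_{|V'})=Z$. This does not follow from global generation and $c_3(\E)=0$, which is all that Steps 2--3 use. For instance, take $\E=\pi_1^*(\O_{\P^2}^{\oplus(r-2)}\oplus\O_{\P^2}(1)^{\oplus 2})$ on $\P^2\times\P^{n-2}$ with $r\ge 3$. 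It is globally generated with $c_2(\E)\ne 0$ and $c_1(\E)^3=c_3(\E)=0$. A general $V$ meets $H^0(\pi_1^*\O_{\P^2}(1)^{\oplus 2})$ in a line $\langle s_0\rangle$, and $Z$ is the single fibre where $s_0$ vanishes. Every kernel equals $\langle s_0\rangle$, and $Z$ is connected. So your argument would prove a false statement; the Ulrich hypothesis has to enter somewhere.

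It enters through two vanishings. Dualizing $0\to V\otimes\O_X\to\E\to\I_{Z/X}(Y)\to 0$ gives
$0\to\O_X(-Y)\to\E^*\to V^*\otimes\O_X\to\mathcal{E}xt^1(\I_{Z/X}(Y),\O_X)\to 0$.
The last sheaf is a line bundle on $Z$ whose fibre at $z$ is $(\ker\varphi(z))^*$.
\begin{itemize}
\item A $\lambda\in V^*$ killing all kernel lines maps to $0$ in this line bundle, since $Z$ is reduced.
\item It therefore lifts to $H^0(\E^*)$, because $H^1(\O_X(-Y))=0$. This holds by Kawamata--Viehweg, since $\nu(\det\E)\ge 2$: otherwise $c_1(\E)^2=0$, hence $c_2(\E)=0$ by \cite[Lemma 4.2(i)]{blv}.
\item But $H^0(\E^*)=0$: otherwise $\O_X$ would split off the globally generated $\E$ and be Ulrich, forcing $(X,\O_X(1))=(\P^n,\O_{\P^n}(1))$ and $c_2(\E)=0$. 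Hence $\lambda=0$.
\end{itemize}

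Step 3 is also only sketched, and the intersection number you anticipate is not the relevant one, but it is easy to complete. On the resolution $\{([v],x):\varphi(x)(v)=0\}\subset\P(V)\times X$, whose class is $c_r(\E\otimes\O_{\P(V)}(1))$, the degree of $K^*$ on a curve section of $Z$ equals $c_3(\E)H^{n-3}=0$. Since $K^*$ is globally generated, the kernel map is then constant on each connected component. Alternatively, for general $V$, a general hyperplane $W\subset V$ has $D_{r-3}(\varphi_{|W})=\emptyset$, so the kernel map lands in the affine space $\P(V)\setminus\P(W)$ and is constant on each connected component.
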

\begin{proof}
Since $c_2(\E) \ne 0$, then $r \ge 2$ and if $r=2$ the conclusion is obvious. Now suppose that $r \ge 3$. If (i) holds, then $c_3(\E)=0$ by Lemmas \ref{ulr}(i) and \ref{c3no}(ii), while if (ii) holds, then $c_3(\E)=0$ by \cite[Lemma 4.2(i)]{blv}. Hence we can apply \cite[Cor.~1]{blv} to conclude. 
\end{proof}

We can now prove Theorem \ref{c2^2=0}.

\renewcommand{\proofname}{Proof of Theorem \ref{c2^2=0}}
\begin{proof}
(i) follows from \cite[Thm.~2]{blv}, (ii) from Theorem \ref{c1^4=0}, (iii) from Corollary \ref{c22=0} and (iv) from Theorem \ref{trecasi}.
\end{proof}
\renewcommand{\proofname}{Proof}

Finally, we prove Corollary \ref{conn2}.

\renewcommand{\proofname}{Proof of Corollary \ref{conn2}}
\begin{proof}
It follows from Corollary \ref{conn} that if $Z$ is disconnected and $r \ge 3$, then $(X, \O_X(1))$ is as in (a)-(c). 

Now assume that $(X, \O_X(1))$ is as in (a)-(c) of Corollary \ref{conn} and, if $n=4$, in case (b), $\det \E$ is ample, in case (c), either $B \not\cong \P^1$ or $B \cong \P^1$ and $\det \E$ is ample. In particular, case (c) is now excluded by Lemma \ref{qf}, while, in case (b), only (i)-(ii) of Lemma \ref{sup} remain.

Observe that if $(X,\O_X(1),\E)$ is a linear Ulrich triple over $B$, then $c_3(\E)=0$ and we are in case (b), for in case (a) we would have the contradiction $c_2(\E)=0$. Moreover, \cite[Lemma 4.6]{blv} implies that $Z$ is a disjoint union of linear $\P^{n-2}$'s and, if $r \ge 3$, $Z$ is disconnected by \cite[Cor.~1]{blv}.

Next, consider the cases $(X, \O_X(1), \E) \cong (\P^s \times \P^3, \O_{\P^s}(1) \boxtimes  \O_{\P^3}(1), \pi_2^*(\O_{\P^3}(s))^{\oplus r})$, $s \in \{1 ,2\}$. Note that $(X, \O_X(1), \E)$ is a linear Ulrich triple over $\P^3$ and, since $c_3(\E) \ne 0$, $Z$ is irreducible by \cite[Prop.~6.1 and Cor.~1]{blv}. It follows by \cite[Lemma 4.6]{blv} that $Z$ is a $\P^s$-bundle over a smooth irreducible curve $Z_B \subset \P^3$ defined by the $(r-2) \times (r-2)$'s minors of an $r \times (r-2)$ matrix of general linear forms.

To conclude, we can assume that $(X,\O_X(1),\E)$ is not a linear Ulrich triple over $B$ and $(X, \O_X(1), \E) \not\cong (\P^s \times \P^3, \O_{\P^s}(1) \boxtimes \O_{\P^3}(1), \pi_2^*(\O_{\P^3}(s))^{\oplus r})$, $s \in \{1 ,2\}$. 
Therefore, only case (a) remains.

It follows from the proof of Lemma \ref{pf} that $\det \E$ is $(n-4)$-ample, hence Lemma \ref{mapc} gives that $c_3(\E)=0$, so that $Z$ is disconnected when $r \ge 3$ by \cite[Cor.~1]{blv}, and there is a morphism with connected fibers $p : X \to C$ onto a smooth curve $C$. Moreover, there is a smooth irreducible divisor $Y \in |\det \E|$ with $Z \subset Y$ and a divisor $\Delta$ on $C$ such that if $\overline Z=p^* \Delta$, then $Z= \overline Z \cap Y$ and $Z$ is smooth. Hence $\Delta$ is reduced and therefore $\overline Z$ is a disjoint union of fibers of $p$. Now, for any fiber $\P^{n-1}$ of $\pi$, we have that the morphism $p_{|\P^{n-1}} : \P^{n-1} \to C$ must be constant, hence $p$ contracts all fibers of $\pi$ to a point and \cite[Lemma 1.15(b)]{de} gives a morphism $\psi : B \to C$ such that $p=\psi \circ \pi$. On the other hand, $p$ has connected fibers and therefore $\psi$ must be an isomorphim and we have that $p=\pi$. Since $Y \sim \xi + \pi^*M$ by Lemma \ref{pf}, $\overline Z=\pi^* \Delta$ and $Z= \overline Z \cap Y$, we get that $Z$ is a disjoint union of linear $\P^{n-2}$'s. 
\end{proof}
\renewcommand{\proofname}{Proof}

\begin{remark}
Let $X \subset \P^N$ be a smooth irreducible variety of dimension $n \ge 3$ and let $\E$ be a rank $r \ge 3$ Ulrich bundle on $X$ with $c_2(\E) \ne 0$ and $c_2(\E)^2=0$ and let $Z$ be a disconnected general Ulrich subvariety associated to $\E$. Even when $n \in \{3, 4\}$, in many cases, we can give the same description of $Z$ as the one in Corollary \ref{conn2}. In fact, if $c_1(\E)^3=0$, then $Z$ is a disjoint union of linear $\P^{n-2}$'s by Lemma \ref{c13} and \cite[Lemma 4.6]{blv}. Next, assume that $c_1(\E)^3 \ne 0$. Since $c_3(\E)=0$ by \cite[Prop.~6.1 and Cor.~1]{blv}, the proof of Lemma \ref{mapc} shows that there is a morphism with connected fibers $Y \to C$ onto a smooth curve, coming from to the linear system $|\O_Y(Z)|$. If $C$ is not rational, then, we can repeat the proof of \cite[Thm.~5.2.3]{bs}, using only the fact that $Y$ is smooth irreducible and $(n-3)$-big, because $\kappa(\det \E) \ge 3$. We get that the map $Y \to C$ extends to $X$ and we can reach the same conclusion on Ulrich subvarieties as the one in Corollary \ref{conn2}. 
\end{remark}

There is one case when we know that the curve $C$ in the above remark is rational: if $r=2$ and $H^1(\E^*)=0$, then \cite[(4.2)]{blv} implies that $h^0(\O_Y(Z))=2$ and the morphism is $Y \to \P H^0(\O_Y(Z))=\P^1$. This is exactly what happens in Example \ref{esbs} when $r=2$, since in the exact sequence
$$0 \to (\E')^*(-X) \to (\E')^* \to \E^* \to 0$$
we have that $H^1((\E')^*)=H^2((\E')^*(-X))=0$ by K\"unneth's formula, hence $H^1(\E^*)=0$.

\end{document}